\title{A drift approximation for parabolic PDEs with oblique boundary data}
\author{Damon Alexander and Inwon Kim\footnotemark[1]}
\newcommand{\norm}[1]{\|#1\|}
\newcommand{\der}[2]{\frac{\partial #1}{\partial #2}}
\newcommand{\R}{\mathbb{R}}
\newcommand{\e}{\epsilon}
\newcommand{\tikzAngleOfLine}{\tikz@AngleOfLine}
	\def\tikz@AngleOfLine(#1)(#2)#3{%
	\pgfmathanglebetweenpoints{%
	\pgfpointanchor{#1}{center}}{%
	\pgfpointanchor{#2}{center}}
	\pgfmathsetmacro{#3}{\pgfmathresult}%
}
\newtheorem{theorem}{Theorem}[section]
\newtheorem{lemma}{Lemma}[section]
\newtheorem{corollary}{Corollary}[section]
\newtheorem{remark}{Remark}[section]
\DeclareMathOperator{\tr}{tr}
\begin{document}
\maketitle
\begin{abstract}
We consider solutions of a quasi-linear parabolic PDE with zero oblique boundary data in a bounded domain. Our main result states that the solutions can be approximated by solutions of a PDE in the whole space with a penalizing drift term. The convergence is locally uniform and optimal error estimates are obtained. 
\end{abstract}
\section{Introduction}

\footnotetext[1]{UCLA Department of Mathematics.  {\em E-mail addresses:} dalexander@math.ucla.edu, ikim@math.ucla.edu.}

Consider the following parabolic problem with oblique boundary data:
$$
\left\{\begin{array}{lll}
u_t - F(D^2u, Du,u,x,t) = 0 &\hbox{ in }&\Omega\times (0,\infty);\\ \\
D u \cdot \vec{v}(x,t) = 0 &\hbox{ on }&\partial\Omega \times [0,\infty);\\ \\
u(x,0)=u_0(x)&\hbox{ in }& \Omega.
\end{array}\right.\leqno(P_g)
$$
Here $\Omega \subset \R^n$ is a bounded $C^2$ domain, $u_0\in C(\bar{\Omega})$, $\vec{v}\in\R^n$ is smooth, and $F$ is a quasi-linear operator with smooth coefficients given by 
\begin{equation}\label{quasi_linear}
F(D^2u,Du,u,x,t) = \sum_{i,j} q^{ij} (u,x,t) u_{x_ix_j}  + b(Du,u,x,t).
\end{equation}
We use $D$ and $\nabla$ interchangeably to denote the spacial gradient. We assume that $q^{ij}(z,x,t)$ satisfies a uniform ellipticity condition, that is, there exists constants $0<\lambda <\Lambda$ such that for all $(z,x,t) \in \R \times \R \times [0,\infty)$,
\begin{equation}\label{elliptic}
\lambda Id_{n\times n}\le (q^{ij}) \leq \Lambda Id_{n\times n}.
\end{equation}
For a given matrix $M$, we write $M^+$ and $M^-$ to denote its positive and negative parts, that is, $M =M^+-M^-$ with $M^+, M^- \geq 0$ and $M^+M^-=0$.  Using this notation, \eqref{elliptic} is equivalent to the condition 
$$
\mathcal{P}^{-}(M) \leq \sum q^{ij}(z,x,t)M_{ij} \leq \mathcal{P}^{+}(M),
$$
where $\mathcal{P}^{\pm}$ are the extremal Pucci operators defined by 
\begin{equation}\label{pucci}
\mathcal{P}^+(M) := \Lambda \tr( M^+) - \lambda \tr(M^-), \quad 
\mathcal{P}^-(M) := \lambda \tr(M^-) - \Lambda \tr (M^-).
\end{equation}
We also assume that $q^{ij}(z,x,t)$ and $b(p,z,x,t)$ are smooth and 
 \begin{equation}\label{Lipschitz}
 q^{ij}\hbox{ and } b \hbox{ are uniformly Lipschitz with respect to } p,z \hbox{ in } \mathbb{R}^n \times \mathbb{R}.
  \end{equation}

We assume that $\vec{v}(x,t)$ given in the boundary condition of $(P_g)$ is a smooth vector field which satisfies 
\begin{equation}\label{oblique}
\vec{v}(x,t)\cdot\vec \nu(x) \geq c_0,
\end{equation}
for some $c_0 > 0$, where $\vec \nu(x)$ denotes the outward normal vector of $\Omega$ at $x\in\partial\Omega$.

As we show in Appendix A, for a given $\vec{v}(x,t)$ satisfying \eqref{oblique} by possibly adjusting the size of $\lambda$ and $\Lambda$, one can always find a symmetric matrix $A(x,t)$ defined on $\R^n \times [0,\infty)$ that is smooth, satisfies \eqref{elliptic} and  
$$
\vec{v}(x,t) = A(x,t)\cdot\vec \nu(x) \hbox{ on }\partial\Omega.
$$ 
With this representation of $\vec{v}$ using $A$, our goal is to approximate the above problem by introducing a penalizing drift. First let us discontinuously extend $F$ onto all of $\R^n$ by taking 
\begin{align*}
	F(D^2v, Dv, v, x, t) = \left\{
	     \begin{array}{lr}
			F(D^2v, Dv, v, x, t) &\mbox{ if } x \in \Omega \\ \\
			\nabla\cdot ( A(x,t) \nabla v)	& \mbox{ if } x \in \Omega^c.
			 \end{array}
		\right.
\end{align*}
Now consider
$$
\left\{\begin{array}{lll}
v_t - F(D^2v, Dv,v,x,t) - N\nabla \cdot [ v A(x,t) \nabla \Phi] = 0 &\hbox{ in }\R^n\times (0,\infty);\\ \\
v(x,0)=v_0(x) &\hbox{ in } \R^n.
\end{array}\right.\leqno (P_N)
$$
Here $v_0$ is an extension of $u_0$ to $\R^n$ to be defined in \eqref{eqn:v0}. $\Phi$ is a potential whose gradient is zero inside of $\Omega$ and is proportional to the inward normal of $\Omega$ outside of $\Omega$. More precisely, we start with $d(x,\Omega)$ which is $C^2$ provided $x$ is in an outer ball of $\Omega$, and we consider a smooth extension $d(x)$ onto all of $\R^n$ that goes to infinity as $|x| \to \infty$. Then we write
\begin{equation}\label{potential}
\Phi(x) := d(x)^3
\end{equation}
See Theorem~\ref{KK}(a) for the well-posedness of $(P_N)$ with the discontinuous operator $F$. Alternatively one can consider a regularized version of $F$ (see section 4).
\medskip

The approximating problem $(P_N)$ can be viewed in the framework of stochastic particles, where the added drift represents an external force that pushes back the particles which diffused out of the domain $\Omega$. In the context of stochastic differential equations, relevant results have been established in the classical paper of Lions and Snitzman \cite{LandS}, where a similar method of introducing a drift term was used to derive existence of solutions to the Skorokhod problem. 
 
  \medskip
 
  Showing the validity of this approximation is the goal of our paper. Our main result is the following:
 
\begin{theorem}\label{main:thm}
Let $u$ and $v$ respectively solve $(P_g)$ and $(P_N)$ as given above, and let $v_0$ be given by \eqref{eqn:v0}. Then for any $T>0$, $v$ uniformly converges to $u$ in $\bar{\Omega}\times [0,T]$ as $N\to\infty$. Moreover we have
\begin{equation}\label{result}
|v(x,t)-u(x,t)| \leq C N^{-1/3} \hbox{ in } \bar{\Omega}\times [0,T],
\end{equation} 
where $C$ depends only on $n, \lambda, \Lambda, T$ and the regularity of the coefficients and the domain $\Omega$.
\end{theorem}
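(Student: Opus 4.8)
The plan is to treat $(P_N)$ as a perturbation of $(P_g)$ and close the argument with the comparison principle for viscosity solutions, after constructing suitable barriers near $\partial\Omega$. I would proceed in three stages. First, I would establish uniform bounds on the family $\{v^N\}$: an $L^\infty$ bound (from the maximum principle, using that the drift term $N\nabla\cdot[vA\nabla\Phi]$ points inward and hence cannot create new maxima outside $\bar\Omega$), and then a bound showing $v^N$ is essentially supported near $\bar\Omega$ — more precisely, that $v^N$ decays rapidly outside $\Omega$ as $N\to\infty$. The key computation here is to test the equation against the potential $\Phi=d^3$ (or use $\Phi$ directly as a Lyapunov-type barrier): since $\nabla\Phi = 3d^2\nabla d$ is proportional to the inward normal direction reflected, the drift acts as a strong restoring force of strength $\sim N d^2$ in the region $\{d>0\}$, which forces the "mass" of $v^N$ that escapes $\Omega$ to be controlled by a negative power of $N$. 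I expect the correct scaling $N^{-1/3}$ to emerge from balancing the cubic vanishing of $\Phi$ at $\partial\Omega$ against the factor $N$: a boundary layer of width $\sim N^{-1/3}$ is where the drift $N d^2$ becomes comparable to the $O(1)$ diffusion and transport terms.

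Second, I would pass to the limit. Using the uniform $L^\infty$ bound together with equicontinuity (interior parabolic estimates for the uniformly elliptic quasilinear operator $F$, plus the boundary-layer control from stage one), I would extract via Arzelà–Ascoli a locally uniform limit $v^\infty$ along a subsequence. The heart of this stage is identifying $v^\infty$: inside $\Omega$ it solves $u_t - F(D^2u,Du,u,x,t)=0$ by stability of viscosity solutions, and the singular drift term, concentrated in the vanishing boundary layer, should in the limit enforce exactly the oblique condition $Du\cdot\vec v = Du\cdot(A\vec\nu)=0$ on $\partial\Omega$. To see this I would use test functions: if $v^\infty - \phi$ has a maximum at a boundary point $x_0$ with $D\phi(x_0)\cdot\vec v(x_0) < 0$, I would derive a contradiction by constructing a local supersolution of $(P_N)$ that sits below $v^N$ — here is where $A(x,t)\vec\nu(x) = \vec v(x,t)$ on $\partial\Omega$ (from Appendix A) is used to align the penalizing drift direction $A\nabla\Phi$ with the prescribed oblique vector. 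By uniqueness for $(P_g)$, the full family converges, giving the qualitative statement.

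Third, and this is where I expect the main difficulty, I would upgrade qualitative convergence to the rate $CN^{-1/3}$. The strategy is to build an explicit sup/sub-solution pair for $(P_N)$ of the form $u(x,t) \pm \big(C_1 N^{-1/3} + C_2\, \psi_N(x)\big)$, where $u$ is first extended off $\bar\Omega$ in a controlled way (its normal derivative is only $O(1)$, not zero, off the boundary, which is exactly the defect the drift must absorb) and $\psi_N$ is a boundary-layer corrector, vanishing inside $\Omega$ and engineered so that $N A\nabla\Phi\cdot\nabla\psi_N$ dominates the error $F$-terms and the mismatch $Du\cdot A\nabla\Phi \ne 0$ produced by the extension of $u$. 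The algebra of choosing $\psi_N$ — likely something like $\psi_N(x) = \min(1, N^{2/3} d(x))$ smoothed out, or a solution of a one-dimensional boundary-layer ODE in the variable $s = N^{1/3} d$ — has to be done so that all residual terms are $O(N^{-1/3})$; the cubic power in $\Phi$, the quasilinearity of $F$ (handled via the Lipschitz hypothesis \eqref{Lipschitz} and ellipticity \eqref{elliptic}), and the smoothness/regularity of $d$ near $\partial\Omega$ all feed into this. Once the pair is in place, Theorem~\ref{KK}(a)'s comparison principle for $(P_N)$ (applied to the discontinuous $F$, or to its regularization as in Section 4 followed by a limit) yields \eqref{result}, with $C$ depending only on the advertised quantities. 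The obstacle is genuinely the construction of $\psi_N$: getting the matching at $\partial\Omega$ right while keeping $\psi_N$ an admissible (super- or sub-) test function across the interface $\{d=0\}$, where $F$ itself is discontinuous, is the delicate point.
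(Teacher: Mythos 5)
Your high-level intuition is on the right track --- a boundary layer of width $\sim N^{-1/3}$ emerges from balancing the strength of the drift $\sim N d^2$ against the $O(1)$ diffusion, and barriers plus comparison is indeed the paper's engine --- but there is a genuine structural gap in Stage~3 that I do not think can be patched without a different ansatz.

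The proposed barrier $u \pm \bigl(C_1 N^{-1/3} + C_2\,\psi_N\bigr)$ is an \emph{additive} perturbation of an extension of $u$. Plug such a $w$ into the $(P_N)$ operator and look at the penalizing term: $N\nabla\cdot[w\,A\nabla\Phi]$ expands into a first-order transport piece $N\,\nabla w^T A\nabla\Phi$ \emph{and} a zeroth-order piece $N\,w\,\nabla\cdot(A\nabla\Phi)$. The extension of $u$ is $O(1)$ near $\partial\Omega$, and $\nabla\cdot(A\nabla\Phi)$ grows linearly in $d$, so the zeroth-order term is $O(N d)$ --- large, of one sign tied to the sign of $u$ rather than to the super/sub character you want, and not absorbed by any $\nabla\psi_N$ term you could engineer. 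This is exactly the obstruction the paper flags in the introduction (``the zeroth order term $Nv\nabla\cdot(A\nabla\Phi)$ causes a problem'') and resolves with the \emph{multiplicative} ansatz $\varphi = f\,e^{-N\Phi}$ of \eqref{decomposition}/\eqref{decomp_2}: upon substitution, the $N^2 f\,\nabla\Phi^T A\nabla\Phi$ terms cancel identically, and the $N f\,\nabla\cdot(A\nabla\Phi)$ term cancels identically, leaving only $e^{-N\Phi}\bigl(f_t - \mathcal{M}f\bigr)$ where $\mathcal{M}$ contains just a single benign first-order term $N\nabla f^T A\nabla\Phi$. That cancellation is the load-bearing computation, and an additive corrector $\psi_N$, however cleverly chosen (smoothed $\min(1,N^{2/3}d)$, solution of a 1D layer ODE, etc.), does not reproduce it.

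Two further gaps. First, your Stage~2 (Arzel\`a--Ascoli plus viscosity stability to identify the limit) is not how the paper gets the rate and in fact is never needed: the rate follows directly from the barrier pair. Moreover the equicontinuity you would need is nontrivial across the $N$-dependent boundary layer where the operator $F$ is discontinuously extended. Second, and more seriously for the quasilinear case: your proposal has no analog of the interpolated operator $F_r$ of \eqref{interpolation} and the uniform-in-$r$ regularity of Lemma~\ref{lem:reg} (the $C^{1,\alpha}$, $w_t\in L^\infty$, and boundary $C^{1,1}$ bounds, built on Kim--Krylov). The $e^{-N\Phi}f$ barrier argument only closes when $F$ \emph{matches} $\nabla\cdot(A\nabla\cdot)$ near $\partial\Omega$; the paper therefore introduces $(P_r)$ and $(P_{r,N})$, proves the barrier estimate there uniformly in $r$ (Theorem~\ref{thm_1}), and then sends $r\to 0$ via Lemma~\ref{lem_1} and Lemma~\ref{lem_3}. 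Without that approximation scheme and the uniform regularity underpinning it, there is no route from the divergence-form case to general quasilinear $F$, which is precisely why the paper remarks that extending beyond \eqref{quasi_linear} remains open.
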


 While $(P_N)$ is a natural approximation of the original problem $(P_g)$, the convergence result does not appear to be previously proven, even for the case of the heat equation with Neumann boundary data.  Let us briefly discuss the main challenges in the analysis.

\medskip

{\bf Remarks}

1. It is not clear to us whether the above theorem holds with the original $F$ in $(P_N)$ without extending it to have a diffusion term that corresponds to the boundary conditions given $A(x,t)$ outside of $\Omega$. For our analysis this extension was necessary for the rather technical reason of constructing an appropriate barrier of the form  $e^{-N\Phi} f$, based on the stationary solution of the divergence-form equation outside of $\Omega$.

\medskip

2. The rate in \eqref{result} is optimal in some sense for our choice of $\Phi$ in \eqref{potential}, which we show in Section 5.1. $\Phi$ is chosen to have cubic growth for the technical reason that  $\Phi$ then is $C^2$ across $\partial\Omega$. See Theorem~\ref{thm:non-normal} for a result on different choices of potentials.

\medskip

3. The result is limited to  quasi-linear PDEs of the form \eqref{quasi_linear}. This is due to the nature of our argument, which is based on approximating $(P_g)$ by switching the operator $F$ near the boundary of $\Omega$ with  the diffusion operator associated with $A(x,t)$, as explained in the outline of the paper below. To guarantee stability of such an approximation we need uniform regularity of the approximate solutions.  This corresponds to the regularity of parabolic PDEs with leading coefficients discontinuous in one variable; see Theorem~\ref{KK}. It remains open whether the theorem holds for general nonlinear operators that go beyond \eqref{quasi_linear}.

\bigskip
 
$\circ$ {\it Heuristics and difficulties}

\medskip

 For the elliptic case,  arguments from the standard viscosity solution theory were applied in \cite{Barles} to show that the solution of 
\begin{equation}\label{elliptic_approx}
- F(D^2v, Dv,v,x) + N\nabla v \cdot (A(x) \nabla \Phi)= 0 \hbox{ in } \R^n
\end{equation}
uniformly converges to the stationary version of $(P_g)$ for nonlinear, uniformly elliptic $F$. Heuristically, this result can be justified by observing that $N\nabla\Phi$ approximates a singular measure concentrated on the boundary of $\Omega$ with the normal direction, thus leading to the boundary condition $\nabla v \cdot A(x)\vec \nu =0$.  Note that the drift term here has a plus sign in front of them instead of minus in $(P_N)$, which seems to be necessary to carry out the maximum principle-type arguments. For parabolic problems such drift term, pushing density out of the domain along the normal direction, will result in solutions converging to zero in time, which quickly rules out the any direct use of the maximum principle. Moreover, even with the minus sign in front of the drift term, for the parabolic problem the above approximation fails due to additional challenges created by the time variable.  For example, we show in Theorem~\ref{decay} that for $F = \Delta u$, replacing the divergence-form drift term in $(P_N)$ by the non-divergence drift term in \eqref{elliptic_approx} causes the solution to converge to zero over time as $N\to\infty$. On the other hand, the zeroth order term $Nv\nabla\cdot(A(x,t)\nabla\Phi)$ in $(P_N)$ causes a problem in the above heuristics to yield the oblique boundary condition in the limit $N\to\infty$. Thus one concludes that there is a delicate balance between the two terms coming out of the penalizing drift term in $(P_N)$, which must be handled carefully. The main observation that enables our analysis is that the solution of $(P_N)$ outside of $\Omega$ can be bounded by the quickly vanishing barriers of the form $e^{-N\Phi}f$, where $f$ is a smooth function. Our actual argument is built on estimates for the barriers (see section 2.1.1) and does not involve direct estimates on $v$, which suffices for our convergence result, but further asymptotic analysis on $v$ may reveal information on the dynamics of the penalizing drift leading to the boundary condition in $(P_g)$.



\bigskip

$\circ$ {\it Outline of the paper}

\medskip

Due to the difficulties described above, we were not able to produce a direct argument to show Theorem~\ref{main:thm}. Instead, we show the theorem first for linear operators where the diffusion matches the boundary flux condition in Section ~\ref{sec:linear}, and then build on these results to address the general case in Section~\ref{sec:general}. 

\medskip

 The general idea in Section \ref{sec:linear} is to use the comparison principle, by testing against barriers created by extending a particular perturbation of the true solution for $(P_g)$. To illustrate the construction of barriers done in Section~\ref{sec:linear}, we first carry out the argument in one dimension in Section~\ref{sec:1d}, in the special case where $F = \Delta u$ and $A = 1$.  Then we proceed to the more general linear case in higher dimensions in Section~\ref{sec:2d}, still in the case where $F$ and $A$ correspond. One important ingredient in the proof is a decomposition argument which eliminates the zeroth order term in the penalizing drift in $(P_N)$, as shown in \eqref{decomposition} and \eqref{decomp_2}.  In Section~\ref{sec:numerics} we will show results from basic numerical experiments which verify the rate of convergence for the heat equation in one dimension.

\medskip

In section~\ref{sec:general}, we introduce an additional approximation to let us utilize the results of the previous section to show the main theorem. Roughly speaking, we will interpolate the diffusion term of $F$ in $(P_g)$ with the one matching the boundary condition near $\partial\Omega$; see $(P_r)$ in Section~\ref{sec:general}. We then consider approximating the modified problem with the penalizing drift term. The important estimate in this section is the uniform rate of convergence between $(P_r)$ and its penalizing approximation $(P_{r,N})$ which is independent of $r$ (see Theorem~\ref{thm_1}), based on the uniform regularity of solutions of $(P_r)$ (see Lemma~\ref{lem:reg}). The uniform regularity estimate for $(P_r)$ draws from the result of Kim and Krylov \cite{KK}, and is of independent interest. We finish with remarks and examples in Section~\ref{sec:remarks}.

\medskip

{\bf Acknowledgements:} 
We thank Jose Carrillo for the interesting discussions which prompted our investigation. We also thank Alexis Vasseur for insightful comments regarding the heuristics above. Both authors are partially supported by NSF DMS 1300445.

\section{PDEs of divergence form}\label{sec:linear}
We first consider the case when $F$ is in linear, in divergence form and matches the co-normal boundary condition, in the following way:  
$$\label{P}
\left\{
     \begin{array}{lll}
	u_t - \nabla \cdot (A(x,t) \nabla u) = 0 &\mbox{in}& \Omega \times (0,\infty); \\
	\nabla u^T A(x,t) \vec \nu = 0 &\mbox{on}& \partial \Omega \times (0,\infty); \\
	u(x,0) = u_0(x) &\mbox{in} &\Omega.
		 \end{array}
	\right.\leqno{(D)}
$$
 For simplicity we rescale so that $\lambda = 1$, so that 
\begin{align} \label{eqn:strictell}
\tag{A1}
	Id_{n\times n} \leq A(x,t) \leq \Lambda Id_{n\times n} \mbox{ for all } x \in \R^n, t \ge 0.
\end{align}
In this case, the approximating problem is written as
$$
\left\{
     \begin{array}{lll}
	v_t - \nabla \cdot [A(x,t) \nabla v] - N\nabla \cdot[  v A(x,t) \nabla \Phi] = 0 &\mbox{in}& \R^n \times (0,\infty); \\
	v(x,0) = v_0(x) &\mbox{on}& \R^n.
		 \end{array}
	\right.\leqno{(D_N)}
$$
Here $\Phi(x)$ is defined in \eqref{potential}, and  $v_0$ is an extension of $u_0$ onto $\R^n$ which will be defined in more detail in Section~\ref{sec:2d}.  We will prove
\begin{theorem}\label{thm:neumannapprox2dthm}
Suppose $\Omega$ is $C^2$ and that $A$ is $C^2$, symmetric, and satisfies \eqref{eqn:strictell}.  Then if $u$ solves $(D)$ and $v$ solves $(D_N)$ with initial data $v_0$ given in \eqref{eqn:v0}, we have that
\begin{align*}
	\norm{u - v}_{L^{\infty}(\Omega \times [0,T])} < C(u_0, \Omega, A)T N^{-1/3}.
\end{align*}
\end{theorem}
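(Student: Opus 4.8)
The plan is to prove the two-sided estimate by the comparison principle for $(D_N)$: it suffices to construct a supersolution $\Psi^+$ and a subsolution $\Psi^-$ of $(D_N)$ on $\R^n\times[0,T]$ with $\Psi^-\le u\le\Psi^+$ on $\bar\Omega\times[0,T]$, ordered against $v_0$ at $t=0$, and $\norm{\Psi^\pm-u}_{L^\infty(\bar\Omega\times[0,T])}\le CTN^{-1/3}$; since $(D)$ and $(D_N)$ are linear, the construction of $\Psi^-$ mirrors that of $\Psi^+$, so I describe only the latter. First I would remove the zeroth order term $Nv\,\nabla\cdot(A\nabla\Phi)$ from the penalizing drift through the change of variables $v=e^{-N\Phi}w$ (the decomposition \eqref{decomposition}): using $A\nabla v+NvA\nabla\Phi=e^{-N\Phi}A\nabla w$ one sees that $(D_N)$ is equivalent to
\begin{equation*}
w_t-\nabla\cdot(A\nabla w)+N\,(A\nabla\Phi)\cdot\nabla w=0\quad\text{in }\R^n\times(0,\infty),
\end{equation*}
with $w(\cdot,0)=e^{N\Phi}v_0$, which by the choice \eqref{eqn:v0} is a bounded smooth extension of $u_0$. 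Since $\Phi\equiv0$ on $\Omega$ by \eqref{potential} we have $w\equiv v$ on $\Omega$, so it is enough to find $\Psi^+$ with $\Psi^+_t-\nabla\cdot(A\nabla\Psi^+)+N(A\nabla\Phi)\cdot\nabla\Psi^+\ge0$ on $\R^n\times(0,\infty)$, $\Psi^+(\cdot,0)\ge e^{N\Phi}v_0$, and $\Psi^+\le u+CTN^{-1/3}$ on $\bar\Omega\times[0,T]$.

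For the barrier I would extend a small perturbation of $u$. Inside $\Omega$ take $\Psi^+=u+\beta\rho+\beta Mt$, where $\beta\sim N^{-1/3}$, $\rho$ is a fixed smooth function with $(A\vec\nu)\cdot\nabla\rho\equiv1$ on $\partial\Omega$, and $M$ bounds $|\partial_t\rho-\nabla\cdot(A\nabla\rho)|$; then $\Psi^+$ is a supersolution there (the drift vanishes on $\Omega$), its conormal derivative on $\partial\Omega$ equals $\beta>0$ while that of $u$ vanishes, and $\Psi^+-u\le C(1+T)N^{-1/3}$ on $\bar\Omega$. Outside $\Omega$, in the collar where the distance $d$ is $C^2$, I would extend $\Psi^+$ by a smooth function matched to the interior barrier across $\partial\Omega$ — with a kink, if any, in the direction compatible with the supersolution inequality — of the form (the interior value and conormal slope on $\partial\Omega$) plus a $\beta$-scaled profile $\varphi(d)$ plus lower order geometric corrections. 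Applying the operator and using $A\nabla\Phi=3d^2A\nabla d$ together with $(A\nabla d)\cdot\nabla u=0$ on $\partial\Omega$, every term is $O(1)$ throughout the collar; the content of the construction is to choose $\varphi$ (an ODE barrier, concave near $d=0$) and the constant $\beta$ so that the positive contributions $-\beta a\varphi''+3Nd^2\beta a\varphi'+3Nd^2\beta\,(A\nabla d)\cdot\nabla\rho$, with $a:=(\nabla d)^TA\nabla d\ge1$, dominate the bounded source $|\partial_t u-\nabla\cdot(A\nabla u)|$ and the curvature terms in the exterior. Balancing the degeneracy $3Nd^2$ against these $O(1)$ quantities is exactly what pins the collar width and the perturbation size at $N^{-1/3}$ (where $Nd^3\sim1$) and produces the stated rate; beyond the collar the factor $e^{-N\Phi}=e^{-Nd^3}$ makes $\Psi^+$ negligible and the far field is closed off by a crude exterior barrier. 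I would carry this out first in one space dimension with $F=\Delta u$, $A=1$, $d=\mathrm{dist}(\cdot,\Omega)$, where $\rho$ is linear and $\varphi$ is explicit, to fix the structure, and then repeat in general dimension, where $d$ is $C^2$ in the collar because $\Omega$ is $C^2$ with an exterior ball condition, $\nabla d=\vec\nu$ on $\partial\Omega$, and the curvature of $\partial\Omega$ and the variation of $A$ contribute only $O(1)$ (uniform in $N$) errors, absorbed into $M$.

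Granting these barriers, comparison for the $w$-equation on $\R^n\times[0,T]$ gives $w\le\Psi^+$ (and $w\ge\Psi^-$); restricting to $\bar\Omega$, where $w=v$, gives the claimed estimate. The step I expect to be the main obstacle is the exterior barrier in the degenerate layer $0<d\lesssim N^{-1/3}$: there one must simultaneously keep the extended function a supersolution of the $w$-equation despite the drift coefficient $3Nd^2$ vanishing at $\partial\Omega$, match it to the interior barrier across $\partial\Omega$ with a kink compatible with the supersolution inequality, keep it bounded so that $e^{-N\Phi}$ times it dominates the initial data, and keep the perturbation of size $N^{-1/3}$ so that the error is optimal — it is the tension among these four requirements that singles out the cubic potential \eqref{potential} and the exponent $1/3$. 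A secondary technical point is the comparison principle for $(D_N)$, equivalently the $w$-equation, on the whole space with the polynomially growing but confining drift $NA\nabla\Phi$; this holds in the class of bounded solutions by standard parabolic theory, and the barriers, decaying like $e^{-Nd^3}$, lie in that class.
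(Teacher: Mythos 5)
Your proposal takes essentially the same route as the paper: the paper's ansatz $\varphi = f e^{-N\Phi}$ in \eqref{decomp_2} is exactly your change of variables $v = e^{-N\Phi}w$ (reducing to $f_t - \nabla\cdot(A\nabla f) + N(A\nabla\Phi)\cdot\nabla f \ge 0$), the perturbation $u_\epsilon$ in \eqref{approximation} plays the role of your $u + \beta\rho + \beta M t$ with $\beta\sim\epsilon\sim N^{-1/3}$, the exterior profile is the paper's
$f(x,t) = u_\epsilon(S(x,t),t) + \alpha\,\epsilon^{-1}\big[(d(x)-\epsilon)^3+\epsilon^3\big] + \alpha\epsilon\, d(x)$,
and the balance $N\epsilon^3\sim 1$ giving the $N^{-1/3}$ rate is the same. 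The one technical ingredient your sketch does not isolate is how to transport the boundary values of $u_\epsilon$ into $\Omega'\setminus\Omega$ compatibly with the drift: the paper sends $x$ to $S(x,t)=x-\tilde d(x,t)A(x,t)\nabla d(x)$ and proves $|DS\,A\nabla d|\lesssim d$ (Lemma~\ref{lemma:Slemma}), which is what keeps the potentially unbounded contribution $N\,\nabla u_\epsilon^T DS\,A\nabla\Phi = O(Nd^3)$ from overwhelming the positive part of the profile across the whole fixed collar $\Omega'\setminus\Omega$, not just the thin layer $d\lesssim N^{-1/3}$.
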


\subsection{The heat equation in one dimension}\label{sec:1d}
Before handling the problem in multiple dimensions, we illustrate the proof technique on a simpler example, the one dimensional heat equation with Neumann data:
$$\label{eqn:1dheat}
\left\{
     \begin{array}{ll}
	u_t = u_{xx} &\mbox{in } (a, b) \times [0,\infty); \\
	u_x(a,t) = u_x(b,t) = 0 &\mbox{for all } t > 0;  \\
	u(x,0) = u_0(x) &\mbox{for all } x \in [a,b] . 
		 \end{array}
	\right.\leqno{(H)}
$$
We define the approximating problem
$$
\left\{
     \begin{array}{ll}
	v_t = v_{xx} +N v_x \Phi_x + N v \Phi_{xx} &\mbox{in } \R \times (0,\infty); \\
	v(x,0) = v_0(x) &\mbox{for all } x \in \R.
		 \end{array}
		\right.\leqno{(H_N)}
$$
Here $v_0$ is defined as
\begin{equation}\label{initial:H}
	v_0(x) := \left\{
	     \begin{array}{lr}
			u_0(x) & \mbox{ if }  x \in [a,b]\\ 
			u_0(b)e^{-N \Phi(x)} & \mbox{ if } x > b \\
			u_0(a)e^{-N \Phi(x)} & \mbox{ if } x < a,
			 \end{array}
			 \right.
\end{equation}
and $\Phi$ is defined as follows:
\[\Phi(x) := \left\{
     \begin{array}{lr}
	 |x-a|^3 & \mbox{ if } x \le a \\
	0		& \mbox{ if } a < x < b \\
		|x-b|^3 & \mbox{ if } x \ge b.
		\end{array}
	\right.\]
In words, $\Phi$ grows cubically outside the original region, which makes it $C^2$ at the boundary. 

\begin{theorem} \label{thm:heat}
Assume $u_0\in C([a,b])$, and let $u$ and $v$ solve $(H)$ and $(H_N)$ respectively with initial data $u_0$ and $v_0$. Then for any $T>0$, $v$ uniformly converges to $u$ in $[a,b]\times [0,T]$ as $N\to\infty$. In particular we have that for all $T>0$,
\begin{align*}
	\norm{u - v}_{L^\infty([a,b] \times [0,T])}  < C(u_0, a, b) (T+1) N^{-1/3}.
\end{align*}
\end{theorem}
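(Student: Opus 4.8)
The plan is a comparison argument, preceded by two reductions. First, by mollifying $u_0$ we may assume $u_0\in C^2([a,b])$ with $u_0'(a)=u_0'(b)=0$: $(H)$ is an $L^\infty$-contraction in its data, and after the substitution below the $L^\infty$ dependence of $v$ on the (extended) initial data is also $1$-Lipschitz, so the extra error can be sent to $0$ at the end. Second --- this is the decomposition that removes the zeroth-order drift --- set $w:=e^{N\Phi}v$. Then
\[
w_t - w_{xx} + N\Phi_x w_x = 0 \ \text{ in } \R\times(0,\infty),\qquad w(\cdot,0)=e^{N\Phi}v_0,
\]
which has no zeroth-order term, so its comparison principle for bounded functions is standard (behaviour at infinity is controlled by the supersolution $\e(1+\Phi+t)$, and $w$ is bounded by $\|u_0\|_\infty$ by comparison with the constant, a supersolution of the $w$-equation). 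Since $\Phi\equiv0$ on $[a,b]$ we have $w=v$ there, and $e^{N\Phi}v_0$ is the continuous extension of $u_0$ by the constants $u_0(a),u_0(b)$; it thus suffices to prove $\|w-u\|_{L^\infty([a,b]\times[0,T])}\le C(u_0,a,b)(1+T)N^{-1/3}$.

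For the upper bound I build a supersolution $\overline w$ of the $w$-equation by extending a mild perturbation of $u$. Let $M_0:=\sup_{[0,T]}|u_{xx}(b,\cdot)|$, a fixed a priori constant, and let $c\ge0$ solve
\[
c_t - c_{xx} + N\Phi_x c_x = M_0 \ \text{ in }\{x>b\}\times(0,T],\qquad c(b,\cdot)=0,\ \ c(\cdot,0)=0.
\]
Differentiating in $t$, $c_t$ solves the homogeneous equation with data $c_t(b,\cdot)=0$, $c_t(\cdot,0)=M_0>0$, so $c_t\ge0$ and hence $\mu(t):=c_x(b^+,t)\ge0$ is \emph{nondecreasing} with $\mu(0)=0$. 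On $[a,b]$ set $\overline w:=\bar u$, the heat solution with initial data $u_0$ and Neumann data $\bar u_x(b,\cdot)=\mu$, $\bar u_x(a,\cdot)=-\mu$; a Hopf-lemma argument gives $\bar u\ge u$, and --- crucially --- the maximum principle applied to $(\bar u-u)_t$, which solves the heat equation with Neumann data $\pm\mu'$ (sign $\ge0$ at $b$, $\le0$ at $a$) and zero initial data, gives $(\bar u-u)_t\ge0$, so $\bar u_{xx}(b,\cdot)=(\bar u-u)_t(b,\cdot)+u_{xx}(b,\cdot)\ge u_{xx}(b,\cdot)\ge-M_0$. For $x>b$ put $\overline w(x,t):=\bar u(b,t)+c(x,t)$ (symmetrically for $x<a$). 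Using $\Phi=\Phi_x=\Phi_{xx}=0$ at $\{a,b\}$, one checks that $\overline w$ matches $\bar u$ to first order across $\{a,b\}$ (value $\bar u(b,t)$, slope $\mu(t)=c_x(b^+,t)$), that $\overline w(\cdot,0)=e^{N\Phi}v_0=w(\cdot,0)$, and that on $\{x>b\}$
\[
\overline w_t - \overline w_{xx} + N\Phi_x\overline w_x = \bar u_{xx}(b,t) + M_0 \ge 0 \quad\text{for all }t>0 .
\]
Together with the one-sided check at the two $C^1$-corners (automatic, since $C^1$-matching and the interior branch bound the test function's second derivative), $\overline w$ is a viscosity supersolution on all of $\R$. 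The point is that forcing $c$ with the fixed constant $M_0$, combined with the monotonicity $\mu'\ge0$, makes the displayed inequality hold for \emph{every} $t>0$, so no separate initial-time correction is needed.

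The rate comes from the size of $\mu$, estimated via the parabolic rescaling $y=N^{1/3}(x-b)$, $s=N^{2/3}t$: one finds $c(b+N^{-1/3}y,N^{-2/3}s)=M_0N^{-2/3}\hat c_1(y,s)$, where $\hat c_1$ solves the $N$-free problem $\partial_s\hat c_1-\partial_{yy}\hat c_1+3y^2\partial_y\hat c_1=1$ on $\{y>0\}$ with zero data, and whose bounded steady state $\hat c_1^\infty$ (explicitly $\partial_y\hat c_1^\infty(y)=e^{y^3}\int_y^\infty e^{-r^3}\,dr$) dominates it. Hence $c=O(M_0N^{-2/3})$ and $\mu(t)=N^{1/3}M_0N^{-2/3}\,\partial_y\hat c_1(0^+,N^{2/3}t)\le M_0\,\partial_y\hat c_1^\infty(0^+)\,N^{-1/3}$, so $\|\mu\|_\infty=O(N^{-1/3})$. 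Comparing $\bar u-u$ with an explicit function quadratic in $x$ and linear in $t$ gives $0\le\bar u-u\le C(a,b)(1+T)\|\mu\|_\infty\le C(u_0,a,b)(1+T)N^{-1/3}$. The comparison principle for the $w$-equation then yields $v=w\le\overline w=\bar u\le u+C(u_0,a,b)(1+T)N^{-1/3}$ on $[a,b]$; a symmetric subsolution (take $\bar u$ with Neumann data $-\mu$, and $\bar u(b,t)-c(x,t)$ for $x>b$, using $(\underline u-u)_t\le0$ to get $\underline u_{xx}(b,\cdot)\le M_0$) gives the matching lower bound, and undoing the mollification finishes the proof.

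The main obstacle, I expect, is identifying the correct barrier. The naive ansatz $f(t)e^{-N\Phi}$ is a supersolution only when $\partial_t f\ge0$; a perturbation acting through the boundary flux on $[a,b]$ alone cannot be extended to a global supersolution; and any cut-off that truncates the $O(1)$-sized value $\bar u(b,t)$ inside the $N^{-1/3}$-wide boundary layer necessarily produces $O(1)$ errors. The resolution --- absorbing the curvature $\bar u_{xx}(b,t)$ into a corrector $c$ solving the \emph{same} drift equation (which is zeroth-order-free precisely because of the decomposition) with the fixed constant forcing $M_0$, and exploiting that the induced boundary flux $\mu$ is automatically nonnegative and nondecreasing --- is what makes the construction go through; the parabolic rescaling then forces the exponent $1/3$, and with it the optimality of the rate, out of the cubic growth of $\Phi$.
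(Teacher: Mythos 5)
Your proof is correct and takes a genuinely different route from the paper, although you share the key preliminary observation (the substitution $w=e^{N\Phi}v$, which is the same change of variables as the paper's ansatz $\varphi = f e^{-N\Phi}$, and which kills the zeroth-order term). Where the paper perturbs $u$ by an explicit quadratic to $u_\epsilon$ with boundary slope $\sim\epsilon$, extends it outward by the \emph{hand-built polynomial barrier} $f(x,t)=u_\epsilon(0,t)+\alpha\frac{(x-\epsilon)^3+\epsilon^3}{\epsilon}+\alpha\epsilon x$, and verifies $f_t-f_{xx}+Nf_x\Phi_x>0$ by a case split in $x$ followed by the choice $N\sim\epsilon^{-3}$, you instead define the exterior extension \emph{implicitly} as $\bar u(b,t)+c(x,t)$, where $c$ is the solution of the drift equation on $\{x>b\}$ with constant forcing $M_0$ and zero side conditions, and let the boundary flux $\mu=c_x(b^+,\cdot)$ be whatever $c$ produces. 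The supersolution inequality on $\{x>b\}$ then reduces to $\bar u_{xx}(b,t)+M_0\ge0$, which you obtain from the maximum-principle monotonicities $c_t\ge0$, $\mu'\ge0$, $(\bar u-u)_t\ge0$ — a structural argument rather than an explicit computation. Finally, the rate $N^{-1/3}$ is extracted by the parabolic rescaling $y=N^{1/3}(x-b)$, $s=N^{2/3}t$, which turns $c$ into a solution of the $N$-free problem $\partial_s\hat c-\partial_{yy}\hat c+3y^2\partial_y\hat c=1$, dominated by its bounded steady state; this makes the origin of the exponent $1/3$ transparent (it is forced by the cubic growth of $\Phi$ through the scaling, not by a choice of parameter), which the paper's argument obtains less directly by balancing terms in the polynomial barrier. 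What the paper's approach buys is full explicitness and a construction that adapts almost verbatim to the divergence-form setting in higher dimensions (where the role of $x-b$ is played by the distance function and the map $S$); your approach is cleaner in one dimension and more clearly exhibits the scaling origin of the rate, but the corrector PDE and the Hopf-type monotonicity chain would need to be re-examined in the multi-dimensional setting. Two small points worth noting in your write-up: the bound $M_0<\infty$ uses the compatible $C^2$ data coming from the mollification step, and the $C^1$-matching argument for viscosity supersolutions also needs $\overline w_t$ continuous across $x=b$, which holds because $c_t(b,\cdot)=0$; you gesture at the first and should make the second explicit.
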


For the proof we will perturb the true solution and then extend it to get super- and subsolutions of $(H_N)$ on all of $\R$.  The super- and subsolutions will serve as barriers to show that $v$ is close to $u$ in $\Omega$.  For the specific $v_0$ given by \eqref{initial:H}, the minimal size of the perturbation can be estimated by the barriers and we obtain the rate of convergence. 

\subsubsection*{Building a supersolution}

The first step is to create a supersolution to extend $u$ off $\Omega$, taking the form
\begin{equation}\label{decomposition}
\varphi(x,t) = f(x,t) e^{-N \Phi(x)} .
\end{equation}
Without loss of generality, we will only show the details of the extension to the right of $x = b$.   Note that $\varphi$ satisfies
\begin{align*}
	\varphi_t - \varphi_{xx} - N \varphi_x \Phi_x - N \varphi \Phi_{xx} &= e^{-N \Phi} \Big( f_t - f_{xx} + 2N f_x \Phi_x - N^2 f\Phi_x^2  \\
	&+ N f \Phi_{xx}- N f_x \Phi_x + N^2 f\Phi_x^2  - N f \Phi_{xx}\Big) \\
 &= e^{-N \Phi}\left( f_t - f_{xx} + N f_x \Phi_x\right).
\end{align*}
Thus we need only verify that
\begin{align}\label{eqn:transformeqn}
f_t - f_{xx} + N f_x \Phi_x > 0.
\end{align}
We want $\varphi$ to match $u$ at the boundary, and go above it to the left, that is, $(\varphi_x - u_x)|_{x = b} < 0$.  This would let us create a supersolution extension by taking the infimum of $\varphi$ and $u$. However, since $u_x = 0$ at $b$, this requires $\varphi_x = f_x < 0$ which makes \eqref{eqn:transformeqn} difficult to satisfy.  To avoid this, consider 
\[u_\epsilon(x,t) := u(x,t) + \frac{5\alpha}{b-a}\epsilon (x - (a+b)/2)^2 + \frac{10\alpha}{b-a}\epsilon t.\]
Here $\alpha :=  \norm{u_t(b,\cdot)}_{L^\infty([0,\infty))}$, and $\epsilon$ is a perturbation parameter.  Then $u_\epsilon$ will satisfy the heat equation except with boundary condition $u_{\epsilon,x}(b,t) = 10\alpha \epsilon/2= 5\epsilon \alpha$.

Now we construct $f$ so that it matches $u_\epsilon$ at the boundary.  For simplicity, we assume $b = 0$ and write
\begin{align*}
	f(x,t) &:= u_\epsilon(0,t)  + \alpha \frac{(x - \epsilon)^3 + \epsilon^3}{\epsilon} + \alpha \epsilon x.
\end{align*}
A sample $\varphi$ is shown in Figure~\ref{fig:u_eps}.  The cubic term in $f$ is designed to help for $x$ small, while the linear term will help for larger $x$.  We calculate:
\begin{figure}
\centerline{\includegraphics[width=5in]{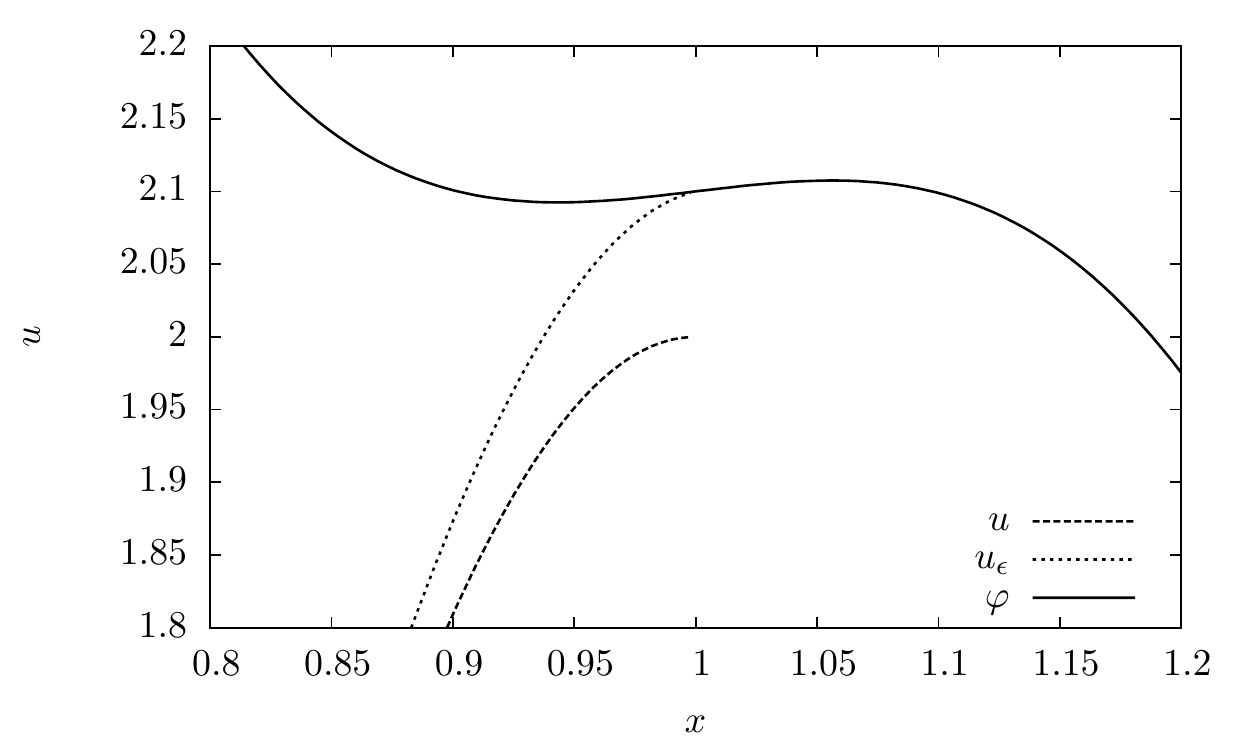}}
	\caption{A sample $u$, $u_\epsilon$, and $\varphi$, where $\Omega = [0,1]$.}
	\label{fig:u_eps}
\end{figure}
 \begin{align*}
 	f_t(x,t) &= u_{\epsilon,t}(0,t)= u_t(0,t) + 10\epsilon \alpha/(0-a), \\
	f_x(0,t) &=  \epsilon (3\alpha + \alpha) < u_{\epsilon, x}(0,t).
 \end{align*}
Then $f_t(x,t) > -2 \alpha$ if $\epsilon < (b-a)/10$.

 Now for $x \in [-\epsilon, \epsilon/2]$, we find
 $$
 	f_x = \alpha \frac{3(x-\epsilon)^2}{\epsilon} + \alpha\epsilon > 0,  \qquad f_{xx} = 6\alpha \frac{x-\epsilon}{\epsilon} \le -3\alpha.
 $$
 Thus we find
 \[f_t - f_{xx} + N f_x \Phi_x > -2\alpha + 3\alpha > 0.\]

 Next, if $x > \epsilon/2$, we find
 $$
  f_x = 3 \alpha(x - \epsilon)^2 / \epsilon + \alpha \epsilon \ge \alpha \epsilon,  \qquad f_{xx} = 6 \alpha(x - \epsilon) / \epsilon < 6 \alpha x / \epsilon.
 $$
 This gives the result
 \begin{align*}
 	f_t - f_{xx} + N f_x \Phi_x &> - 2 \alpha - 6 \alpha x / \epsilon + 3x^2 \alpha \epsilon N \\
	&\ge (N\alpha \epsilon^3 / 4 - 2 \alpha) + \alpha \epsilon^{-1}x \left(2x  \epsilon^2 N - 6  \right).
 \end{align*}
 Then if $N > 8 \epsilon^{-3}$, both terms will be positive in this region, letting us conclude that $\varphi$ is a supersolution of $(H_N)$ on $[-\epsilon, \infty) \times [0,\infty)$.

\subsubsection*{The full supersolution}

Our final supersolution will be as follows:
\begin{equation}\label{supersolution}
	w(x,t) = \left\{
	     \begin{array}{lr}
			u_\epsilon(x,t) &\mbox{ if }  a < x < 0\\
			 \varphi(x,t) & \mbox { if } x \ge 0.
			 \end{array}
		\right.
\end{equation}
This is a supersolution of $(H_N)$ because it can be written as the infimum of a smooth extension of $u_\epsilon$ and $\varphi$.  This works since they touch at $x = 0$ and are ordered appropriately because as shown above,  $u_{\epsilon,x}(0,t) > f_x(0,t)$.   Then for $x > 0$,
\begin{align*}
w(x,0) \ge u_{\epsilon}(0,t) e^{-N \Phi(x)} \ge u_0(0) e^{-N \Phi(x)} = v(x,0).
\end{align*}
Since we can extend $w$ in an analogous way to the left of $a$, applying the comparison principle ensures that $v \le w$ in $\R\times[0,\infty)$ and hence $v \le u_{\epsilon}$ in $[a,b]\times[0,\infty)$.

\subsubsection*{The proof of Theorem~\ref{thm:heat}}

From the supersolution $u_\e$ constructed above setting $N = 10\e^{-3}$, we obtain $v \leq u_\e \leq u + CT\e \leq u+CTN^{-1/3}$. Next, we construct a subsolution as follows. Let 
\begin{align*}
	g(x,t) &= u_{-\epsilon}(0,t)  - \alpha \frac{(x - \epsilon)^3 + \epsilon^3}{\epsilon} - \alpha \epsilon x.
\end{align*}
Then we have that $g_x = - f_x$ and $g_{xx} = -f_{xx}$, and so by similar estimates we find $\psi(x,t) := g(x,t) e^{-N \Phi(x)}$ will be a subsolution on $[-\epsilon, \infty) \times [0,\infty)$.  This lets us extend $u_{-\epsilon}$ to a subsolution $\tilde w$ on all of $\R\times[0,\infty)$.  Then by construction, $\tilde w \le v$ at $t = 0$.  Hence $\tilde w \le v$ for all time by the comparison principle, so in particular $u_{-\epsilon} \le v$ in $[a,b]\times[0,\infty)$.  This lets us conclude that for $(x,t) \in [a,b] \times [0,\infty)$,
\[ u_{-\epsilon}(x,t) \le v(x,t) \le u_{\epsilon}(x,t).\]
Thus provided $N > 10 [(b-a)/10]^{-3}$, we have
\[ \norm{u - v}_{L^\infty(\Omega \times [0,\infty))} < C(u_0, a, b)(T+1) N^{-1/3}. \]

\hfill$\Box$

\begin{remark}
 Perhaps the most natural choice for $v_0$ is $v_0 = u_0$ inside $\Omega$ and zero outside.  In this case the convergence rate can be obtained in $L^1$ norm. Observe that for $w$ as given in \eqref{supersolution}, $v\leq w \leq u+ CN^{-1/3}(T+1)$ in $\Omega \times [0,T]$.  Moreover, since $0\leq v\leq w$ one can show that $\int_{\R\backslash\Omega} v(x,t) dx \leq CN^{-1/3}$.
The above estimates as well as conservation of mass yields that
\begin{align*}
	\norm{v(\cdot, t) - u(\cdot, t)}_{L^1(\Omega)} \leq CN^{-1/3}(T+1).
\end{align*}

\end{remark}

\subsection{The general linear divergence form equation}\label{sec:2d} 
Now we consider the divergence form parabolic equation $(D)$, and the approximating problem $(D_N)$.  Generalizing the extension process used in the one-dimensional case requires using the distance function, which is only smooth if we are close to $\Omega$.  To this end, we will require an intermediate domain $\Omega'$ that contains $\Omega$. For $\gamma$ a lower bound on the radius of interior and exterior balls to $\partial \Omega$, we define $\Omega'$ as  
\begin{align}\label{eqn:omegabd}
\Omega' &:= 	\{x : d(x, \Omega) < d_0\}, \\	
	\mbox{ where } 	d_0 &= \frac{1}{2}\min\left[\gamma, \frac{\gamma}{\sqrt{\Lambda^2 - 1}}\left(\Lambda - \sqrt{\Lambda^2 - 1}\right) \right].
\end{align}
Then we have that $d(x,\Omega)$ is $C^2$ inside $\Omega'$, so we can find a $C^2$ extension $d(x)$ that matches $d(x,\Omega)$ inside $\Omega'$, and goes to infinity as $|x| \to \infty$.
Before we prove Theorem~\ref{thm:neumannapprox2dthm}, we prove two lemmas that will help us extend $u$ off $\Omega$.  We define the mapping $S : \Omega' \to \partial \Omega$ to tell us what boundary point our extension takes data from.  We define $S$ in formula as
\begin{align*}
	S(x,t) := x - \tilde d(x,t) A(x,t) \nabla d(x).
\end{align*}
Here $\tilde d(x,t)$ is defined so that $S(x,t) \in \partial \Omega$, and in the case $A = Id_{n\times n}$ simply equals $d(x)$.  In words, $S$ maps $x$ to the closest point in $\Omega$ in direction $-A(x,t) \nabla d(x)$, whereas the closest point is actually in direction $-\nabla d(x)$; see Figure~\ref{fig:S}. 

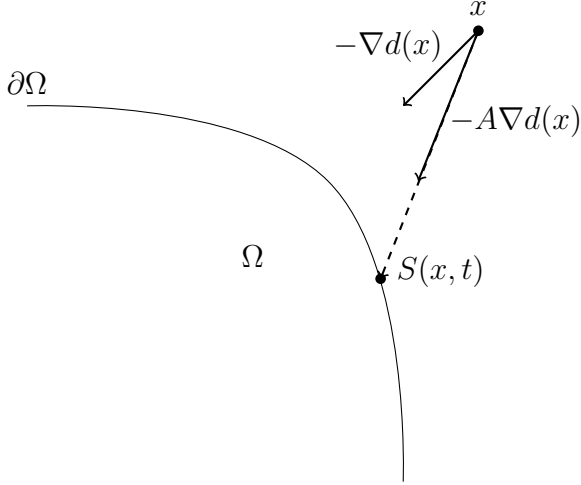
\begin{figure}
\begin{tikzpicture}
\draw plot [smooth, tension = 0.8] coordinates {(5,0) (4,4) (0,5)};
\coordinate (x) at (6,6);
\coordinate (y) at (4.7,2.7);
\fill (x) circle [radius = 2pt];
\draw (3,3) node {\large{$\Omega$}};
\draw  (6,6.3) node {\large{$x$}};
\draw [->, thick, color=black] (x) to (5,5);
\draw (4.8,5.8) node {\large{$-\nabla d(x)$}};
\draw [->, thick, color=black] (x) to (5.2,4.00);
\draw (6.5, 4.8) node {\large{$-A\nabla d(x)$}};
\draw [dashed, ->, thick, color=black] (x) to (y);
\draw (5.5,2.8) node {\large{$S(x,t)$}};
\fill (y) circle [radius = 2pt];
\draw (0,5.3) node {\large{$\partial\Omega$}};

\end{tikzpicture}
\caption{\label{fig:S}An illustration of how $S$ functions}
\end{figure}
Our first lemma shows basic properties of $S$, $d$, and $\tilde d$; the proof employs basic geometry and the implicit function theorem and is deferred to Appendix~\ref{appendixa}.
\begin{lemma}\label{lem:omegalemma}
Suppose that \eqref{eqn:omegabd} holds and $A$ is $C^2$.  Then in $\Omega'$, the distance function $d(x,\Omega)$ is $C^2$, $S(x,t)$ is well defined, and $\tilde d \lesssim d$. Further, for $x \in \Omega' \backslash \Omega$, $A \nabla d |_x \notin T_{S(x,t)} \partial \Omega$.  Lastly, $\tilde d$ is also $C^2$, and hence $S$ is $C^2$ as well, with 
\begin{align}\label{eqn:nablad}
\nabla \tilde d(x,t)^T &= \frac{ \nabla d(S(x,t))^T \left[I - \tilde d(x,t) \nabla A(x,t) \nabla d(x) - \tilde d(x,t) A(x,t) D^2d(x)\right]}{\nabla d(S(x,t)^T) A(x,t) \nabla d(x)}.
\end{align}
\end{lemma}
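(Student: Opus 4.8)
The plan is to establish the five assertions in order, since each relies on the previous one. First, the $C^2$ regularity of $d(x,\Omega)$ inside $\Omega'$ is standard: the choice of $d_0 \le \gamma/2$ in \eqref{eqn:omegabd} guarantees that every point of $\Omega'$ lies strictly inside an exterior ball of radius $\gamma$, so the nearest-point projection onto $\partial\Omega$ is single-valued and as smooth as $\partial\Omega$, i.e.\ $C^2$; differentiating the identity $d(x,\Omega) = |x - \pi(x)|$ gives the usual formulas, in particular $|\nabla d| = 1$ on $\Omega'\setminus\Omega$. Next, to see that $S(x,t)$ is well defined I would fix $x \in \Omega'\setminus\Omega$ and $t$, and consider the ray $s \mapsto x - s\,A(x,t)\nabla d(x)$; since $A \ge Id$ and $\nabla d$ points outward, this ray points ``inward enough'' to cross $\partial\Omega$, and the crossing value $\tilde d(x,t)$ is the smallest positive such $s$. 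The bound $\tilde d \lesssim d$ comes from comparing this oblique ray with the normal segment of length $d(x)$: because the angle between $A(x,t)\nabla d(x)$ and $\nabla d(x)$ is controlled by $\Lambda$ (the ratio of the extremal eigenvalues of $A$), elementary trigonometry with the interior/exterior ball condition bounds $\tilde d$ by a constant multiple of $d$; this is exactly where the second term in the $\min$ defining $d_0$ is used, to keep the oblique ray from grazing $\partial\Omega$ tangentially.

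For the transversality claim $A\nabla d|_x \notin T_{S(x,t)}\partial\Omega$, I would argue by the geometry just set up: if $A(x,t)\nabla d(x)$ were tangent to $\partial\Omega$ at $S(x,t)$, then near that point the ray would stay on the wrong side of (or osculate) $\partial\Omega$, contradicting that $S(x,t)$ is a genuine transversal crossing within distance $\tilde d \lesssim d < $ (interior ball radius); quantitatively, the exterior/interior ball condition at $S(x,t)$ forces $\nabla d(S(x,t))^T A(x,t)\nabla d(x)$ to be bounded below by a positive constant depending on $\lambda,\Lambda,\gamma$, which is both the non-tangency statement and the statement that the denominator in \eqref{eqn:nablad} does not vanish.

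Finally, for the $C^2$ regularity of $\tilde d$ (hence of $S$) and the formula \eqref{eqn:nablad}, I would apply the implicit function theorem to the scalar equation $G(x,t,s) := d\bigl(x - s\,A(x,t)\nabla d(x),\,\Omega\bigr) = 0$, or equivalently $d(S) = 0$ with $S = S(x,t,s)$ evaluated at $s = \tilde d$. The partial derivative $\partial_s G = -\nabla d(S)^T A(x,t)\nabla d(x)$ is nonzero by the transversality/lower-bound just proved, so $\tilde d$ is $C^2$ in $(x,t)$ with the same regularity as the data $d(x,\Omega), A, \partial\Omega$ (all $C^2$). Differentiating $d(S(x,t)) \equiv 0$ in $x$, with $S(x,t) = x - \tilde d(x,t) A(x,t)\nabla d(x)$, gives
\[
0 = \nabla d(S)^T \Bigl[ I - \nabla\tilde d(x,t)\,(A(x,t)\nabla d(x))^T - \tilde d(x,t)\,\nabla A(x,t)\nabla d(x) - \tilde d(x,t)\,A(x,t) D^2 d(x)\Bigr],
\]
and solving the resulting linear relation for $\nabla\tilde d(x,t)^T$ by dividing through by the scalar $\nabla d(S)^T A(x,t)\nabla d(x)$ yields exactly \eqref{eqn:nablad}. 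The main obstacle is the middle step: making the informal ``oblique ray crosses $\partial\Omega$ transversally, at distance comparable to $d$'' argument fully rigorous and uniform, since this is where the precise value of $d_0$ in \eqref{eqn:omegabd} is dictated and where one must carefully track how the eigenvalue bound $Id \le A \le \Lambda\, Id$ controls the deflection angle against the curvature of $\partial\Omega$; everything afterward is a routine implicit-function-theorem computation.
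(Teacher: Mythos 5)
Your proposal follows essentially the same route as the paper's proof: standard $C^2$ regularity of the distance function from the ball condition, a planar geometric argument at the nearest boundary point $y$ (using the interior ball $B_\gamma(z)$ and the bound $\cos\theta \geq 1/\Lambda$ on the deflection angle of $A\nabla d$) to show $S$ is well defined and $\tilde d \lesssim d$, transversality from the ball/eigenvalue bounds, and the implicit function theorem on $f(x,t,\lambda) = d(x - \lambda A\nabla d, \Omega) = 0$ with the same differentiation of $d(S(x,t)) \equiv 0$. The ``main obstacle'' you flag is exactly where the paper does the explicit work, solving for the hit distance $d'$ along the oblique ray and showing the choice of $d_0$ in \eqref{eqn:omegabd} is precisely the condition $(d+\gamma)^2 - d(d+2\gamma)\Lambda^2 \geq 0$, and for transversality it argues by contradiction that a tangent crossing would produce an interior ball whose center is closer to $x$ than $z$, contradicting that $y$ is the nearest boundary point.
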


\begin{lemma}\label{lemma:Slemma}
We can control the size of the directional derivative of $S$ in the drift direction $A \nabla d$ in the following sense:
 \begin{align*}
 	| DS A \nabla d | \bigg|_{(x,t)}\lesssim d(x),
 \end{align*}
 for all $t \ge 0$ and $x \in \Omega' \backslash \Omega$.
 \end{lemma}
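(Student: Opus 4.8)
The plan is to differentiate the explicit formula $S(x,t)=x-\tilde d(x,t)A(x,t)\nabla d(x)$ in the drift direction and exploit a cancellation. Fix $t$ and write $W(x):=A(x,t)\nabla d(x)$ for the (spatial) drift field. A direct computation of the directional derivative of $S$ in the direction $W$ gives
\begin{align*}
D_xS(x,t)\,W(x) \;=\; W(x)\bigl(1-\nabla\tilde d(x,t)^T W(x)\bigr)\;-\;\tilde d(x,t)\,D_xW(x)\,W(x).
\end{align*}
Since $A$ and $d$ are $C^2$ on $\Omega'$ with bounds uniform in $t$, the quantities $|W|$ and $|D_xW\,W|$ are bounded there, and $\tilde d\lesssim d$ by Lemma~\ref{lem:omegalemma}; hence the last term is already $O(d)$. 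So the whole statement reduces to showing $1-\nabla\tilde d^T W = O(d)$.

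For this I would substitute the formula \eqref{eqn:nablad} for $\nabla\tilde d$ into $\nabla\tilde d^T W$. Writing $M:=I-\tilde d\,\nabla A(x,t)\nabla d(x)-\tilde d\,A(x,t)D^2d(x)$, so that $\nabla\tilde d^T=\nabla d(S)^TM/\bigl(\nabla d(S)^TA\nabla d\bigr)$, and using $W=A\nabla d$ one computes $MW=A\nabla d-\tilde d\bigl(\nabla A\nabla d+AD^2d\bigr)A\nabla d$, whence
\begin{align*}
\nabla\tilde d^T W \;=\; 1\;-\;\tilde d\;\frac{\nabla d(S)^T\bigl(\nabla A(x,t)\nabla d(x)+A(x,t)D^2d(x)\bigr)A(x,t)\nabla d(x)}{\nabla d(S(x,t))^T A(x,t)\nabla d(x)}.
\end{align*}
The numerator of the correction is bounded on $\overline{\Omega'}$, uniformly in $t$, by the $C^2$ bounds on $A$ and $d$. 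Everything therefore comes down to a uniform lower bound on the denominator $\nabla d(S(x,t))^TA(x,t)\nabla d(x)$ — and it is exactly the non-tangency assertion of Lemma~\ref{lem:omegalemma} ($A\nabla d|_x\notin T_{S(x,t)}\partial\Omega$) that guarantees this denominator is nonzero.

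To make the lower bound quantitative I would split into two regimes. If $d(x)\ge\delta_0$ for a small constant $\delta_0=\delta_0(\Omega,A)$, the claim $|D_xS\,W|\lesssim d(x)$ is immediate, since $D_xS\,W$ is bounded ($S\in C^2$, $W$ bounded) while $d(x)\ge\delta_0>0$; no cancellation is needed. If $d(x)<\delta_0$, then $|S(x,t)-x|=\tilde d\,|W|\lesssim d(x)$, so $|\nabla d(S)-\nabla d(x)|\lesssim d(x)<C\delta_0$ because $\nabla d$ is Lipschitz on $\Omega'$; together with $|\nabla d|=1$ on $\Omega'\setminus\Omega$ and the ellipticity bound $\nabla d(x)^TA(x,t)\nabla d(x)\ge\lambda=1$, this yields $\nabla d(S)^TA\nabla d\ge 1-C\delta_0\ge\tfrac12$ once $\delta_0$ is chosen small. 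Substituting back, $1-\nabla\tilde d^T W=O(\tilde d)=O(d)$, and hence $|D_xS\,W|\le|W|\,O(d)+\tilde d\,O(1)=O(d)$, with all constants depending only on $\Omega$ and $A$ and uniform in $t\ge 0$. The computation is mechanical; the only genuine content is recognizing the cancellation $\nabla\tilde d^TA\nabla d=1+O(d)$ — reflecting that, to leading order, flowing $x$ along the drift only increases the skew distance $\tilde d$ at unit rate while keeping the foot point $S$ fixed — and the uniform lower bound on $\nabla d(S)^TA\nabla d$, which I expect to be the one step requiring real care.
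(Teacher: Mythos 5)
Your proof is correct and takes essentially the same route as the paper's: both differentiate $S = x - \tilde d\,A\nabla d$ in the drift direction, substitute the formula \eqref{eqn:nablad} for $\nabla\tilde d$, and observe that the leading $W\bigl(1 - \nabla\tilde d^T W\bigr)$ term cancels to $O(\tilde d)$, with the non-tangency of $A\nabla d$ at $S(x,t)$ controlling the denominator $\nabla d(S)^T A\nabla d$. (The paper reaches \eqref{eq:DScalc} by a Taylor expansion rather than by directly applying the product rule, but the computation is the same.) The one place your write-up is tighter is the lower bound on that denominator: the paper appeals to compactness, which is a bit loose since $t$ ranges over $[0,\infty)$, whereas your two-regime split — trivial when $d(x)\ge\delta_0$, and for $d(x)<\delta_0$ combining the Lipschitz bound $|\nabla d(S)-\nabla d(x)|\lesssim d(x)$ with the ellipticity estimate $\nabla d(x)^T A\nabla d(x)\ge 1$ — gives an explicit bound that is manifestly uniform in $t$.
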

\begin{proof}
We remark that as we move slightly in direction $A \nabla d$, $S$ is affected by both $A$ and $\nabla d$ changing.  Since those changes are small, the deflection of $S$ will be proportional to the length we have to travel to get back to $\partial \Omega$, which is proportional to $\tilde d \lesssim d$.

Fix $x$ and $t$.  Define $\eta := A(x,t) \nabla d(x) / |A(x,t) \nabla d(x)|$.  As shown in Lemma~\ref{lem:omegalemma},
\begin{align*}
\nabla \tilde d(x,t)^T \eta &= \frac{ \nabla d(S(x,t))^T \left[I - \tilde d(x,t) \nabla A(x,t) \nabla d(x) - \tilde d(x,t) A(x,t) D^2d(x)\right]}{\nabla d(S(x,t))^T A(x,t) \nabla d(x)} \eta \\
	&= \frac{1}{|A(x,t) \nabla d(x)|} - \tilde d(x,t)\left[ \nabla d(S(x,t))^T \frac{\nabla A(x,t) \nabla d + A(x,t) D^2d(x) }{\nabla d(S(x,t))^T A (x,t) \nabla d(x)}\eta\right] \\
	& =:  \frac{1}{|A(x,t) \nabla d(x)|} + \beta\tilde d(x,t).
\end{align*}
Here $\beta$ is defined this way for brevity and $\nabla A \nabla d$ is the matrix whose $(i,j)$ entry is $(\nabla a^{ij})^T \nabla d$.  We Taylor expand the quantities in $S$ in direction $\eta$ to find
\begin{align*}
  \tilde d(x + h \eta,t) &= \tilde d(x,t) + \frac{h}{|A(x,t) \nabla d(x)|} + \beta\tilde d h+ O(h^2), \\
	\nabla d(x + h \eta) &= \nabla d(x) + h D^2 d \eta + O(h^2), \\
	(A(x+ h \eta,t))  &= A(x,t)+ h (\nabla A  \eta) + O(h^2).
\end{align*}
Calculating the directional derivative directly yields
\begin{align*}
	S(x + h \eta,t) - S(x,t) &= x + h \eta - \tilde d(x+ h \eta,t) A(x + h \eta,t) \nabla d(x+ h \eta) - x + \tilde d(x) A(x) \nabla d(x) \\
	&= h \eta - \left(\tilde d + \frac{h}{|A \nabla d|} + h \beta\tilde d \right) (A + h (\nabla A\eta)) ( \nabla d + h D^2 d \eta)   + \tilde d A \nabla d +O(h^2)\\
	&=  h \left(\eta - \tilde d A D^2 d \eta -\eta - \beta\tilde d \eta -  \tilde d (\nabla A \eta) \nabla d\right)+ O(h^2).
\end{align*}
Dividing by $h$ and taking the limit as $h \to 0$, we see that 
\begin{align} \label{eq:DScalc}
	DS A \nabla d &=  -|A \nabla d| \left(  \tilde d A D^2 d \eta + \beta\tilde d \eta +  \tilde d (\nabla A \eta) \nabla d\right).
\end{align}
Then by Lemma~\ref{lem:omegalemma}, $A \nabla d|_x \notin T_{S(x,t)} \partial \Omega$, that is, it is not tangent to $\partial \Omega$ at the point $S(x,t)$.  Thus by compactness, we find $\nabla d(S(x,t))^T A(x,t) \nabla d(x)$ can be bounded away from zero, and so $\beta < C(A, \Omega) $.  Then factoring out $\tilde d$ from \eqref{eq:DScalc} and using that $\tilde d \lesssim d$ from Lemma~\ref{lem:omegalemma}, we find
\begin{align*}
|	 DS A \nabla d | < C( A, \Omega) \tilde d < C(A, \Omega) d.
\end{align*}
\end{proof}
With this lemma in hand, we are ready to prove Theorem~\ref{thm:neumannapprox2dthm}.  We define $v_0$ as follows:
\begin{align}\label{eqn:v0}
	v_0(x) := \left\{
	     \begin{array}{lr}
		u_0(x)	 & \mbox{in } \Omega\\
		e^{-N\Phi(x)}\mu(x)	u_0(S(x,0))	& \mbox{ in } \Omega^c.
			 \end{array}
		\right.
\end{align}
Here $\mu(x) : \R^n \to [0,1]$ is a smooth function that is one when $d(x,\Omega) < d_0 / 2$ and zero when $d(x,\Omega) > d_0$.  This smoothing factor $\mu$ is necessary since the map $S$ is only defined when $d(x,\Omega) < d_0$.

\subsubsection{Proof of Theorem~\ref{thm:neumannapprox2dthm}}
\begin{proof}[Proof of Theorem~\ref{thm:neumannapprox2dthm}]
We proceed in a similar fashion to the heat equation case, with the difference being more care is required in the extension process.  In particular, the extension used previously now only works on $\Omega'$, and we have to patch it to another solution to create a supersolution on all of $\R^n$.

First, we perturb $u$ to $u_\epsilon$ which has a small positive slope at the boundary.  We proceed by considering the signed distance function 
\begin{align*}
	h(x) &= \left\{
	     \begin{array}{lr}
			 d(x,  \Omega^c)&\mbox{ if } x \in \Omega \\
			-d(x, \Omega) 	& \mbox{ if } x \in \Omega' \backslash \Omega,
			 \end{array}
		\right.
\end{align*}
defined in a neighborhood of $\partial \Omega$ where this is $C^2$.  We extend $h$ to a $C^2$ function $\tilde h$ on all of $\Omega$, and define
\begin{align}\label{approximation}
	u_\epsilon(x,t) &:= u(x,t) + 5\alpha \epsilon\Lambda \left(-\tilde h(x) + \norm{\nabla \cdot (A \nabla \tilde h)}_{L^\infty(\Omega\times [0,\infty))} t + \norm{\tilde h}_{L^\infty(\Omega)} \right),
\end{align}
where $\epsilon > 0$ is a perturbation parameter and $\alpha$ is a constant to be chosen later.  Then $u_\epsilon$ will be a supersolution of $(D)$ satisfying $u_\epsilon \ge u$ and at $\partial \Omega$,
\begin{align*}
	\nabla u_\epsilon^T  A \vec \nu = \nabla u^T  A \vec \nu - 5 \alpha \epsilon\Lambda \nabla \tilde h^T A \vec \nu =5 \alpha \epsilon \Lambda\vec \nu^T A \vec \nu \ge 5\alpha \epsilon\Lambda.
\end{align*}
We look for a supersolution of the modified equation $(D_N)$ on $\Omega' \backslash \Omega$ of the form
\begin{equation}\label{decomp_2}
\varphi(x,t) = f(x,t) e^{-N\Phi(x)}. 
\end{equation}
Let us calculate how this transforms the equation in detail.  We have that
\begin{align*}
	\varphi_t & = f_t e^{-N \Phi}, \\
	\nabla \varphi &= e^{-N \Phi} (\nabla f - N f \nabla \Phi), \\
	-N\nabla \cdot (\varphi A \nabla \Phi) &= e^{-N\Phi} \left(-N\nabla f^T A \nabla \Phi + N^2 f \nabla \Phi^T A \nabla \Phi -N f \sum_{i,j} a^{ij} \Phi_{x_i x_j} - N f \sum_{i,j} a^{ij}_{x_i} \Phi_{x_j} \right), \\
	- \nabla \cdot (A \nabla \varphi) &= - e^{-N \Phi} \sum_{i,j} \Big[ a^{ij}_{x_i} (f_{x_j} - N f \Phi_{x_j}) \Big]\\
	&+ \sum_{i,j} a^{ij}\left[ f_{x_i x_j} - N f_{x_j} \Phi_{x_i} - N f_{x_i} \Phi_{x_j} - N f \Phi_{x_i x_j} +  N^2 f \Phi_{x_j} \Phi_{x_i} \right] \\
	&=  e^{-N \Phi} \left(2 N \nabla f^T A \nabla \Phi - N^2 f \nabla \Phi^T A \nabla \Phi\right) \\
	& +e^{-N \Phi} \sum_{i,j} \left[a^{ij}_{x_i} (N f \Phi_{x_j} - f_{x_j} )+ a^{ij} (N f \Phi_{x_i x_j} -f_{x_i x_j} )\right] .
\end{align*}
Summing these, we find
\begin{align*}
	\varphi_t - \nabla \cdot [A \nabla \varphi - N \varphi A \nabla \Phi] &=  e^{-N \Phi} \bigg( f_t + N \nabla f^T A \nabla \Phi  - \sum_{i,j} [a^{ij}_{x_i}f_{x_j} + a^{ij} f_{x_i x_j}] \bigg) \\
	&=: e^{-N \Phi} (f_t - \mathcal{M} f),
\end{align*}
where $\mathcal{M}$ is defined this way for brevity.  Then we set
\begin{align*}
	f(x,t) := u_{\epsilon}(S(x,t),t) + \alpha\frac{(d(x) - \epsilon)^3 + \epsilon^3}{\epsilon} + \alpha \epsilon d(x),
\end{align*}
where $S(x,t)$ is the mapping onto $\partial \Omega$ defined above.
We calculate:
\begin{align*}
	f_t &= u_{\epsilon,t} + \sum_{i = 1}^n u_{\epsilon,x_i} S_{i,t}, \\
	\nabla f^T &= \nabla u_\epsilon^T DS + 3\alpha\frac{(d(x) - \epsilon)^2}{ \epsilon}  \nabla d^T + \alpha \epsilon \nabla d^T, \\
	f_{x_i x_j} &= \sum_{k,l} \frac{\partial^2 u_\epsilon}{\partial x_l \partial x_k}  \frac{\partial S_l}{\partial x_j} \frac{\partial S_k}{\partial x_i} + \sum_{k}\frac{\partial u_\epsilon}{\partial x_k } \frac{\partial^2 S_k}{\partial x_j \partial x_i} + 6 \alpha\frac{(d(x) - \epsilon)}{\epsilon}d_{x_i} d_{x_j}+ \alpha\left(3 \frac{ (d(x) - \epsilon)^2}{  \epsilon} + \epsilon\right) d_{x_i x_j}, \\
	\nabla \Phi &= 3d(x)^2 \nabla d .
\end{align*}
Then we can find $C(u, A, \Omega)$ so that the following bounds hold for $\epsilon$ small:
\begin{align}
	\bigg|  \sum_{i,j} & a^{ij}\left[\sum_{k,l} \frac{\partial^2 u_\epsilon}{\partial x_l \partial x_k}  \frac{\partial S_l}{\partial x_j} \frac{\partial S_k}{\partial x_i} + \sum_{k}\frac{\partial u_\epsilon}{\partial x_k } \frac{\partial^2 S_k}{\partial x_j \partial x_i} \right] + \sum_{i,j,k} a^{ij}_{x_i} \frac{\partial u_\epsilon}{\partial x_k} \frac{\partial S_k}{\partial x_j} \bigg| \le C(u,\Omega, A), \label{eqn:ubds0}\\
	 u_{\epsilon,t} &= u_t + 5 \alpha \epsilon \Lambda \norm{\nabla \cdot (A \nabla \tilde h)}_{L^\infty (\Omega\times [0,\infty))} \ge -\norm{u_t}_{L^\infty(\partial \Omega\times [0,\infty))} \ge -C(u,A, \Omega),\label{eqn:ubds1}\\
	 \left| \sum_{i = 1}^n u_{\epsilon,x_i} S_{i,t}\right| &= \left|\sum_{i = 1}^n u_{x_i} S_{i,t} - 5 \alpha \epsilon \Lambda \sum_{i = 1}^n \tilde h_{x_i} S_{i,t}\right| \le C(u, A, \Omega) +  \alpha \epsilon C(u, A, \Omega).\label{eqn:ubds2}
\end{align}
Then in particular we find that $f_t \ge -2C - \alpha \epsilon C$, so 
\begin{align*}
f_t - \mathcal{M}f &= f_t  + N \nabla f^T A \nabla \Phi - \sum_{i,j} [a^{ij}_{x_i}f_{x_j} + a^{ij} f_{x_i x_j}] \\
 & \ge -2C - \alpha \epsilon C + N \nabla f^T A \nabla \Phi -C(u, \Omega, A)  - \sum_{i,j} a^{ij}_{x_i} \alpha\left[3  \frac{(d(x)- \epsilon)^2}{  \epsilon} + \epsilon\right] d_{x_j} \\
 &+ - 6\alpha \frac{(d(x) - \epsilon)}{\epsilon} \nabla d^T A \nabla d -\sum_{i,j} a^{ij} \alpha\left[3  \frac{(d(x) - \epsilon)^2 }{ \epsilon }+ \epsilon\right] d_{x_i x_j}.
\end{align*}
Now let us examine the $\nabla f^T A \nabla \Phi$ term more carefully. Applying Lemma~\ref{lemma:Slemma}, we have
\begin{align*}
	\nabla f^T A \nabla \Phi &=  3d(x)^2\left( \alpha[3  (d(x) - \epsilon)^2 / \epsilon + \epsilon] \nabla d^T A \nabla d - \nabla u_\epsilon DS A \nabla d \right) \\
	&\ge 3d(x)^2\left( 3  \alpha (d(x) - \epsilon)^2 / \epsilon +\alpha \epsilon  - C d\right) .
\end{align*}
Then the inner polynomial has a minimum at $d = \epsilon + C \epsilon / 6 \alpha$, where it achieves the value of $\epsilon(\alpha - C^2 / 12 \alpha -C)$.  Thus provided $\alpha \ge 3 C(u, \Omega, A)$, we have $N \nabla f^T A \nabla \Phi \ge 3N \epsilon \alpha d(x)^2 / 2$.  Then this assumption on $\alpha$ yields the bound
\begin{align}\label{eq:Mfbd}
	f_t - \mathcal{M}f &\ge -\alpha - \alpha \epsilon C+ \frac{3}{2}N \epsilon \alpha  d(x)^2 - 6\alpha\frac{ (d(x) - \epsilon)}{\epsilon} \nabla d^T A \nabla d \notag\\
	& - \left[3 \alpha \frac{(d(x)- \epsilon)^2 }{ \epsilon} + \epsilon\right]\sum_{i,j} (a^{ij}_{x_i} d_{x_j} + a^{ij} d_{x_i x_j}). 
\end{align}
Now suppose $ -\epsilon < d(x) < \epsilon/2$. This lets us use the estimate $-(d - \epsilon) \nabla d^T A \nabla d \ge \epsilon/2 $ to bound \eqref{eq:Mfbd} by
\begin{align*}
f_t - \mathcal{M}f &\ge -\alpha + 3 \alpha/2 + O(\epsilon).
\end{align*}
Thus it follows that $\varphi$ is a supersolution here if $\epsilon$ is small.

Next, suppose $ \epsilon / 2 \le d(x) \le d_0$.  Then we simplify \eqref{eq:Mfbd} to get
\begin{align*}
	f_t - \mathcal{M}f &\ge -\alpha - \alpha \epsilon C+ \frac{3}{2}N \epsilon \alpha  d^2 - 6\alpha \Lambda d/\epsilon - C[3 \alpha d^2 / \epsilon + \epsilon] \\
	&= \alpha(Nd^2 \epsilon/2 - 1 - \epsilon C) + \frac{\alpha}{\epsilon}\left(N d^2 \epsilon^2 - 6 \Lambda d - 3C d^2 - C \epsilon^2 \right).
\end{align*}
Then setting $N = 12(Cd_0+\Lambda+1)\epsilon^{-3}$, both terms will be positive, and so $\varphi$ will be a supersolution over all of $\Omega' \backslash \Omega \times [0,\infty)$.

\subsubsection{Creating the full supersolution}
As in the one dimensional case, we define 
\begin{align*}
	w(x,t) &= \left\{
	     \begin{array}{lr}
			 u_\epsilon(x,t) & \mbox{ if } x \in \Omega\\
			 \varphi(x,t)	& \mbox{ if } x \in \Omega' \backslash \Omega.
			 \end{array}
		\right.
\end{align*}
We need to check that $w$ is in fact the infimum of the two supersolutions $u_\epsilon$ and $\varphi$.  This is because at $\partial \Omega$ by construction we have $\varphi(x,t) = u_\epsilon(S(x,t),t) = u_\epsilon(x,t)$, while $\nabla u_\epsilon \cdot A \vec \nu \ge 5 \alpha \epsilon \Lambda$ and
\begin{align*}
	\nabla \varphi \cdot A \vec \nu &= \nabla f \cdot A \vec \nu = \nabla u_\epsilon ^T DS A \vec \nu + 4 \alpha \epsilon \vec \nu^T A \vec \nu \le 4 \alpha \epsilon \Lambda.
\end{align*}
Then since $-A \vec \nu$ points inside $\Omega$, it follows that $\varphi > u_\epsilon$ immediately inside $\Omega$, so $w$ is in fact a supersolution.  Note again that this infimum procedure works even though $u_\epsilon$ is only defined inside $\Omega$, since it crosses $\varphi$ exactly at $\partial \Omega$.  

Now  we extend $w$ again from $\Omega'$ to all of $\R^n$.  To do this, consider a stationary solution
\begin{align*}
  \eta(x) =  2\norm{u_\epsilon}_{L^\infty(\partial \Omega \times[0,\infty))} e^{-N \Phi(x)}.
\end{align*}
Then at $\partial \Omega$, $\eta > w$ and at $\partial \Omega'$, if $\epsilon < d_0 / 2$, we have
\begin{align*}
	\eta(x) &= 2 \norm{u_{\epsilon}}_{L^\infty(\partial \Omega \times [0,\infty))} e^{-N \Phi(x)}, \\
	w(x,t) &= \varphi(x,t) \ge  \alpha \frac{d_0^3}{ 8\epsilon} e^{-N \Phi(x)} .
\end{align*}
Thus $w$ starts off below $\eta$ and provided $\epsilon < d_0^3 \alpha(16 \norm{u_\epsilon}_{L^\infty(\partial \Omega \times [0,\infty))})^{-1}$, $w$ must cross $ \eta$ before $\partial \Omega'$.  Thus by taking another infimum, $w$ can be extended to be a solution on all of $\R^n$. 

Lastly we check the ordering of $w$ versus $v$ at the parabolic boundary.  At $t = 0$, the ordering is clear inside $\Omega$, and since 
\begin{align*}
	f(x,0) \ge u_\epsilon(S(x,0),0) \ge \mu(x)  u_\epsilon(S(x,0),0) = \mu(x) \left[ u _0(S(x,0)) + 5 \alpha \epsilon \Lambda \norm{\tilde h}_{L^\infty(\Omega)} \right ],
\end{align*}
 it follows that $\varphi(x,0) \ge v_0(x)$ outside $\Omega$.  Also, $\eta(x) \ge v_0(x)$ by construction, so $w(x,0) \ge v_0(x)$ as well.  Thus we can apply the comparison principle to deduce that
\begin{align*}
	v(x,t) \le w(x,t) \mbox{ in } \R^n \times [0, \infty).
\end{align*}
Since $w(x,t) = u_{\epsilon}(x,t) \in \Omega$, we see that in $[0,T]$,
\begin{align*}
	v(x,t) - u(x,t) &\lesssim \epsilon (T+1) \lesssim N^{-1/3} (T+1).
\end{align*}

Now we construct the subsolution of $(D_N)$ via a parallel procedure.  As in the one-dimensional heat equation case, define 
\begin{align*}
	g(x,t) &:= u_{-\epsilon}(S(x,t),t) - \alpha\frac{(d(x) - \epsilon)^3 + \epsilon^3}{\epsilon} - \alpha \epsilon d(x).
\end{align*}

This crosses the zero solution inside $\Omega'$, since for $\epsilon$ small, $g(x,t)< 0$ when $d(x, \Omega) = d_0 / 2$.  Then by taking a supremum with the zero stationary solution, we can extend $g(x,t)e^{-N \Phi(x)}$ to a subsolution on all of $\R^n$ that equals $u_{-\epsilon}$ in $\Omega$ and starts below $v_0$.  Thus employing the comparison principle lets us deduce that $v(x,t) \ge u_{-\epsilon}(x,t)$ in $\Omega \times [0,\infty)$, and so we conclude that in $\Omega \times [0,T]$,
\begin{align*}
	|v(x,t) - u(x,t)| \lesssim N^{-1/3} (T+1).
\end{align*}
\end{proof}

\section{Finite Difference Approximation} 
\label{sec:numerics}
	In this section we present the results of applying the Crank-Nicolson finite difference method to implement the approximation technique.  First we consider the heat equation on $[0,1]$ with initial data $u_0(x) = \cos(2\pi x) + 1$, which admits solution $u(x,t) = e^{-4\pi^2 t} \cos(2\pi x) + 1$.  We ran the scheme with $300$ spacial panels and $2000$ temporal panels, with zero Dirichlet conditions at the boundary of $\Omega' = [-1,2]$. We ran the experiment for $t \in [0,0.3]$.  The true solution is shown in Figure~\ref{fig:v1DEx2}, and two approximate solutions are shown in Figures~\ref{fig:v1DN100Ex2} to \ref{fig:v1DN1000Ex2}.  
	
Estimates of the rate of convergence as we vary $N$ are shown in Table~\ref{tab:est}, which are close to the analytic result of $N^{-1/3}$.  To remove the inaccuracy of the underlying numerics, for each $N$ we ran the scheme with varying space and time panels, and found that for all $N$ tried, 6400 spacial panels and 102400 time panels ensured the results were stable for fixed $N$.  Note that the $L^\infty$ errors were approximately constant in time, and so it sufficed to use the final error.

\begin{figure}
\centerline{\includegraphics[width=4in]{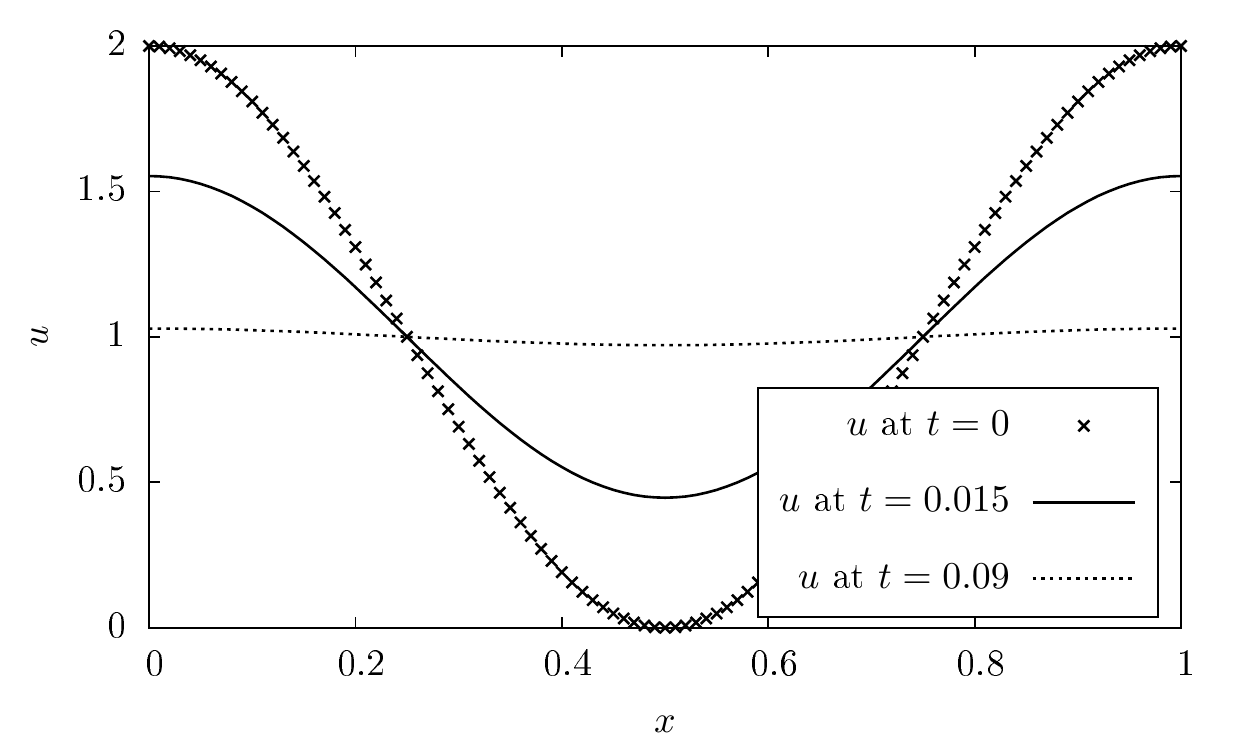}}
\caption{The true solution $u$ at various time points}
\label{fig:v1DEx2}
\end{figure}
\begin{figure}
\centerline{\includegraphics[width=4in]{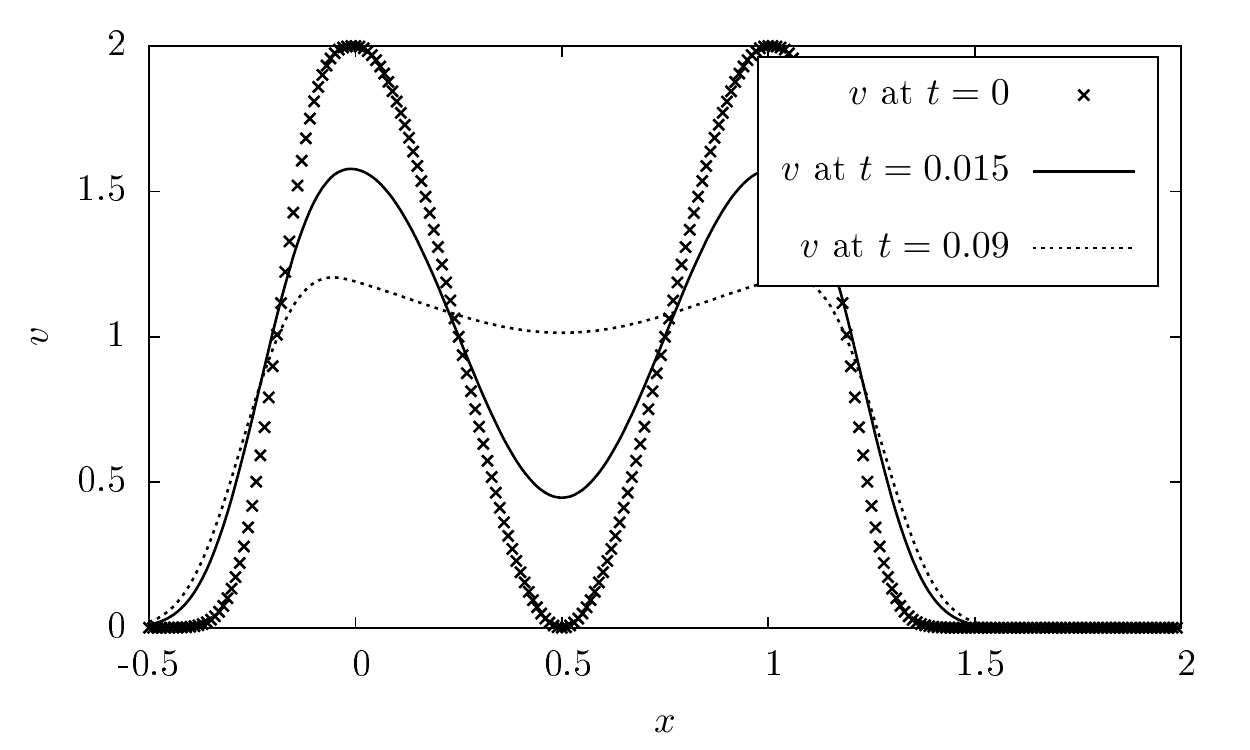}}
\caption{The approximating solution $v$ with $N = 100$}
\label{fig:v1DN100Ex2}
\end{figure}
\begin{figure}
\centerline{\includegraphics[width=4in]{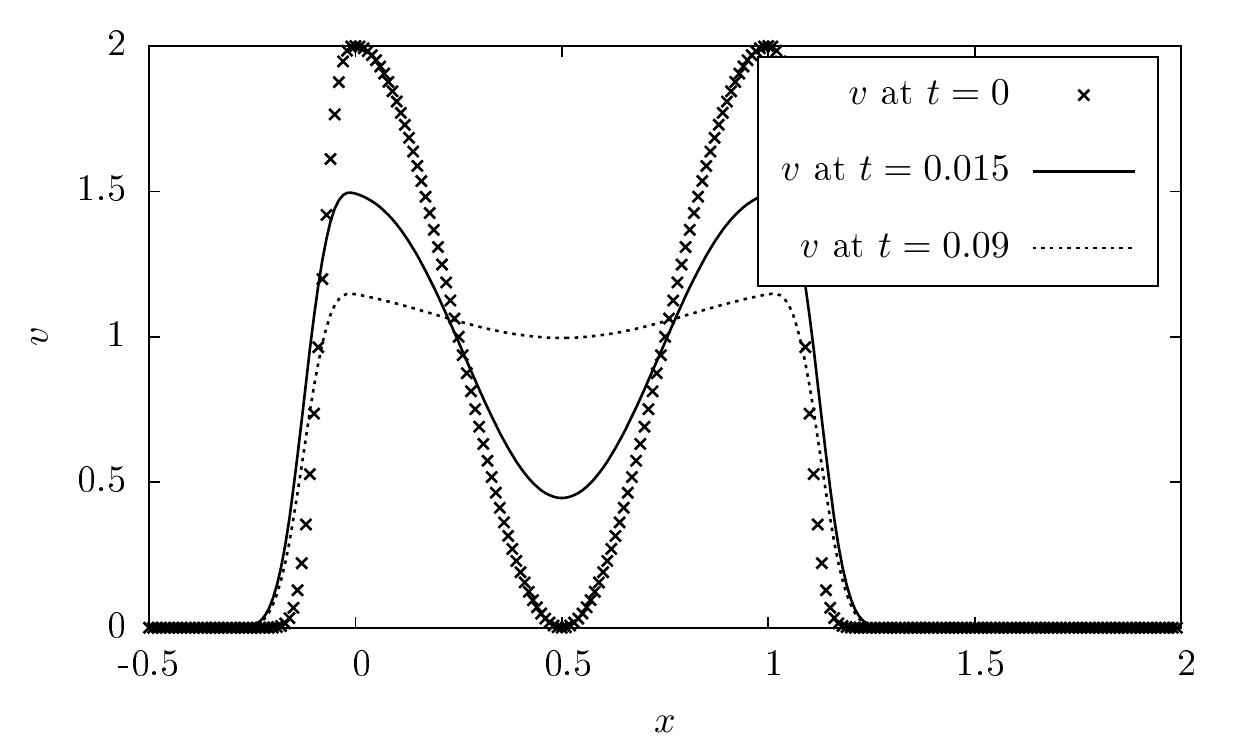}}
\caption{The approximating solution $v$ with $N = 1000$}
\label{fig:v1DN1000Ex2}
\end{figure}

\begin{table}
\centering
\begin{tabular}{l  c  r}
$N$ & $\norm{u-v_N}_{L^\infty([-1,2] \times \{t = 0.3\})}$ & $p$ estimate \\
\hline
$8192$ & $0.0438889$ & $0.2922126913041514$ \\
$16384$ & $0.0356576$ & $0.2996465089723717$ \\
$32768$ & $0.0288452$ & $0.30587833655159546$ \\
$65536$ & $0.0232513$ & $0.3110198856649859$ \\
$131072$ & $0.018688$ & $0.3151992014544745$ \\
$262144$ & $0.0149858$ & $0.31851607677867955$ \\
\hline
\end{tabular}
\caption{\label{tab:est}Estimates of the rate of convergence $N^{-p}$ as we vary $N$}
\end{table}

The big advantage this method has for numerics is that it allows one to run finite difference schemes on domains with general geometry, which normally would require finite element methods.  Since the routine winds up being done over a box, spectral methods can be used.  Also, unlike the finite element methods, this method can be generalized to handle non-linear terms on the interior, at the expense of losing the converge rate estimate.

\section{PDEs of non-divergence form}\label{sec:general}

Building on the previous constructions of barriers, we are now ready to address the general problem given in the introduction in Theorem~\ref{main:thm}. Let $u$ solve $(P_g)$, and let $v$ solve $(P_N)$. To use the barrier argument from the previous section, the boundary operator $F$ must correspond to the operator $\nabla \cdot(A  \nabla v)$ in a neighborhood of $\partial \Omega$. 
This necessitates the introduction of two auxiliary problems which feature a regularized operator $F_r$. Towards this, we define
$$
\Omega_r: = \{x\in\Omega: d(x,\partial\Omega)>r\}.
$$ 
Now take a smooth function $f(x)$ which is zero for $x<1$ and one for $x>2$, and write 
$$
g(x):= f(r^{-1}d(x,\partial\Omega)).
$$
Then we define
\begin{equation}\label{interpolation}
F_r(D^2u,Du,u, x,t):= g(x)F(D^2u,Du,u,x,t) + (1-g(x))\nabla\cdot (A(x,t)\nabla u),
\end{equation}
which is smooth, satisfies \eqref{elliptic}, equals $F$ in $\Omega_{2r}$, and equals $\nabla \cdot (A(x,t) \nabla u)$ outside of $\Omega_{r}$. 

\medskip

Next let $w$ and $\tilde v$ solve the two auxiliary problems:
$$
\left\{\begin{array}{lll}
w_t - F_r(D^2w, Dw,w,x,t) =0&\hbox{ in }&\Omega\times [0,T];\\ \\
(A(x,t)\nabla w)\cdot \vec \nu = 0 &\hbox{ on }& \partial\Omega\times [0,T];\\ \\
w(x,0)=u_0(x) &\hbox{ in } & \Omega, \end{array}\right.\leqno (P_r)
$$
and
$$
\left\{\begin{array}{ll}
\tilde{v}_t - F_r(D^2\tilde{v}, D\tilde{v}, \tilde v,x,t) - N\nabla \cdot [\tilde{v} A(x,t) \nabla \Phi] = 0 &\hbox{ in } \R^n \times [0,\infty);\\ \\
\tilde{v}(x,0)=v_0(x) &\hbox{ in } \R^n.
\end{array}\right.\leqno(P_{r,N})
$$

The proof of Theorem~\ref{main:thm} proceeds by showing that as $r \to 0$, the solutions of the auxillary problems $(P_r)$ and $(P_{r,N})$ converge to the solutions of the original problems, uniformly in $N$.  Then since the barriers constructed for the divergence form PDEs apply to the auxiliary problems, this will finish the proof (see Figure~\ref{commutDiag}).

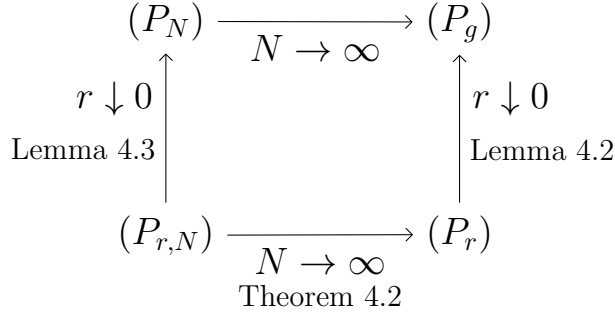
\begin{figure}\label{commutDiag}
\centering
\begin{tikzpicture}\Large
  \matrix (m) [matrix of math nodes,row sep=4em,column sep=5em,minimum width=2em] {
     (P_N) & (P_g) \\
     (P_{r,N}) & (P_r)\\};
  \path[-stealth]
    (m-2-1) edge [->, >=angle 90] node [above left] {$r \downarrow 0$} node [below left, scale = .8] {Lemma~\ref{lem_3}} (m-1-1)
    (m-2-1) edge [->, >=angle 90] node [below] {$N \to \infty$} node [below, yshift = -15, scale = .8] {Theorem~\ref{thm_1}} (m-2-2)
    (m-2-2) edge [->, >=angle 90] node [above right] {$r \downarrow 0$} node [below right, scale = .8] {Lemma~\ref{lem_1}} (m-1-2)
	(m-1-1) edge [->, >=angle 90] node [below] {$N \to \infty$} (m-1-2)
       ;
\end{tikzpicture}
\caption{How the different problems and their solutions relate as we vary the parameters.}
\end{figure}

As a preliminary step, we will develop uniform estimates for $w$ independent of $r$, using the following particular cases of Theorems 2.5 and 2.8 in Kim-Krylov \cite{KK}:

\begin{theorem}\label{KK}
Let us denote $x=(x_1,...,x_n)\in\R^n$ and define
$$
\mathcal{L}u := a^{ij}(x,t)u_{ij} + b(x,t)\cdot Du + c(x,t)u,
$$
where the $\{a^{ij}\}$ satisfy \eqref{elliptic}, are continuous with respect to $(x_1,...,x_{n-1})$, and measurable with respect to the $x_n$ variable. Also, assume $b$ and $c$ satisfy \eqref{Lipschitz}.  Then the following holds for $2< p<\infty$ and $T>0$:
\begin{itemize}
\item[(a)] For any $f \in L^p(\R^n\times [0,T])$, there exists a unique $u\in W^{2,1}_p(\R^n\times (0,T])$ such that $u_t -\mathcal{L}u =f$ in $\R^n \times (0, T]$ with $u(\cdot,0)=0$. Moreover 
$$\|u\|_{W^{2,1}_p(\R^n\times [0,T])} \leq C\|f\|_{L^p(\R^n\times [0,T])}.$$
Here $C$ depends only on $n,p, T, \lambda, \Lambda$, the bounds for $b$ and $c$, and the mode of continuity of the $\{a^{ij}\}$ with respect to $(x_1,...,x_{n-1})$.
\item[(b)] Let $H$ be the half space $\{x=(x_1,...,x_n)\in\R^n: x_n>0\}$ and let $l\in H$. Then for a given $f\in L^p(H\times (0,T])$ there exists a unique $u\in W^{2,1}_p(H\times (0,T])$ satisfying
$$
\left\{\begin{array}{lll}
u_t - \mathcal{L}u = f &\hbox{ in }& H\times (0,T];\\
l\cdot Du = 0 &\hbox{ on }& \partial H \times (0,T];\\
u(x,0)=0 &\hbox { in } H,
\end{array}\right.
$$
with the estimate
$$
\|u\|_{W^{2,1}_p(H\times (0,T])} \leq C\|f\|_{L^p(H\times (0,T])}.
$$
Here $C$ depends only on $n, p, T, l,\lambda, \Lambda$, the bounds for $b$ and $c$,  and the mode of continuity of the $\{a^{ij}\}$ with respect to $(x_1,...x_{n-1})$.
\end{itemize}
\end{theorem}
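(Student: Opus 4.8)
The plan is to deduce the statement from Theorems 2.5 and 2.8 of \cite{KK}; the only work is to reduce the present operator $\mathcal{L}$ and the boundary condition $l\cdot Du=0$ to the form covered there and to assemble the standard three ingredients: an a priori $W^{2,1}_p$ bound for the principal part alone, absorption of the lower-order terms, and existence by the method of continuity, with part (b) reduced to part (a).

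For the a priori estimate I would first treat $\mathcal{L}_0 u := a^{ij}(x,t)u_{ij}$, aiming at $\|u\|_{W^{2,1}_p(\R^n\times[0,T])}\le C\|u_t-\mathcal{L}_0 u\|_{L^p}$ for $u$ with $u(\cdot,0)=0$. Using a partition of unity in the tangential variables $x'=(x_1,\dots,x_{n-1})$ at a small scale $\rho$, on each patch $\mathcal{L}_0$ differs from the operator with $x'$-frozen coefficients $a^{ij}(x_0',x_n,t)$ by a term whose sup norm is the modulus of continuity of $\{a^{ij}\}$ in $x'$ at scale $\rho$; taking $\rho$ small makes this a perturbation of $D^2u$ that is absorbed on the left. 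For coefficients depending only on $(x_n,t)$ — measurable in a single spatial direction — the $W^{2,1}_p$ estimate is the Krylov-type result that is the real input here. Reassembling the local bounds, the cutoff commutators are lower-order and are handled in the next step.

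Given the principal estimate, $\|u\|_{W^{2,1}_p}\le C\big(\|f\|_{L^p}+\|b\|_\infty\|Du\|_{L^p}+\|c\|_\infty\|u\|_{L^p}\big)$, and I would absorb the last two terms via the interpolation inequality $\|Du\|_{L^p}\le\delta\|D^2u\|_{L^p}+C_\delta\|u\|_{L^p}$ together with the bound $\|u\|_{L^p(\R^n\times[0,T])}\le T\|u_t\|_{L^p(\R^n\times[0,T])}$ (valid since $u(\cdot,0)=0$, by writing $u=\int_0^t u_t\,ds$ and Hölder), which works for $T$ small depending on $\|b\|_\infty,\|c\|_\infty$; iterating over consecutive short intervals covers an arbitrary $[0,T]$ with the stated dependence of $C$. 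Existence in (a) then follows by the method of continuity along $\mathcal{L}_s=(1-s)\Delta+s\mathcal{L}$, since each $\mathcal{L}_s$ obeys \eqref{elliptic} with the same constants and the same a priori estimate and $\partial_t-\Delta$ with zero initial data is solvable in $W^{2,1}_p$, while uniqueness is immediate from the estimate. For part (b), since $l\notin T\partial H$ it has a nonzero $x_n$-component, so an affine change of variables fixing $H$ sends $l$ to $e_n$ and turns the boundary condition into the conormal one $D_n u=0$ on $\{x_n=0\}$; this change mixes only the directions tangential to $\partial H$ among themselves, so the transformed coefficients remain continuous in the new tangential variables and merely measurable in $x_n$. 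Even reflection across $\{x_n=0\}$, with the odd reflection of the mixed coefficients $a^{in}$ ($i<n$) and of the normal component of $b$, then yields a whole-space problem of the type in (a), since reflecting a function measurable in $x_n$ keeps it measurable in $x_n$ with the same tangential modulus; applying (a) and restricting back to $H$ gives (b).

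The main obstacle is precisely the estimate for coefficients merely measurable in $x_n$ underlying the a priori bound, together with its compatibility with the reflection in (b): these cannot be obtained from the usual continuous- or VMO-in-all-variables theory, and constitute the nontrivial content of \cite{KK} that must be quoted rather than re-derived. Everything else — the $x'$-freezing, the interpolation and time-iteration that absorb $b$ and $c$, and the continuity method for existence — is routine.
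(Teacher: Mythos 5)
The paper does not actually prove Theorem~\ref{KK}: it is stated explicitly as ``the following particular cases of Theorems 2.5 and 2.8 in Kim--Krylov \cite{KK}'' and used as a black box, so there is no internal proof to compare against. Your sketch is a correct reconstruction of the standard strategy --- tangential freezing of $\{a^{ij}\}$ at scale $\rho$, absorption of $b,c$ by interpolation and short-time iteration, method of continuity for existence --- and you rightly single out the $W^{2,1}_p$ estimate for leading coefficients depending only on $(x_n,t)$ as the one ingredient that cannot be manufactured and must be quoted from \cite{KK}.

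The genuine departure from the paper is part (b). The paper cites the half-space oblique-derivative theorem (Theorem 2.8 of \cite{KK}) directly, whereas you reduce (b) to (a) via the shear $y'=x'-(l'/l_n)\,x_n$, $y_n=x_n$ (which sends $l$ to $l_n e_n$, transforms the matrix to $TaT^T$ with $\lambda,\Lambda$ degraded by a factor depending on $l$, consistent with the stated dependence of $C$ on $l$) followed by even extension of $u$ across $\{y_n=0\}$ with odd reflection of $a^{in}$ ($i<n$) and of $b_n$. This does work: the odd/even reflection is conjugation of the matrix by $R=\mathrm{diag}(1,\dots,1,-1)$, so the ellipticity constants are unchanged; the jump it creates in $a^{in}$ across $\{y_n=0\}$ is admissible precisely because only measurability in $y_n$ is demanded; and the shear preserves both the $x_n$-slices (so measurability is intact) and the tangential modulus of continuity, since the shift $x'=y'+(l'/l_n)\,y_n$ is a $y_n$-dependent \emph{translation} in $x'$. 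What each approach buys: yours makes (b) a corollary of (a) with no separate boundary theory, at the cost of being tied to a flat boundary and a single constant direction $l$, which is exactly the setting here. One small imprecision worth flagging: the shear does not ``mix only tangential directions among themselves'' --- $T$ fixes each $e_i$, $i<n$, but sends $e_n$ to $e_n-\sum_{i<n}(l_i/l_n)e_i$, which has tangential components --- but the property that is actually used, namely $y_n=x_n$ together with translation-invariance of the tangential modulus of continuity, is correct, so the conclusion stands.
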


\medskip

The following lemma is essential to deduce an estimate, uniform in $r$, for the convergence of the solutions of $(P_{r,N})$ to those of  $(P_r)$. 
\begin{lemma}\label{lem:reg}
Let $F(D^2u,Du,u,x,t)$ be given as in \eqref{quasi_linear}, satisfying  \eqref{elliptic} and \eqref{Lipschitz}, and let $w$ solve $(P_r)$. Then for a given $T>0$, the following holds for $0\leq t\leq T$:
\begin{itemize}
\item[(a)] For any $0<\alpha<1$, $w(\cdot,t)$ is uniformly $C^{1,\alpha}$ in $\bar{\Omega}$ with respect to $r$;
\item[(b)] $w_t$ is bounded in $\Omega \times [0,T]$;
\item[(c)] The restriction of $w(\cdot,t)$ on $\partial\Omega$ is uniformly $C^{1,1}$ with respect to $r$.
\end{itemize}
\end{lemma}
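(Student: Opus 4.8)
The plan is to reduce all three assertions to the linear $W^{2,1}_p$ estimates of Theorem~\ref{KK}, applied to the operator obtained by freezing the quasi-linearity of $F_r$. Since $F_r$ has the form \eqref{interpolation}, when written in non-divergence form it reads $w_t=\sum q_r^{ij}(w,x,t)\,w_{ij}+b_r(Dw,w,x,t)$ with $q_r^{ij}=g\,q^{ij}+(1-g)\,a^{ij}$; note in particular that \emph{no derivatives of $g$ appear}, so the leading coefficient satisfies \eqref{elliptic} with $r$-independent constants and $b_r$ has the same linear-in-$Dw$ growth as in the original problem. The structural observation that drives everything is that $g(x)=f(r^{-1}d(x,\partial\Omega))$ depends on $x$ only through $d(x,\partial\Omega)$, so in Fermi (boundary-normal) coordinates on a fixed collar $\mathcal N$ of $\partial\Omega$ — where $y_n=d(x,\partial\Omega)$ and $y'=(y_1,\dots,y_{n-1})$ parametrize $\partial\Omega$ — one has $g=f(r^{-1}y_n)$, a function of $y_n$ alone. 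Hence, in these coordinates, the frozen coefficient $\hat a^{ij}(y,t):=q_r^{ij}(w(x,t),x,t)$ is as regular in the tangential variables $y'$ as $w$ is, with an $r$-independent modulus, while its $y_n$-dependence is merely bounded and measurable — exactly the generality tolerated by Theorem~\ref{KK}. Outside $\mathcal N$, for $r$ small, $F_r$ coincides with the fixed $r$-independent operator $F$.

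For (a) I would argue in order. First, the maximum principle gives $\|w\|_{L^\infty(\Omega\times[0,T])}\le C$ independently of $r$. Second, the standard Krylov--Safonov / De Giorgi--Nash--Moser oscillation estimates for uniformly parabolic equations with a natural (conormal) boundary condition — whose constants depend only on the ellipticity ratio, $n$, $T$, $\Omega$ and the $L^\infty$ bound on $w$, and not on any continuity of the coefficients — give $w\in C^{\beta_0}(\bar\Omega\times[0,T])$ for some $\beta_0>0$, uniformly in $r$. Third, with this in hand the frozen coefficients $\hat a^{ij}$ have an $r$-independent tangential modulus of continuity; localizing $\bar\Omega\times[0,T]$ by a finite ($r$-independent) partition of unity, flattening $\partial\Omega$, freezing the oblique vector $A\vec\nu$ (smooth and $r$-independent, since $A$ does not depend on $w$) at the center of each boundary chart and treating its variation as a perturbation, moving all of $b_r$ to the right-hand side and absorbing it through the interpolation inequality $\|Dw\|_{L^p}\le\delta\|D^2w\|_{L^p}+C_\delta\|w\|_{L^p}$, Theorem~\ref{KK}(a) and (b) yield $\|w\|_{W^{2,1}_p(\Omega\times[0,T])}\le C$ for every $p<\infty$ with $C$ independent of $r$. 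The parabolic embedding $W^{2,1}_p\hookrightarrow C^{1+\alpha,(1+\alpha)/2}$ for $p>n+2$, $\alpha=1-(n+2)/p$, then gives (a) for every $\alpha<1$.

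For (b), I would differentiate $(P_r)$ in $t$: $w_t$ solves a linear uniformly parabolic equation whose coefficients are controlled in $L^p$ by (a) together with $D^2w\in L^p$, while differentiating $A\nabla w\cdot\vec\nu=0$ in $t$ gives $A\nabla w_t\cdot\vec\nu=-A_t\nabla w\cdot\vec\nu$, bounded by (a); since $w_t(\cdot,0)=F_r(D^2u_0,Du_0,u_0,x,0)$ is bounded, the maximum principle propagates the bound, giving $\|w_t\|_{L^\infty(\Omega\times[0,T])}\le C$ uniformly in $r$. For (c), I would apply tangential difference quotients $\Delta^\tau_h w$ along the level sets of $d$ in $\mathcal N$ (legitimate now that $w\in W^{2,1}_p$): because in Fermi coordinates the tangential $x$-dependence of $q_r^{ij}$ and $b$ is $r$-independent (the rough factor $g$ being constant in those directions), $\Delta^\tau_h w$ solves an equation of the same type with right-hand side bounded in $L^p$ uniformly in $h$ — the term $(\Delta^\tau_h q_r^{ij})\,w_{ij}$ because $\Delta^\tau_h q_r^{ij}$ is $r$- and $h$-uniformly bounded and $w_{ij}\in L^p$, the term $\Delta^\tau_h b_r$ because $b$ is Lipschitz and $\Delta^\tau_h Dw$ is a difference quotient of $Dw\in W^{1,p}$ — and with oblique boundary data $-(\Delta^\tau_h(A\vec\nu))\cdot Dw(\cdot+h\tau)$ bounded by (a). Theorem~\ref{KK}(b) then gives $\|\Delta^\tau_h w\|_{W^{2,1}_p}\le C$ uniformly in $h$, so $\partial_\tau w\in W^{2,1}_p$ near $\partial\Omega$ for every tangential $\tau$; by the embedding $W^{2,1}_p\hookrightarrow C^{1,\gamma}$ ($p$ large), $\partial_j\partial_\tau w$ is bounded near $\partial\Omega$ for all $j$, so the tangential derivatives of $\nabla w|_{\partial\Omega}$ are bounded, i.e.\ $w(\cdot,t)|_{\partial\Omega}\in C^{1,1}(\partial\Omega)$ uniformly in $r$.

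The hard part will be the third step above: extracting a genuinely $r$-independent $W^{2,1}_p$ bound from Theorem~\ref{KK}. Everything hinges on the geometric fact that the $O(r^{-1})$-varying part of the coefficients is a function of $d(x,\partial\Omega)$ alone, so that it occupies exactly the one direction in which the Kim--Krylov estimate permits mere measurability; one must verify that the passage to Fermi coordinates, the flattening of $\partial\Omega$, the finite partition of unity and the perturbation argument for the variable oblique vector all contribute only $r$-independent constants, and that the bootstrap for the quasi-linear term $b$ closes via interpolation. A secondary and more routine point is that Theorem~\ref{KK} is stated for zero initial data, so to reach $t=0$ one subtracts a fixed smooth extension of $u_0$ compatible with the boundary condition (harmless for the eventual application, where $u_0$ may be regularized) or else works on $[\tau,T]$ and treats the layer near $t=0$ separately.
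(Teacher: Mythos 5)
Your strategy for parts (a) and (b) is essentially the paper's: reduce to the linear $W^{2,1}_p$ theory of Theorem~\ref{KK}, exploiting the key structural fact that the $O(r^{-1})$-oscillating factor $g$ depends only on $d(x,\partial\Omega)$ and therefore lands in the one direction where Kim--Krylov tolerates mere measurability, and then (for (b)) differentiate the equation in $t$ and use $W^{2,1}_p$ control of the coefficients. One correction to (b): the zeroth-order coefficient $B$ and the forcing $C$ in the equation for $w_t$ are only in $L^p$, not $L^\infty$, so the classical maximum principle does not apply; you need the Aleksandrov--Krylov-type estimate with $L^{n+1}$ coefficients, which is exactly what the paper invokes from \cite{Krylov}. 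Also, for the quasi-linear operator, the paper closes (a) via a Schauder fixed-point map $\Psi:W^{1,0}_p\to W^{2,1}_p$ rather than an a priori Krylov--Safonov Hölder bound for conormal problems as you propose; both are reasonable, though a boundary Krylov--Safonov estimate for variable oblique data is itself a nontrivial input that would need a citation.

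For part (c) you take a genuinely different route from the paper, and this is where there is a real gap. The paper proves the pointwise second-order expansion \eqref{parabola} by constructing an explicit Pucci-type quadratic barrier $\tilde h$ in a thin strip $\Sigma$ near the boundary and comparing it with $w$; this argument uses only (a) and (b), never differentiates the equation, and never needs an inhomogeneous version of Theorem~\ref{KK}. Your tangential difference-quotient argument instead requires (i) an inhomogeneous oblique-boundary version of Theorem~\ref{KK}(b) (the theorem as used in the paper is stated only for $l\cdot Du=0$), and (ii) control of the boundary trace $G_h=-(\Delta^\tau_h(A\vec\nu))\cdot Dw(\cdot+h\tau)$ in the parabolic fractional trace space $W^{1-1/p,(1-1/p)/2}_p(\partial\Omega\times[0,T])$ that the inhomogeneous estimate demands. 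This is more than (a) and (b) give on the boundary: (a) is only the \emph{spatial} $C^{1,\alpha}$ modulus of $w(\cdot,t)$ and (b) is only $L^\infty$ of $w_t$, and neither directly gives the mixed space-time fractional Sobolev regularity of $Dw|_{\partial\Omega\times[0,T]}$ that the trace theorem wants. One can likely patch this (e.g.\ by interpolating the uniform $W^{2,1}_p$ bound), but as written the proposal assumes it. The paper's barrier construction avoids all of this machinery and closes (c) with only the ellipticity bound $\mathcal{P}^+(D^2h)\le -2\Lambda C\delta^{\alpha-1}$, the Lipschitz structure \eqref{Lipschitz}, and the coarse inputs (a)--(b), which is a shorter and more robust path.
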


\medskip

\begin{proof}

0.  In this proof $C$ denotes various constants which are independent of $r$. Since $w$  is $C^2$ up to $\bar{\Omega}\times (0,T]$, it suffices to get a uniform bound on the derivatives of $w$ with respect to $r$.

\medskip

1. Let us first consider the case when $\vec{v}$ is constant and the domain is a half space, i.e. when 
\[\Omega =H =\{x=(x_1,...,x_n): x_n \geq 0\}.\]
In this case, with compactly supported $u_0$, Theorem~\ref{KK}(b) as well as Morrey's inequality  yield (a) for linear PDEs. For $F$ given as in \eqref{quasi_linear} one can use Schauder's fixed point theorem with the map $\Psi: W^{1,0}_p(\bar{\Omega}_T) \to W^{2,1}_p(\bar{\Omega}_T)$ with sufficiently large $p$, where $u:= \Psi(v)$ solves 
$$
\left\{\begin{array}{lll}
u_t - \sum q^{ij} (v,x,t)u_{x_ix_j} +b(Dv,v,x,t) = 0&\hbox{ in }& \Omega \times (0,T];\\ \\
\vec{v}\cdot Du = 0 &\hbox{ on }& \partial\Omega\times (0,T);\\ \\
u(x,0) = u_0(x) \in C^2(\bar{\Omega}).&&
\end{array}\right.
$$

The argument for smooth, non-constant $\vec{v}$ is parallel to the constant case, which relies on introducing a local change of coordinates to change the problem to a Neumann problem, as written in the proof of Theorem 2.8 in \cite{KK}.  Also, see Remark 2.10 in \cite{KK2}.

\medskip

To generalize from $H$ to $\Omega$, we can apply a local change of coordinates such as in \cite{evans} p. 337-339, which maps $\{x: d(x,\partial\Omega)=r\}$ to $\{x_n=r\}$, to reduce to the half-space case.

\medskip

2.  Note that $W := w_t$ satisfies 
\begin{align*}
\left\{\begin{array}{lll}
W_t -\sum_{i,j} a^{ij}_r(w,x,t)W_{ij} + \partial_p b^r(Dw,w,x,t)\cdot DW + B(x,t)W + C(x,t) = 0 &\hbox{ in }& \Omega \times (0,T];\\ \\
(A(x,t) DW)\cdot \vec \nu = -(A_t(x,t)Dw)\cdot \vec \nu \quad\hbox{ on } \partial\Omega  &\hbox{ on }& \partial\Omega\times (0,T).\\ \\
\end{array}\right.
\end{align*}
We write $a^{ij}_r := g(x)q^{ij} + (1-g(x))a^{ij}$ as the second order matrix of $F_r$ and likewise for $b^r$. Here
 $$
B(x,t) := \sum_{i,j} \partial_z a_r^{ij}(w,x,t)w_{ij}+ \partial_z b^r(Dw,w,x,t),
$$
and 
$$
C(x,t) :=\sum_{i,j}(a_r^{ij})_t (w,x,t) w_{ij}+b^r_t(Dw,w,x,t).
$$
Due to the uniform $W^{2,1}_p$ estimates obtained in step 1, we have that $B$ and $C$ are uniformly bounded with respect to $r$ in $L^p(\Omega\times[0,T])$ for any $2<p<\infty$.  Setting $p=n+1$, \cite{Krylov} yields the uniform $L^\infty$ bound for $W$.
\medskip

3. It remains to show (c). For simplicity, we will only show (c) in the case that $\Omega$ is locally a half space, that is, we will show (c) in $B_{1/2}(0)$ when 
\begin{equation}\label{half_space} 
\Omega\cap B_1(x_0) = \{x\cdot e_1\geq 0\}\cap B_1(0).
\end{equation}
 For  general domains one can take a local change of coordinates as before  to reduce to the half-space case.
 
 \medskip
 
 Let  us choose  a boundary point $x_0\in\partial\Omega \cap B_{1/2}(0)$ and a time $t=t_0$. To show (c), it is enough to show that there exists $M>0$ which is independent of $r$ such that for $x\in\partial\Omega$ and $t\leq t_0$, we have
\begin{equation}\label{parabola}
| w(x,t) - w(x_0,t_0) - Dw(x_0,t_0)\cdot(x-x_0) |\leq M(x-x_0)^2-C(t-t_0)
\end{equation}
in $S:= (\partial\Omega \cap B_{\delta}(x_0)) \times [t_0-\tau,t_0]$. To this end we will build a supersolution and a subsolution of $(P_r)$ and compare it with $w$ in a parabolic neighborhood of $S$. 
\medskip

 We first construct a supersolution of $(P_r)$ to show that in $S$,
\begin{equation}\label{parabola1}
w(x,t) - w(x_0,t_0) - Dw(x_0,t_0)\cdot(x-x_0) \leq M(x-x_0)^2-C(t-t_0).
\end{equation}
Next, fix $\alpha \in (0,1)$. Due to (a) and (b) we know that 
\begin{equation}\label{prelim}
w(x,t) \leq w(x_0,t_0) +  Dw(x_0,t_0)\cdot (x-x_0)+C|x-x_0|^{1+\alpha}-C(t-t_0)
\end{equation} 
in $(B_{\delta}(x_0) \cap \bar{\Omega})\times [t_0-\tau,t_0]$, where $C, \delta$, and $\tau$ are constants  independent from the choice of $r$.
 
 \medskip
 
 Let us denote $(x-x_0)_T:= (x-x_0)-(x-x_0)\cdot e_1$, and consider the function   
 $$
 h(x): = C\delta^{\alpha-1}\left(|(x-x_0)_T|^2  - \frac{\Lambda}{\lambda}(n+1)|(x-x_0)\cdot e_1|^2\right) - C_1\delta^{\alpha} (x-x_0)\cdot e_1 + C C_1\delta^{\alpha} a,
 $$
where $C_1$ is a constant depending only on $A(x,t)$ and $\norm{Dw}_{C^\alpha}$, and $a$ is a constant that is much smaller than $\delta$.  We work on $\Sigma$, a strip neighborhood of $x_0$ which is narrow in the direction of $-e_1 = \vec \nu$:
$$
\Sigma:=B_{\delta}(x_0) \cap\{|(x-x_0)\cdot \vec \nu| \leq a\}\cap \Omega.
$$
Now let us define
$$
 \tilde{h}(x,t):= h(x) +w(x_0,t_0)+ Dw(x_0,t_0)\cdot(x-x_0) -C_2(t-t_0)
 $$ 
where $C_2$ is to be chosen later.  We claim that $\tilde{h}$ satisfies 
\begin{equation}\label{parabolic}
\tilde{h}_t - F_r(D^2\tilde{h}, D\tilde{h},\tilde h, x, t) \geq 0 \hbox{ in } \Sigma\times [t_0-\tau, t_0] 
\end{equation}
if $\delta$ and $\tau$ are chosen small enough, but independently of $r$.  To see this, observe that
$$
\mathcal{P}^+(D^2h) \leq - 2 \Lambda C\delta^{\alpha -1}.
$$
This fact along with \eqref{elliptic} and \eqref{Lipschitz} yields 
 $$
\begin{array}{lll}
\tilde{h}_t - F_r (D^2\tilde{h}, D\tilde{h}, \tilde{h},x,t)  &\geq& -C_2 + g(x)b(D\tilde{h}, \tilde h,x,t) +2 \Lambda C\delta^{\alpha-1} \\ 
 &\geq & 2\Lambda C\delta^{\alpha-1} -O(|D\tilde{h}| +|\tilde{h}| + |x-x_0| + |t-t_0|).
 \end{array}
$$ 
From the definition of $\tilde{h}$ one can check that, in $\Sigma\times [t_0-\tau,t_0]$ with small $\tau$, $|\tilde{h}| \leq C$ and
\begin{equation}\label{gradient}
 |Dh(x)| \leq 2 C \delta^\alpha + 2\frac{\Lambda}{\lambda} C\delta^{\alpha-1} a - C_1 \delta^\alpha \leq C\delta^{\alpha}.
\end{equation}
Hence due to (a),  $|D\tilde h| \leq |Dw| + |Dh|\le C+C\delta^{\alpha}$, and thus $|D\tilde{h}|, |\tilde{h}|\leq C$.  Thus we conclude that $\tilde{h}$ satisfies \eqref{parabolic} if $\delta$ and $\tau$ are sufficiently small.  

\medskip

Moreover, due to \eqref{oblique} we have 
$$
\begin{array}{lll}
[A(x,t)D\tilde{h}(x,t)]\cdot (-e_1) &=& [A(x,t) (Dh + Dw(x_0,t_0))] \cdot (-e_1) \\ \\
&= & [A(x,t) (Dh + Dw(x,t) + [Dw(x_0,t_0) - Dw(x,t)])] \cdot (-e_1) \\ \\
& \ge & [A(x,t) (C \delta^{\alpha - 1} (x-x_0) - C_1 \delta^\alpha e_1) ] \cdot (-e_1) - \norm{A} \norm{Dw}_{C^\alpha}\\ \\
&\ge & C_1 \delta^{\alpha} \lambda -  \norm{A} \norm{Dw}_{C^\alpha} - \norm{A}{C \delta^\alpha} \\ \\
&\geq  & 0 \hbox{ on } (\partial\Omega\cap \Sigma)\times [t_0-\tau, t_0],
\end{array}
$$
where the last inequality holds if $C_1$ is chosen sufficiently large with respect to $\|A\|$ and $\norm{Dw}_{C^\alpha}$.

\medskip

The above arguments let us conclude that $\tilde{h}$ is a supersolution of $(P_r)$ in $\Sigma\times [t_0-\tau,t_0]$.  Now let us compare $w$ and $h$ on the parabolic boundary of the domain $\Sigma\times [t_0-\tau,t_0]$ excluding $\partial\Omega$.  Observe that on $(\partial\Sigma \backslash \partial\Omega) \times [t_0-\tau,t_0]$, due to \eqref{prelim} we have
\begin{align*}
\tilde{h}(x,t) -w(x,t)&\geq C\delta^{\alpha-1}|(x-x_0)_T|^2 - C|x-x_0|^{1+\alpha}  - C_1\delta^{\alpha}|(x-x_0)\cdot e_1| +  \\
&+  C\delta^{\alpha}a- C\delta^{\alpha - 1}\Lambda (n+1) |(x-x_0)\cdot e_1|^2 / \lambda   \\ 
  &\geq 0,
  \end{align*}
since $|(x-x_0)_T| \sim \delta$ and $|(x-x_0)\cdot e_1| \leq a$ on $\partial\Sigma \backslash \partial\Omega$. Moreover
$$
\tilde{h} - w(x,t) \geq C_2\tau -C\delta^{\alpha} \geq 0 \hbox{ on } \Sigma \cap\{t=t_0-\tau\},
$$
if $\tau$ is chosen to be larger than $\delta^{\alpha}$ and if $C_2$ is sufficiently large. 
Therefore, we conclude that $w\leq \tilde{h}$ in $\Sigma \times [t_0-\tau,t_0]$, which yields \eqref{parabola1}. 

\medskip

A parallel argument can be used to provide the lower bound.

\end{proof}
Let us point out that  the barrier constructed in the proof of  Theorem~\ref{thm:neumannapprox2dthm} only relied on the $C^{1,1}$ spacial bounds of $u$  and $L^\infty$ bounds of $u_t$ restricted to  $\partial\Omega$ to get the bounds given in Equations~\eqref{eqn:ubds0}-\eqref{eqn:ubds2}. This is because the supersolution constructed in Theorem 2.1 was built off the behavior of the true solution $u$ along the boundary. In addition, the space-time $C^1$ bound on $u$ lets us use Taylor series to show that this supersolution in fact has the right ordering against $u_\epsilon$ at the boundary. Thus Lemma~\ref{lem:reg} lets us create a supersolution extension on the full domain that is uniformly close to $w$ with respect to $r$, by taking an infimum of the candidate function and the (perturbed) true solution, which cross at $\partial \Omega$.   We can then apply the comparison principle to $\tilde{v}$ and $w$ as before and conclude:

\begin{theorem}\label{thm_1}
Let $\tilde{v}$ and $w$  respectively solve $(P_{r,N})$ and $(P_r)$. Then for any $T>0$ we have 
\begin{equation}\label{convergence_111}
\sup_{\Omega \times [0,T]} |\tilde{v} -w| \leq C(\Omega, \Lambda,\lambda, T) N^{-1/3}
\end{equation}
\end{theorem}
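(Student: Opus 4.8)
The plan is to transplant the barrier construction of Theorem~\ref{thm:neumannapprox2dthm} to the present setting, with $w$ (the solution of $(P_r)$) playing the role of the true solution, $(P_r)$ that of $(D)$, and $(P_{r,N})$ that of $(D_N)$. This works because $F_r$ coincides with the divergence operator $\nabla\cdot(A\nabla\cdot)$ on all of $\R^n\setminus\Omega_r$ --- in particular on $\Omega'\setminus\Omega$, where the barriers $\varphi=fe^{-N\Phi}$ and $\eta$ of Section~\ref{sec:2d} are built --- so there the transformation of the equation and the supersolution computation are word for word those of Section~\ref{sec:2d}. The only facts about the true solution that this computation uses are the bounds \eqref{eqn:ubds0}--\eqref{eqn:ubds2}, which involve solely the perturbed solution, its first derivatives, its tangential second derivatives along $\partial\Omega$, and its time derivative along $\partial\Omega$; these are supplied uniformly in $r$ by parts (c), (a) and (b) of Lemma~\ref{lem:reg}. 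Consequently every constant entering the construction --- in particular $\alpha$ and the threshold $N=12(Cd_0+\Lambda+1)\epsilon^{-3}$ --- may be taken independent of $r$ for $\epsilon$ small, so proving $|\tilde{v}-w|\lesssim\epsilon(T+1)$ on $\Omega\times[0,T]$ and then setting $\epsilon\sim N^{-1/3}$ (compatibly with that threshold) will give \eqref{convergence_111}.

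The one genuinely new point is the choice of the perturbation, which I will call $w_\epsilon$. In Section~\ref{sec:2d} the explicit additive perturbation $u_\epsilon=u+5\alpha\epsilon\Lambda(-\tilde h+\cdots)$ is a supersolution of $(D)$ on all of $\Omega$ by a direct computation; the same computation for $(P_r)$ leaves, on $\Omega_{2r}$ where $F_r=F$, a remainder $\sum_{i,j}[q^{ij}(w+\rho)-q^{ij}(w)]w_{ij}=O(\epsilon)\,|D^2w|$ with $\rho=5\alpha\epsilon\Lambda(-\tilde h+\cdots)$, and Lemma~\ref{lem:reg} bounds $D^2w$ only in $L^{n+1}$ uniformly in $r$, so an additive perturbation would be a supersolution only for $\epsilon$ small in a way depending on $r$. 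I would instead let $w_\epsilon$ solve the PDE of $(P_r)$ in $\Omega$ with initial data $u_0$ but the slightly inflowing oblique condition $(A(x,t)\nabla w_\epsilon)\cdot\vec\nu=c_1\epsilon$ on $\partial\Omega$, with $c_1$ a fixed constant larger than $4\alpha\Lambda$, so that $c_1\epsilon$ exceeds the flux $\nabla\varphi\cdot A\vec\nu\le 4\alpha\epsilon\Lambda$ carried by $\varphi$ on $\partial\Omega$. As an exact interior solution with an oblique flux of the right sign, $w_\epsilon$ is automatically a supersolution of $(P_r)$ throughout $\Omega$, hence of $(P_{r,N})$ there since $\nabla\Phi\equiv0$ in $\Omega$; and since $c_1\epsilon$ is constant, Lemma~\ref{lem:reg} (or rather its routine extension to constant oblique data, by subtracting a fixed $O(\epsilon)$ smooth function) applies to $w_\epsilon$ and delivers the uniform inputs just mentioned. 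The subsolution uses $w_{-\epsilon}$, the solution with oblique data $-c_1\epsilon$.

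From here the argument is a transcription of Section~\ref{sec:2d}. The full supersolution is $W=\min$ of a smooth extension of $w_\epsilon$, of $\varphi$ (built with $f(x,t)=w_\epsilon(S(x,t),t)+\alpha\frac{(d(x)-\epsilon)^3+\epsilon^3}{\epsilon}+\alpha\epsilon\,d(x)$), and of $\eta$; because $\varphi=w_\epsilon$ on $\partial\Omega$ while $\varphi$'s oblique flux is strictly smaller, $W$ equals $w_\epsilon$ just inside $\partial\Omega$ and $\varphi$ just outside, so $W$ is a supersolution of $(P_{r,N})$ in a neighborhood of $\partial\Omega$ and, after patching with $\eta$ exactly as in Section~\ref{sec:2d}, on all of $\R^n$, and it dominates $v_0$ at $t=0$ after the same small adjustment of the perturbation and initial data. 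The comparison principle for $(P_{r,N})$ then gives $\tilde{v}\le W=w_\epsilon$ in $\Omega$, and the mirror-image subsolution built from $w_{-\epsilon}$ and the zero stationary solution gives $w_{-\epsilon}\le\tilde{v}$ in $\Omega$.

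The main obstacle is the remaining estimate, $\|w_{\pm\epsilon}-w\|_{L^\infty(\Omega\times[0,T])}\le C(T+1)\epsilon$ with $C$ independent of $r$. To prove it I would set $m=w_\epsilon-w$ and, using the quasilinear form of $F_r$ and a first-order Taylor expansion in the $(D^2,D,\cdot)$ slots, observe that $m$ solves in $\Omega$ a linear uniformly parabolic equation whose leading coefficients lie in $[Id,\Lambda Id]$, whose first-order coefficient is bounded, and whose zeroth-order coefficient $\sum_{i,j}\big(\int_0^1\partial_z a^{ij}_r(w+sm)\,ds\big)w_{ij}$ lies in $L^{n+1}(\Omega\times[0,T])$ with norm bounded uniformly in $r$ --- this is where the uniform $W^{2,1}_{n+1}$ bound on $w$ (Theorem~\ref{KK}, as in step 1 of the proof of Lemma~\ref{lem:reg}) enters --- while $m(\cdot,0)=0$ and $(A\nabla m)\cdot\vec\nu=c_1\epsilon$ on $\partial\Omega$. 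The $L^\infty$ bound of \cite{Krylov} for parabolic oblique-derivative problems with $L^{n+1}$ lower-order terms (the same estimate used in step 2 of the proof of Lemma~\ref{lem:reg}) then controls $\|m\|_{L^\infty}$ by a constant, uniform in $r$, times $\sup_{\partial\Omega\times[0,T]}|c_1\epsilon|$, giving the claim; the same bound, fed back through Theorem~\ref{KK} and the barrier argument of Lemma~\ref{lem:reg}(c), also yields the uniform $C^{1,1}$ boundary regularity of $w_\epsilon$ invoked in the first paragraph. Thus the only real work is managing the $r\downarrow0$ degeneracy of interior regularity by perturbing the boundary condition rather than the solution, and then absorbing the resulting merely-$L^{n+1}$ zeroth-order term through the maximum principle.
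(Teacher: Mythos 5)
Your structure mirrors the paper's: transplant the barrier machinery of Theorem~\ref{thm:neumannapprox2dthm} using the uniform-in-$r$ boundary regularity of Lemma~\ref{lem:reg}, noting that $F_r$ agrees with $\nabla\cdot(A\nabla\cdot)$ on $\Omega'\setminus\Omega$ so that the exterior computation is unchanged. Where you genuinely diverge from the paper is in the choice of the interior perturbation $w_\epsilon$. The paper's write-up is a single paragraph that simply asserts the perturbed true solution crossed with the candidate barrier gives the global supersolution, implicitly reusing the additive ansatz $w_\epsilon=w+5\alpha\epsilon\Lambda(-\tilde h+\cdots)$ from Section~\ref{sec:2d}. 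You correctly flag that this is not automatic in the quasilinear case: inside $\Omega_{2r}$ the remainder $\sum_{i,j}\bigl[q^{ij}(w)-q^{ij}(w+\rho)\bigr]w_{ij}$ is $O(\epsilon)|D^2w|$, and Lemma~\ref{lem:reg} gives $D^2w$ only in $L^{n+1}$ uniformly in $r$, not in $L^\infty$ (indeed a uniform $C^{2}$ bound on $w$ cannot hold because the coefficients of $F_r$ have derivatives of size $r^{-1}$ through the cutoff $g$). Your fix --- taking $w_\epsilon$ to be the exact solution of the $(P_r)$ interior PDE with the slightly inflowing oblique condition $(A\nabla w_\epsilon)\cdot\vec\nu=c_1\epsilon$, so that it is trivially a supersolution of the interior equation, and then controlling $\|w_\epsilon-w\|_{L^\infty}$ by the same Krylov-type $L^\infty$ estimate with $L^{n+1}$ zeroth-order coefficient already used in step 2 of Lemma~\ref{lem:reg} --- is a cleaner and more careful route. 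What it buys is a supersolution that works uniformly in $r$ without needing any pointwise bound on $D^2w$; what it costs is the extra well-posedness and uniform regularity statements for the inhomogeneous-flux problem (you acknowledge these as routine extensions, which they are --- subtract a fixed smooth $\chi(x,t)$ with the right co-normal flux). One minor care point: the flux ordering argument needs $DS\,A\vec\nu=0$ on $\partial\Omega$ (so that $\nabla\varphi^T A\vec\nu=4\alpha\epsilon\,\vec\nu^TA\vec\nu\le 4\alpha\epsilon\Lambda$); this is exactly as in Section~\ref{sec:2d} and holds here, so your choice $c_1>4\alpha\Lambda$ does give the needed crossing.
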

 Next we show by barrier arguments the following:
\begin{lemma}\label{lem_1}
Let $u$ solve $(P_g)$ and $w$ solve $(P_r)$. Then 
$$
 \|u - w\|_{L^\infty(\Omega \times [0,\infty))} \leq C(\Omega, u_0,  c_0, \frac{\Lambda}{\lambda})(1+ e^{Lt})r ,
 $$
where $L$ is the Lipschitz constant for $F$ given in \eqref{Lipschitz}.
\end{lemma}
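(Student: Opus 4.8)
\emph{Proof strategy.} Since $(P_r)$ has no penalizing drift, I would argue directly on $\Omega$: sandwich $w$ between a super- and a subsolution of $(P_r)$, each an $O(r)$--perturbation of $u$, and then invoke the comparison principle for $(P_r)$ (which holds for every fixed $r$). The two problems $(P_r)$ and $(P_g)$ carry the \emph{same} conormal condition $(A\nabla\,\cdot)\cdot\vec\nu=0$ on $\partial\Omega$ and the \emph{same} initial data $u_0$, and by \eqref{interpolation} they differ only on the boundary layer $\Sigma_r:=\{x\in\Omega:\ d(x,\partial\Omega)<2r\}$, where $F_r=g\,F+(1-g)\,\nabla\cdot(A\nabla\,\cdot)$. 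The identity
\[
\Psi_t-F_r(D^2\Psi,D\Psi,\Psi,x,t)\;=\;g(x)\,\bigl[\Psi_t-F(D^2\Psi,D\Psi,\Psi,x,t)\bigr]\;+\;\bigl(1-g(x)\bigr)\,\bigl[\Psi_t-\nabla\cdot(A\nabla\Psi)\bigr]
\]
then reduces the construction to producing $\Psi^{+}$ which is a supersolution of the $F$--equation on all of $\Omega$ and, in addition, a supersolution of the divergence--form equation with matrix $A$ on $\Sigma_r$ (and dually for the subsolution $\Psi^{-}$).

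I would take $\Psi^{\pm}:=u\pm \mathrm{err}$ with $\mathrm{err}=\mathrm{err}(x,t)\ge 0$ built from three pieces. (i) A conormal boundary--layer perturbation of $u$ of amplitude $\asymp r$ with strictly positive conormal derivative on $\partial\Omega$, in the spirit of the function $u_\epsilon$ in the proof of Theorem~\ref{thm:neumannapprox2dthm} with $\epsilon\asymp r$; this turns $\Psi^{\pm}$ into a strict super/subsolution of the oblique boundary condition and is what carries the $O(r)$ rate. (ii) A concave distance--function bump of amplitude $\asymp r^2$ supported in $\Sigma_r$, chosen so that its contraction against $A$ --- and against any matrix satisfying \eqref{elliptic} --- is bounded below on $\Sigma_r$ by a fixed positive multiple of $\lambda$; with the correct sign, this dominates the \emph{bounded} discrepancy $F(D^2u,Du,u,x,t)-\nabla\cdot(A\nabla u)=u_t-\nabla\cdot(A\nabla u)$ that appears when $u$ is tested against the divergence--form operator, making the second bracket above nonnegative on $\Sigma_r$. (iii) A time factor $\asymp r(1+e^{Lt})$, whose derivative $\asymp rLe^{Lt}$ absorbs the first- and zeroth--order contributions generated by the Lipschitz dependence of $b$ in \eqref{Lipschitz} (and the inhomogeneity $b(0,0,x,t)$). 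Using that $u$ solves $(P_g)$ exactly, checking the first bracket then boils down to the elementary inequality $\mathrm{err}_t\ge(\text{second--order cost of }\mathrm{err})+(\text{Lipschitz cost})$; after that one verifies the ordering of $\Psi^{\pm}$ against $w$ at $t=0$ (immediate since $\mathrm{err}\ge 0$ and $w(\cdot,0)=u_0$) and on $\partial\Omega$, and concludes $\Psi^{-}\le w\le\Psi^{+}$, i.e.\ $|u-w|\le\mathrm{err}\lesssim(1+e^{Lt})r$.

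The hard part will be the quasilinear cross term. Expanding the quasilinear form \eqref{quasi_linear} on $\Psi^{\pm}=u\pm\mathrm{err}$ inside the first bracket produces $\sum_{i,j}\bigl[q^{ij}(u\pm\mathrm{err},x,t)-q^{ij}(u,x,t)\bigr]u_{x_ix_j}$, of size $\lesssim L\,|\mathrm{err}|\,|D^2u|$, and the second bracket likewise involves $\sum_{i,j}(a^{ij}-q^{ij}(u))u_{x_ix_j}$; yet we control neither $D^2u$ up to $\{t=0\}$ (as $u_0$ is merely continuous) nor $D^2w$ uniformly in $r$ (the coefficients of $F_r$ have $x$--gradients of order $1/r$, and Lemma~\ref{lem:reg} yields only uniform $C^{1,\alpha}(\bar\Omega)$, boundedness of $w_t$, and $C^{1,1}$ of $w|_{\partial\Omega}$). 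I plan to circumvent this by carrying out the whole comparison in the viscosity sense: one doubles the space--time variables, folds the bump (ii) and the time factor (iii) into the doubled functional, and applies Ishii's lemma, so that $u$ and $w$ are never differentiated twice; writing the diffusions as $\sigma\sigma^{T}$ with $\sigma$ Lipschitz --- legitimate since \eqref{elliptic} keeps the matrices away from $0$ and \eqref{Lipschitz} makes $q$ Lipschitz --- reduces the offending terms to $C\varepsilon^{-1}\bigl(|w-u|^2+|x-y|^2\bigr)$, which tends to $0$ as the doubling parameter $\varepsilon\to 0$. This device --- by which the estimate ends up depending only on the $C^{1}$--type data of the solutions along $\partial\Omega$, exactly as in the remark following the proof of Lemma~\ref{lem:reg} --- is the one genuinely delicate point; the remainder is the barrier bookkeeping sketched above, parallel to Theorem~\ref{thm:neumannapprox2dthm}.
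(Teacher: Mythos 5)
Your barrier architecture matches the paper's almost term for term: the paper first conjugates by $e^{-Lt}$ (writing $U:=e^{-Lt}u$, $W:=e^{-Lt}w$, $G_r:=e^{-Lt}F_r(e^{Lt}\cdot)-L(\cdot)$) to make the spatial operators monotone in the zeroth-order slot, then builds $\tilde w = U + 2C_0Mr + C_0Mr\,\tilde d(x) - \lambda^{-1}M d_r^2(x)\,\chi_{\Omega\setminus\Omega_{2r}} + C_1Mrt$. The three pieces $2C_0Mr + C_0Mr\,\tilde d$, $-\lambda^{-1}Md_r^2$, and $C_1Mrt$ are exactly your (i), (ii), (iii), with your $e^{Lt}$ time factor absorbed into the conjugation. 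So on the construction you and the paper agree.

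Where you diverge is the final closure, and there your proposal over-engineers. Your worry about $D^2w$ not being controlled uniformly in $r$ is immaterial: the comparison is one-sided, between the \emph{smooth barrier} $\tilde w$ (built from $U$, $\tilde d$ and $d_r$ only, never from $w$) and the solution $W$ of $(\tilde P_r)$; for fixed $r$, $(P_r)$ has smooth coefficients, so $w$ is a classical solution and the ordinary comparison principle applies with no uniform-in-$r$ $W^{2,\infty}$ information on $w$ needed. Your worry about $D^2u$ near $t=0$ is genuine, but the paper does not reach for variable doubling or Ishii's lemma: it simply invokes classical Schauder regularity for oblique-derivative problems (the cited \cite{LT}) to get $U(\cdot,t)\in C^{2,\alpha}(\bar\Omega)$, which makes $M:=\max G_r(D^2U,DU,U,x,t)$ finite, and this is exactly where the constant's stated dependence on $u_0$ enters (the proof implicitly takes $u_0\in C^2(\bar\Omega)$, as in step~1 of the proof of Lemma~\ref{lem:reg}). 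The viscosity-solution doubling machinery you propose would indeed also work and would relax the regularity required of $u_0$, but it is a heavier and genuinely different closing argument than the paper's elementary classical comparison; it is not what is used, and given the paper's standing regularity hypotheses it is not needed.
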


\begin{proof}
1. Before we begin, we remark that we will later require that the spacial operators $F$ and $F_r$ be decreasing in the zero-th order term.  We can assume this by applying the transform $U := e^{-Lt} u, W := e^{-Lt} w$.  Then we find that $U$ satisfies
\begin{align*}
	U_t &= e^{-Lt} u_t - L U = e^{-Lt} F(D^2 u, Du, u,x,t) - LU \\
	&= e^{-Lt} F(e^{Lt} D^2 U, e^{Lt} DU, e^{Lt} U,x,t) - LU \\
	&=: G(D^2 U, DU, U,x,t),
\end{align*}
where now $G$ is still uniformly elliptic for $t \in [0,T]$ and in particular is decreasing in the $U$ argument.  Likewise, $W$ satisfies
\begin{align*}
	W_t &= e^{-Lt} F_r(e^{Lt} D^2 W, e^{Lt} DW, e^{Lt} W,x,t) - LW \\
	&=:G_r(D^2 W, DW, W,x,t).
\end{align*}
Denote the problems that $U$ and $W$ solve $(\tilde P_g)$ and $(\tilde P_r)$.  Note that $U(\cdot,t)\in C^{2,\alpha}(\bar{\Omega})$ due to  \cite{LT}. 

\medskip

2. Let us define  
$$
M:= \max_{0<r<1,x\in\Omega\backslash \Omega_{2r}, 0\leq t\leq T} (\max(G_r(D^2U, DU,U,x,t),1)),
$$
which is independent of $r$ and finite due to the regularity of $U$.
Let $d_r(x)$ denote the distance function $d(x,\Omega_{2r})$ and its smooth extension by $\tilde{d}(x)$, where $|\tilde{d}| \leq 1$ and $\tilde{d}(x)=d_r(x)$ in a small neighborhood of $\Omega \backslash \Omega_{2r}$.  Let 
$$
C_0 := 2\frac{\sup_{\partial\Omega \times [0,T]}|\vec v(x,t)|}{\lambda c_0},
$$ where $c_0$ is given in \eqref{oblique}. Now consider 
$$
w_2(x,t):=U(x,t)+ 2C_0Mr + C_0Mr\tilde{d}(x)\hbox{ in } \Omega.
$$
 
   \medskip

Note that on any level set of $d_r$ in $\Omega\backslash \Omega_{2r}$ the sum of the tangential second derivatives of $d_r$ amounts to the mean curvature of its level set and the normal second derivative of $d_r$ is zero. Thus, due to \eqref{Lipschitz}, given that $r$ is small enough, 
\begin{equation}\label{second}
G_r(D^2 w_2, Dw_2,w_2,x,t) \leq G_r(D^2U, DU, U,x,t) + O(Mr[\|D^2\tilde{d}\|_{L^\infty}+\|D\tilde{d}\|_{L^\infty}+1]) \hbox{ in } \Omega \backslash \Omega_{2r}.
\end{equation}

From \eqref{second} and the fact that $Dd_r = \vec \nu(x) +O(r)$ on $\partial\Omega$ we deduce that 
$$
\left\{\begin{array}{lll}
(w_2)_t - G(D^2w_2,Dw_2, w_2,x,t) \geq -C_1Mr, &\hbox{ in } &\Omega_{2r}\times (0,T]; \\ \\
(w_2)_t - G_r(D^2w_2, Dw_2,w_2,x,t) \geq -M-C_1Mr &\hbox{ in }&(\Omega \backslash \Omega_{2r})\times (0,T];\\ \\
\nabla w_2 \cdot \vec{v}(x,t) \geq 2 \sup |\vec v| Mr/\lambda &\hbox{ on }&\partial\Omega\times (0,T];\\ \\
w_2(x,0)=U(x,0) + 2C_0Mr+C_0Mr\tilde{d}(x) \geq U(x,0)+C_0Mr &\hbox{ in }& \Omega.
\end{array}\right.
$$
Here $C_1$ is a constant independent of $r$. Now define 
$$
h(x):= Md_r^2(x).
$$ 
We will show that the function defined by 
\begin{equation}\label{barrier2}
\tilde{w}(\cdot,t):= \left\{\begin{array}{lll}
w_2 + C_1M rt &\hbox{ in }& \Omega_{2r}\\
w_2 -\lambda^{-1}h+C_1Mrt& \hbox{ in } &\Omega \backslash \Omega_{2r}
\end{array}\right.
\end{equation}
 is a supersolution of $(\tilde P_r)$ if $r$ is sufficiently small. To this end we develop estimates on $h$. Note that $D^2 d_r$ is bounded in $\Omega \backslash \Omega_{2r}$ since $\partial \Omega_r$ is $C^2$ for $r$ small. From these facts and that $d_r \leq 2r$ in $\Omega \backslash\Omega_r$, it follows that 
$$
D^2(d_r^2) = 2d_r D^2 d_r + 2Dd_r (Dd_r)^T \geq 2Dd_r(Dd_r)^T - O(r)(Id_{n\times n}).
$$
Thus since $|Dd_r| = 1$, we have
$$
(D^2(d_r^2))^ + \ge 2-O(r) \hbox{ and } (D^2(d_r^2))^- \leq O(r).
$$
It follows from the uniform ellipticity of $G_r$ with respect to $r$ that $h$ satisfies
$$
\left\{
\begin{array}{lll}
\mathcal{P}^{+}(D^2h) \geq 2\lambda M- \Lambda O(r) &\hbox{ in }& \Omega \backslash \Omega_{2r};\\ \\
|Dh|  = M |Dd_r| d_r \leq 2Mr   &\hbox{ on }& \partial\Omega ;\\ \\
0\leq h\leq 16Mr^2&\hbox{ in }& \Omega \backslash \Omega_{2r}.
\end{array}\right.
$$
Then since $G_r$ is decreasing in the zero-th term, we find that in $\Omega_{2r}$,
\begin{align*}
	\tilde w_t &= C_1 M r + (w_2)_t \ge G_r(D^2 w_2, D w_2, w_2,x, t)\\
	 &\ge G_r(D^2 \tilde w, D \tilde w, \tilde w, x , t).
\end{align*}
On the other hand, in $\Omega \backslash \Omega_{2r}$ we find
\begin{align*}
	\tilde w_t &= C_1 M r + (w_2)_t \ge G_r(D^2 w_2, D w_2, w_2,x, t) - M \\
 &\ge G_r(D^2 (w_2 - \lambda^{-1} h), D w_2, w_2, x, t) + 2M - M - O(r) \\
 &\ge G_r(D^2 \tilde w, D \tilde w, \tilde w, x, t) + M - O(r) .
\end{align*}
Next, 
 \[
 (Dw_2 -\lambda^{-1} Dh) \cdot \vec v(x,t) \ge 2\lambda^{-1} \sup|\vec v| Mr-2Mr |\vec v|\lambda^{-1}  \ge 0.
 \]
Thus $\tilde w$ is a supersolution of $(\tilde P_r)$ in $\Omega \times (0,T]$ if $r$ is small.
Moreover, if $r$ is sufficiently small,
$$
\tilde{w}(x,0) \geq U(x,0)+ Mr - 16\lambda^{-1}Mr^2 \geq U (x,0) = W(x, 0).
$$
Thus it follows from the comparison principle for solutions of $(\tilde P_r)$ that 
$$
W\leq \tilde{w}\hbox{ in } \Omega \times [0,T],
$$
Then computing
\begin{align*}
	e^{-Lt} w \le \tilde w = e^{-Lt} u + 2C_0 Mr + C_0 Mr\tilde d(x) - \lambda^{-1}h + C_1 M r t
\end{align*}
shows that $w\leq u+Ce^{LT}r$ in $\Omega \times [0,T]$.

A lower bound can be obtained with parallel arguments.
\end{proof}

\begin{corollary}\label{cor_1} For $\tilde v$ solving $(P_{r,N})$ and $u$ solving $(P_g)$,
$$
|\tilde{v} -u| \leq C(T)(r +N^{-1/3}) \quad\hbox{ in } \Omega \times [0,T].
$$
\end{corollary}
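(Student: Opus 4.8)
\textbf{Proof proposal for Corollary~\ref{cor_1}.} The plan is simply to chain together the two estimates already established for the intermediate problem $(P_r)$. Let $w$ denote the solution of $(P_r)$ with the same initial data $u_0$, which appears both in Theorem~\ref{thm_1} and in Lemma~\ref{lem_1}. By the triangle inequality, for $(x,t) \in \Omega \times [0,T]$ we have
$$
|\tilde v(x,t) - u(x,t)| \leq |\tilde v(x,t) - w(x,t)| + |w(x,t) - u(x,t)|.
$$
The first term on the right is controlled by Theorem~\ref{thm_1}: $\sup_{\Omega \times [0,T]} |\tilde v - w| \leq C(\Omega,\Lambda,\lambda,T) N^{-1/3}$. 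The second term is controlled by Lemma~\ref{lem_1}: $\|u - w\|_{L^\infty(\Omega\times[0,\infty))} \leq C(\Omega, u_0, c_0, \Lambda/\lambda)(1 + e^{Lt}) r$, where $L$ is the Lipschitz constant of $F$ from \eqref{Lipschitz}.

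The only point needing a word is that the time-dependent factor $1 + e^{Lt}$ from Lemma~\ref{lem_1} is uniformly bounded on the interval $[0,T]$: since $0 \le t \le T$ we have $1 + e^{Lt} \le 1 + e^{LT} =: C_1(T)$, so that $|w - u| \leq C_1(T)\, C(\Omega,u_0,c_0,\Lambda/\lambda)\, r$ on $\Omega \times [0,T]$. Adding the two bounds and absorbing all constants (which depend only on $\Omega, \Lambda, \lambda, u_0, c_0, L$ and $T$) into a single constant $C(T)$ gives
$$
|\tilde v - u| \leq C(T)\bigl(r + N^{-1/3}\bigr) \quad \hbox{ in } \Omega \times [0,T],
$$
as claimed.

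There is no genuine obstacle here — the corollary is purely a bookkeeping combination of the preceding two results — so the write-up is essentially the display above, with the remark about bounding $e^{Lt}$ on the compact time interval. The substantive work has all been done in Theorem~\ref{thm_1} (the uniform-in-$r$ convergence rate of the penalized problem, resting on the uniform regularity of Lemma~\ref{lem:reg}) and in Lemma~\ref{lem_1} (the barrier comparison between $(P_g)$ and $(P_r)$).
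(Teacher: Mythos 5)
Your proof is correct and matches the paper's intent: the paper states Corollary~\ref{cor_1} without proof precisely because it is the immediate triangle-inequality combination of Theorem~\ref{thm_1} and Lemma~\ref{lem_1}, with the $e^{Lt}$ factor absorbed into $C(T)$ on the compact time interval, exactly as you write.
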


Lastly we show the following:
\begin{lemma}\label{lem_3}
 For fixed $N$, $\tilde{v}$ locally uniformly converges to $v$ in $\bar{\Omega}\times [0,\infty)$ as $r\to 0$.
 \end{lemma}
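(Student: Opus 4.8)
The plan is to argue by compactness, keeping $N$ fixed: I would first establish bounds on $\tilde v$ that are uniform in $r$, then extract a locally uniform limit along a sequence $r_k\downarrow 0$, identify that limit as a solution of $(P_N)$, and finally invoke the uniqueness part of the well-posedness of $(P_N)$ to conclude that the whole family converges. For the uniform bounds, observe that the supersolution of $(P_{r,N})$ constructed on all of $\R^n$ in the proof of Theorem~\ref{thm_1} has defining constants that are controlled by the $C^{1,1}(\partial\Omega)$ bound and the $L^\infty$ bound on the time derivative of the solution $w$ of $(P_r)$, both uniform in $r$ by Lemma~\ref{lem:reg}; this supersolution dominates $\tilde v$, and a symmetric lower barrier bounds it below. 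Hence $\|\tilde v\|_{L^\infty(\R^n\times[0,T])}\le C(N,T)$ with $|\tilde v|\le Ce^{-N\Phi}$ outside $\Omega$, with $C$ independent of $r$.

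\emph{Uniform regularity.} Since $\nabla\Phi\equiv 0$ on $\bar\Omega$, I would rewrite $(P_{r,N})$ as a linear non-divergence parabolic equation $\tilde v_t - \sum_{i,j} a_r^{ij}(x,t)\tilde v_{ij} - \hat b_r(x,t)\cdot\nabla\tilde v - \hat c_r(x,t)\tilde v = \hat f_r(x,t)$ on $\R^n$, obtained by freezing the quasilinear coefficients at $\tilde v$ (so $a_r^{ij}=g\,q^{ij}(\tilde v,x,t)+(1-g)a^{ij}$ in $\Omega$ and $a_r^{ij}=a^{ij}$ in $\Omega^c$), splitting off the linear part of $b$ via $b(p,z,x,t)=b(0,z,x,t)+\tilde b(x,t)\cdot p$ with $|\tilde b|\le L$ from \eqref{Lipschitz}, and collecting $N A\nabla\Phi$, $N\nabla\cdot(A\nabla\Phi)$, $\nabla\cdot A$ and $b(0,\tilde v,x,t)$ into the lower order terms and right-hand side. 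On any compact subset of $\R^n\times(0,\infty)$ these are measurable, uniformly bounded, and uniformly elliptic, independently of $r$, so the Krylov--Safonov estimate yields a local $C^\alpha$ bound for $\tilde v$ on $\R^n\times(0,\infty)$ uniform in $r$, and a standard barrier at $t=0$ (using the continuity of $v_0$ in \eqref{eqn:v0}) carries the equicontinuity down to $t=0$. Feeding this modulus back in, after flattening $\partial\Omega$ the frozen coefficients $a_r^{ij}$ are continuous in the tangential variables with a modulus independent of $r$ and measurable in the normal variable, so Theorem~\ref{KK}(a) applied with a cutoff upgrades this to a local $W^{2,1}_p$ bound for every $p<\infty$, again uniform in $r$ and valid across $\partial\Omega$.

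\emph{Passage to the limit and conclusion.} By Arzel\`a--Ascoli and a diagonal extraction, any sequence $r_k\downarrow 0$ has a subsequence along which $\tilde v_{r_k}\to\bar v$ locally uniformly on $\R^n\times[0,\infty)$, with $\nabla\tilde v_{r_k}\to\nabla\bar v$ in $L^q_{loc}$ and $D^2\tilde v_{r_k}\rightharpoonup D^2\bar v$ weakly in $L^p_{loc}$. Since $g=g_{r_k}\to 1$ a.e.\ as $r_k\downarrow 0$, the coefficients $a_{r_k}^{ij}$ converge a.e.\ to the second-order coefficients of the discontinuously extended $F$, whose only discontinuity is across $\partial\Omega$; then, using that the product of a bounded, a.e.-convergent sequence with a weakly convergent one converges weakly, that $b(\nabla\tilde v_{r_k},\tilde v_{r_k},x,t)\to b(\nabla\bar v,\bar v,x,t)$ in $L^q_{loc}$, and that the $(1-g_{r_k})$ contributions vanish in $L^1_{loc}$, one passes to the limit in the equation and finds that $\bar v$ is a bounded $W^{2,1}_{p,loc}$ solution of $(P_N)$ with $\bar v(\cdot,0)=v_0$ and $|\bar v|\le Ce^{-N\Phi}$ at infinity. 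By the uniqueness part of the well-posedness of $(P_N)$ (Theorem~\ref{KK}(a)), $\bar v=v$. Since the limit does not depend on the chosen subsequence, $\tilde v$ converges to $v$ locally uniformly on $\bar\Omega\times[0,\infty)$ as $r\to 0$.

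\emph{Main obstacle.} The hard part will be the passage to the limit across $\partial\Omega$: the interpolation layer $\Omega\setminus\Omega_{2r}$ degenerates onto $\partial\Omega$ and the frozen coefficients depend on $r$, so a uniform $W^{2,1}_p$ estimate up to $\partial\Omega$ can only be extracted from Theorem~\ref{KK} after the preliminary Krylov--Safonov bound has controlled the oscillation of $q^{ij}(\tilde v,\cdot,\cdot)$, and the limiting operator is the genuinely discontinuous $F$, which forces one to use that the discontinuity is concentrated on the single hypersurface $\partial\Omega$ so that Theorem~\ref{KK} still applies. The quasilinear dependence of the coefficients on $\tilde v$, and keeping track of the unbounded drift $N A\nabla\Phi$ at infinity (handled by the exponential barriers), add only bookkeeping once the uniform Hölder bound is in hand.
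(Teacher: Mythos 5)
Your argument is correct in outline but takes a genuinely different route from the paper. The paper stays entirely within viscosity-solution theory: it uses only the uniform spatial $C^{1,\alpha}$ bound ($\alpha > 1/2$), argues by contradiction with a smooth test function touching the subsequential limit $v_0$ from below, and — this is the key device — perturbs the test function by a linear term of size $r^\alpha\,(x-x_r)\cdot\vec\nu_r$ while restricting to a ball of radius $\sqrt{\gamma r}$, which forces the new touching point into the slab $\{(x-x_r)\cdot\vec\nu_r\le-3r\}$. A geometric interior-ball argument then places that slab inside $\Omega_{2r}$, where $v_r$ and $v$ solve the same equation, producing an immediate contradiction without ever passing to the limit in the PDE. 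Your approach instead works in the strong/Sobolev framework: you first upgrade the Krylov--Safonov Hölder bound to a uniform $W^{2,1}_p$ bound via Theorem~\ref{KK}(a) (exploiting that, after flattening, the $r$-dependence sits in the normal variable where only measurability is required), then pass to the limit weakly in the equation using that the coefficients converge a.e.\ and are bounded, and finally invoke uniqueness. Both approaches ultimately rest on uniqueness for $(P_N)$ to identify the limit with $v$; the paper leaves this implicit as you do. What the paper's route buys is that it requires less regularity theory (no $W^{2,1}_p$, no weak compactness of $D^2\tilde v$, no care about products of weakly and a.e.-convergent sequences) and converts the delicate limit-passage across the shrinking layer $\Omega\setminus\Omega_{2r}$ into a clean geometric containment. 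What your route buys is that it is more systematic and would transfer to settings where viscosity techniques are unavailable, at the cost of the Kim--Krylov bootstrap (which must be checked to be genuinely $r$-uniform: your observation that the $r$-dependence of the interpolation factor is purely in the normal variable, with the tangential modulus of $q^{ij}(\tilde v,\cdot,\cdot)$ controlled by the $r$-uniform Krylov--Safonov bound, is exactly the point that makes this work).

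One small caution: you cite Theorem~\ref{KK}(a) directly for uniqueness of $(P_N)$, but that statement is for the linear problem with zero initial data; for the quasilinear $(P_N)$ uniqueness requires a short additional argument (a fixed-point or comparison argument on the difference of two solutions, viewed as solving a linear equation with frozen, $x_n$-measurable coefficients). The paper glosses over the same point, so this is not a gap unique to your proposal, but it should be acknowledged rather than attributed wholesale to Theorem~\ref{KK}(a).
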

 
\begin{proof}
 
 Let $v_r = \tilde{v}$ be the solution of $(P_{r,N})$ associated with $F_r$. Since $N$ is fixed, $v_r$ is uniformly $C^{1,\alpha}$ in space with $\alpha > 1/2$ and hence has a subsequential limit we denote by $v_0$. We claim $v_0$ is a viscosity supersolution of $(P_N)$; the subsolution case is analogous.  Suppose it is not, and so we can find a smooth function  $\varphi$ that crosses $v_0$ from below at some point $(x_0,t_0)$ that satisfies
 \begin{align}\label{phiest}
 \varphi_t - F(D^2 \varphi, D\varphi,x,t ) - N \nabla \cdot [ \varphi A(x,t) \nabla \Phi] < -\delta < 0,
 \end{align}
 and by smoothness of $\varphi$ and $F$ we can assume this holds in a neighborhood of $(x_0, t_0)$.  
  
  Note that we must have $x_0 \in \partial \Omega$ to not get an immediate contradiction, since otherwise $v_0$ and $v_r$ have the same equation for $r$ small. Then we can find points $(x_r, t_r) \to (x_0,t_0)$ where $\varphi - v_r$ has a local maximum with value $z_r$.  These points must all lie in $\Omega\backslash\Omega_{2r}$, as outside $v_r$ and $v_0$ satisfy the same equation.  The goal is to push the crossing point into $\Omega_{2r}$.  Using $\gamma$ as the minimial radius of interior balls of $\Omega$, let 
   \begin{align*}
\varphi_r := \varphi - \frac{\delta |x - x_r|^2}{4 \lambda n} + \delta(t-t_r)/4.
 \end{align*}  
 Then $\varphi_r$ will still be a subsolution near $(x_0, t_0)$, but now $\varphi - v_r$ has a strict local maximum at $(x_r, t_r)$.

Next, consider the region $B_{\sqrt{\gamma r}}(x_r) \times [t_r - \tau, t_r]$.  Then if $r$ and $\tau$ are small enough, this region will be contained in the region where $\varphi - v_0$ has a local maximum and \eqref{phiest} holds.  We are going to use the fact that $v_r$ is uniformly $C^{1,\alpha}$ in space, independent of $r$.  For $r$ small, $x_r$ must lie within distance $\gamma$ of $x_0$, in which case it has a unique nearest boundary point we denote by $y_r$.  For $\vec \nu_r$ the outward unit vector at $y_r$, let 
\[ 
 h(x,t) := \varphi_r(x,t) - 20Cr^{\alpha}(x-x_r)\cdot \vec \nu_r,
 \]
  where $C$ is larger than the sum of the uniform $C^{1,\alpha}$ norms of $v_r$ and $\varphi_r$.  Next, since $D\varphi_r = Dv_r$ at $(x_r,t_r)$, by the uniform $C^{1,\alpha}$ regularity of $v_r$ we have 
\begin{align*}
	(\varphi_r - v_r)(x,t_r) &= \int (D \varphi_r - D v_r)(s,t_r)\cdot ds + z_r\\
	&= \int [(D \varphi_r - D v_r)(s,t_r)-(D \varphi_r - D v_r)(x_r,t_r)]\cdot ds + z_r\\
	&\ge -\int  (\norm{v_r}_{C^{1,\alpha}}+\norm{\varphi_r}_{C^{1,\alpha}}) |s-x_r|^\alpha ds  + z_r\\
	&\ge -C |x - x_r|^{1+\alpha} + z_r.
\end{align*} 
This lets us compute that
\begin{align*}
	(h - v_r)(x_r - 5r \vec \nu_r, t_r) &\ge -5^{1+\alpha}C  r^{1+\alpha} + z_r + 20\cdot 5Cr^{1+\alpha}\\
	&\ge  z_r + 75 C r^{1+\alpha} .
\end{align*}
On the other hand, if $(x-x_r) \cdot \vec \nu_r \ge -3r$, we have
\begin{align*}
	(h - v_r)(x, t_r) &\le (\varphi_r - v_r)(x,t_r) + 20 \cdot 3\cdot Cr^{1+\alpha}  \\
	&\le z_r + 60 C r^{1+\alpha}.
\end{align*}
Thus the maximum of $h - v_r$ in $B_{\sqrt{\gamma r}}(x_r) \times [t_r - \tau, t_r]$ occurs in $\{ (x - x_r) \cdot \vec \nu_r \le - 3r\}$.  Further, if $r$ is small enough, it must occur on the parabolic interior because on the spacial edge of the parabolic boundary,
\[h - v_r \le -\frac{\gamma \delta r}{4 \lambda n } -20 C r^\alpha (x - x_r) \cdot \vec \nu_r \le -\frac{\gamma \delta r}{4 \lambda n } +40 C \gamma r^{\alpha + 1/2} \le 0.
\]
This is because $\alpha> 1/2$, and on the temporal edge, $\varphi_r$ was not modified.  
	 
Now it remains to show that this maximum occurs inside $\Omega_{2r}$.  This is because of the square root scaling we used.  That is, $y_r$ must have an interior ball $B_\gamma(x_r') \subset \Omega$ that contains $x_r$.  Then $\Omega_{2r}$ must contain $B_{\gamma - 2r}(x_r')$.  We assume for simplicity the worst case scenario where $x_r = y_r$, that is, $x_r$ is on $\partial \Omega$.  Now we show that
\begin{align*}
B_{\sqrt{\gamma r}}(x_r) \cap \{(x - x_r) \cdot \vec \nu_r \le - 3r\} \subset B_{\gamma - 2r}(x_r').
\end{align*}
This follows because the hyperplane $\{(x-x_r) \cdot \vec \nu_r \le -3r\}$ intersects $B_{\sqrt{\gamma r}}(x_r)$ with width $\sqrt{\gamma r - 9 r^2}$ and it intersects $B_{\gamma - 2r}(x_r')$ with width
\[ \sqrt{(\gamma - 2r)^2 - (\gamma - 3r)^2} = \sqrt{2r \gamma - 5 r^2},\]
as can be seen from Figure~\ref{fig:xrpic}.  This width is larger provided $r < \gamma /4$, and by the definition of $\vec \nu_r$, the hyperplane is perpendicular to the line between $x_r$ and $x_r'$.  

This entails that $\varphi_r - v_r$ has a local maximum inside $\Omega_{2r}$, where $\varphi_r$ is a subsolution of the same equation as $v_r$.  Thus we are done, since this contradicts that $v_r$ was a viscosity solution.

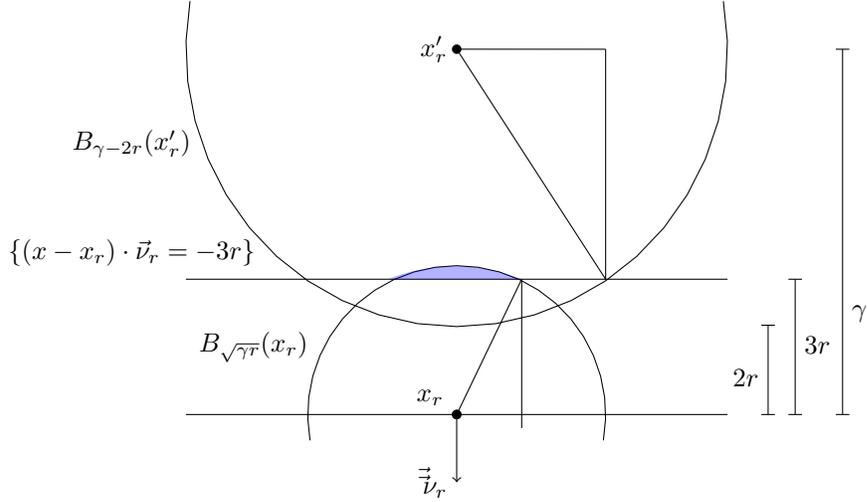
\begin{figure}
\centering
\begin{tikzpicture}[scale = 1.8]
\coordinate (x1) at (-2, 0);
\coordinate (x2) at (2, 0);
\coordinate (y1) at (-2, 1);
\coordinate (y2) at (2, 1);
\coordinate (xr) at (0,0);
\coordinate (xp) at (0,2.7);
\fill (0,0) circle [radius = 1pt];
\fill (xr) circle [radius = 1pt] node [above, xshift = -10] {$x_r$};

\draw [->] (xr) -- (0, -.5) node [left] {$\vec \vec \nu_r$};
\draw plot [domain=-10:190] ({1.1*cos(\x)}, {1.1*sin(\x)});
\draw[-] (x1) -- (x2);
\draw[-] (y1) -- (y2);
\draw[-] (2.5, 0) -- (2.5, 1) node [right, yshift = -25] {$3r$};
\draw[-] (2.45,0) -- (2.55,0);
\draw[-] (2.45,1) -- (2.55,1);

\draw[-] (2.3, 0) -- (2.3, .66) node [left, yshift = -20] {$2r$};
\draw[-] (2.35,0) -- (2.25,0);
\draw[-] (2.35,.66) -- (2.25,.66);

\draw[-] (2.85, 0) -- (2.85, 2.7) node [right, yshift = -100] {$\gamma$};
\draw[-] (2.8,0) -- (2.9,0);
\draw[-] (2.8,2.7) -- (2.9,2.7);
\draw plot [domain=170:370] ({2*cos(\x)}, {2.7+2.05*sin(\x)});
\fill (xp) circle [radius = 1pt] node [left] {$x_r'$};

\draw[-] (xr) -- (.48,1);
\draw[-] (.48,-.1) -- (.48,1);

\draw[-] (xp) -- (1.1,1);
\draw[-] (xp) -- (1.1, 2.7);
\draw[-] (1.1, 1) -- (1.1, 2.7);

\node[draw=none] at (-1.5, .5) {$B_{\sqrt{\gamma r}}(x_r)$};
\node[draw=none] at (-2.4, 2) {$B_{\gamma - 2r}(x_r')$};
\node[draw=none] at (-2.4, 1.2) {$\{(x - x_r) \cdot \vec \nu_r = -3r\}$};

\fill [blue, opacity = .3] ++(.48,1) arc (68:112:1.32) ;
\end{tikzpicture}
\caption{\label{fig:xrpic}Showing $\{(x - x_r) \cdot \vec \nu_r \le -3r\}$ is contained in $\Omega_{2r}$}
\end{figure}
\end{proof}
  
 \medskip
 
   Theorem~\ref{thm_1}, Corollary~\ref{cor_1} and Lemma~\ref{lem_3} enable us to compare $\tilde{v}$ and $v$ and conclude the following:

\begin{theorem}\label{main}
Let $v$ and $u$ respectively solve $(P_N)$ and $(P_g)$. Then we have, for any $T>0$, 
$$
\sup_{\Omega\times [0,T]}|v-u| \leq C(T)N^{-1/3}.
$$
\end{theorem}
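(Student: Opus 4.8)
The plan is to assemble the three convergence estimates summarized in the commutative diagram of Figure~\ref{commutDiag} through a limiting argument in the auxiliary parameter $r$, holding $N$ fixed throughout. The decisive feature is that Corollary~\ref{cor_1} --- which combines the $r$-uniform rate of Theorem~\ref{thm_1} with the $O(r)$ estimate of Lemma~\ref{lem_1} --- bounds $|\tilde v - u|$ on $\Omega \times [0,T]$ by $C(T)(r + N^{-1/3})$ with a constant $C(T)$ that is independent of $r$; this $r$-independence (ultimately traced to the uniform-in-$r$ regularity of Lemma~\ref{lem:reg}) is what makes the limit below harmless.

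Concretely, I would fix $N$ and $T$ and, for each $r \in (0,1)$, write $v_r$ for the solution of $(P_{r,N})$. Corollary~\ref{cor_1} gives
\begin{equation*}
|v_r(x,t) - u(x,t)| \leq C(T)\,(r + N^{-1/3}) \qquad \text{in } \Omega \times [0,T],
\end{equation*}
with $C(T)$ not depending on $r$. By Lemma~\ref{lem_3}, $v_r \to v$ locally uniformly on $\bar\Omega \times [0,\infty)$ as $r \downarrow 0$; restricting to the compact set $\bar\Omega \times [0,T]$ this is uniform convergence, so for every $(x,t) \in \Omega \times [0,T]$,
\begin{equation*}
|v(x,t) - u(x,t)| = \lim_{r \downarrow 0} |v_r(x,t) - u(x,t)| \leq \lim_{r \downarrow 0} C(T)\,(r + N^{-1/3}) = C(T)\,N^{-1/3}.
\end{equation*}
Taking the supremum over $\Omega \times [0,T]$ gives the theorem, with $C(T)$ depending only on $n, \lambda, \Lambda, T$ and the regularity of the coefficients and of $\Omega$.

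I do not expect a genuine obstacle in this concluding step: all the hard analysis has been front-loaded into Theorem~\ref{thm_1}, Lemma~\ref{lem_1}, and Lemma~\ref{lem_3}. The only points needing a word of care are that the convergence in Lemma~\ref{lem_3}, a priori merely local, is uniform on the compact slab $\bar\Omega \times [0,T]$; and that it is the whole family $\{v_r\}$, not just a subsequence, that converges --- which holds because the subsequential limits produced in Lemma~\ref{lem_3} are all identified with the unique solution $v$ of $(P_N)$ via the well-posedness in Theorem~\ref{KK}(a). Equivalently, one may avoid passing to a limit altogether and instead observe that the estimate of Corollary~\ref{cor_1} is stable under the uniform convergence $v_r \to v$; in either formulation the substance of the proof lies entirely in the preceding results, the linchpin being the $r$-uniformity of the constant.
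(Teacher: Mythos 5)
Your proof is correct and follows the same route the paper intends: the paper leaves this concluding step essentially implicit, remarking only that Theorem~\ref{thm_1}, Corollary~\ref{cor_1}, and Lemma~\ref{lem_3} ``enable us to compare $\tilde v$ and $v$ and conclude the following.'' Your write-up supplies exactly the missing details --- fixing $N$, invoking the $r$-independence of the constant in Corollary~\ref{cor_1}, and sending $r \downarrow 0$ via Lemma~\ref{lem_3} --- and your side remark that the full family (not merely a subsequence) converges, because all subsequential limits are identified with the unique solution of $(P_N)$, is the right way to close the small gap in Lemma~\ref{lem_3}'s compactness argument.
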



\section{Additional Remarks}\label{sec:remarks}

\subsection{Examples in one dimension} 
\label{sub:Heat Equation Example}

First we verify that the full divergence-form drift is necessary in $(P_N)$.
\begin{theorem}\label{decay}
Let $v(x,0)$ be the characteristic function of $\Omega:=[0,1]$, let $\Phi(x)=d^3(x, [0,1])$ and let $v(x,t)$ solve the following equation with initial data $v(x,0)$:
\begin{equation}\label{drift}
v_t - v_{xx} -N\Phi_x v_x = 0 \hbox{ in } \R\times (0,\infty). 
\end{equation}
Then for any $\delta > 0$, there are $N_0$ and $T_0$ that only depend on $\delta$ so that for $N > N_0, v(x,T_0) < \delta$ in $[0,1]$.
\end{theorem}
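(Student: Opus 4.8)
The plan is to sandwich $v$ between the stationary subsolution $0$ and a supersolution of the simple product form $\bar v(x,t) = e^{-\mu t}\psi(x)$, and then read off the decay on $[0,1]$ directly from the factor $e^{-\mu t}$. Since \eqref{drift} has no zeroth-order term, the maximum principle already gives $0 \le v \le 1$ (heuristically $v(x,t) = \mathbb{P}^x(X_t \in [0,1])$ for the diffusion with generator $\partial_{xx} + N\Phi_x\partial_x$, whose drift points \emph{away} from $[0,1]$), so only the upper barrier requires work. Substituting $\bar v = e^{-\mu t}\psi$ into \eqref{drift} gives
\begin{equation}\label{eq:prodsuper}
\bar v_t - \bar v_{xx} - N\Phi_x \bar v_x = -e^{-\mu t}\bigl(\psi'' + N\Phi_x \psi' + \mu \psi\bigr),
\end{equation}
so it suffices to produce $\mu > 0$ and a bounded, positive $\psi \in C^2(\R)$ with $\psi \ge 1$ on $[0,1]$ and $\psi'' + N\Phi_x \psi' + \mu\psi \le 0$ on $\R$, for all $N$ past some threshold $N_0$.

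To build $\psi$ I would exploit the two regimes separately. On $[0,1]$ we have $\Phi_x \equiv 0$, so the required inequality is just $\psi'' \le -\mu\psi$: $\psi$ must be a strictly concave bump over $[0,1]$, and in particular cannot be taken constant there — which is precisely why the non-divergence drift behaves differently from the divergence one. Off $[0,1]$ we have $\Phi_x = 3\,d(x,[0,1])^2\,\vec\nu$ with $\vec\nu = \pm1$ the outward unit normal, so if $\psi$ is chosen radially decreasing about the midpoint $\tfrac{1}{2}$ then $N\Phi_x\psi' \le 0$ there, and once $d(x,[0,1])$ is bounded below this term has size $\sim N d^2 |\psi'|$ and overwhelms $\psi'' + \mu\psi$ provided $N$ is large. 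A concrete choice realizing both features is $\psi(x) = \tfrac{5}{4}\bigl(1 + (x-\tfrac{1}{2})^2\bigr)^{-1}$, which is smooth, positive, bounded by $\tfrac{5}{4}$, equal to $1$ at $x=0$ and $x=1$ hence $\ge 1$ on $[0,1]$, concave on a neighborhood of $[0,1]$, and decaying like $|x|^{-2}$. I would then check $\psi'' + N\Phi_x\psi' + \mu\psi \le 0$ with, say, $\mu = \tfrac{1}{4}$: on $[0,1]$ it reduces to the elementary inequality $2(3y^2-1) + \mu(1+y^2)^2 \le 0$ for $|y|\le\tfrac{1}{2}$, and on $\{d(x,[0,1])>0\}$, after clearing the positive denominator, to a polynomial inequality in which the $N\Phi_x\psi'$ term dominates once $N$ exceeds an absolute constant $N_0$.

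Granting such a $\psi$, \eqref{eq:prodsuper} says $\bar v(x,t) = e^{-\mu t}\psi(x)$ is a classical supersolution of \eqref{drift} on $\R\times(0,\infty)$ with $\bar v(x,0) = \psi(x) \ge \mathbf 1_{[0,1]}(x) = v(x,0)$; the comparison principle then yields $v \le \bar v$, and since $\psi \le \psi(\tfrac{1}{2}) = \tfrac{5}{4}$ on $[0,1]$ this gives $v(x,t) \le \tfrac{5}{4} e^{-\mu t}$ for $x\in[0,1]$. Given $\delta > 0$ it remains to take $N_0$ as above and $T_0$ large enough that $\tfrac{5}{4}e^{-\mu T_0} < \delta$ (e.g. $T_0 = 1 + \mu^{-1}\log(5/(4\delta))$), which depends only on $\delta$.

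The step requiring genuine care is not the algebra but justifying $v \le \bar v$ on the whole line: the drift $N\Phi_x$ grows quadratically and points outward, so the associated diffusion can explode in finite time and comparison among arbitrary bounded solutions is not automatic. I would sidestep this by taking $v$ to be the natural solution, i.e. the increasing limit of the solutions $v_k$ of \eqref{drift} on $(-k,k)\times[0,T]$ with zero lateral data; for each $k$ the comparison principle on the bounded cylinder gives $v_k \le \bar v$ (since $v_k = 0 \le \bar v$ on $\{\pm k\}$ and $v_k(\cdot,0) = \mathbf 1_{[0,1]} \le \psi$), and letting $k\to\infty$ both identifies $v$ and gives $v \le \bar v$. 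I would also note that $\psi$ is not an eigenfunction and $\mu$ is independent of $N$, so the decay rate produced this way is not claimed to be sharp — all that is used is that the barrier construction closes once $N$ is large.
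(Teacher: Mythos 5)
Your proof is correct but follows a genuinely different route from the paper's. The paper first observes that the stationary function $e^{-N\Phi(x)}$ is a supersolution dominating the initial data, so $v \le e^{-N\Phi}$; choosing $N_0 \sim \log(1/\delta)$ makes this bound $\le \delta$ at $x=-1$ and $x=2$. Then on the slab $[-1,2]\times[0,T_0]$ the downward parabola $h(x,t) = 1 + \delta/2 - \delta(x-1/2)^2 - \delta t$, a second supersolution because $h_t - h_{xx} = \delta$ and $\Phi_x h_x \le 0$, decreases linearly in time until $v \le h = O(\delta)$ at $T_0 \sim \delta^{-1}$. Your separable barrier $e^{-\mu t}\psi(x)$ with $\psi(x) = \frac{5}{4}\bigl(1+(x-\frac{1}{2})^2\bigr)^{-1}$ does the job in one step on all of $\R$: after clearing the positive factor $\frac{5}{4}(1+y^2)^{-3}$, with $y = x-\frac{1}{2}$ and $\mu = \frac{1}{4}$, the required inequality off $[0,1]$ reads $\frac{1}{4}y^4 + \frac{13}{2}y^2 - \frac{7}{4} \le 6Ny(y-\frac{1}{2})^2(1+y^2)$ for $y \ge \frac{1}{2}$ (with the mirror statement for $y \le -\frac{1}{2}$); the left side is negative at $y=\frac{1}{2}$ and grows like $y^4$, while the right side grows like $Ny^5$, so a finite absolute $N_0$ suffices. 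The trade-off is that the paper's computation is shorter (a parabola rather than an eigenfunction-like $\psi$) but yields $N_0 \sim \log(1/\delta)$ and only linear-in-time decay ($T_0 \sim \delta^{-1}$), whereas your barrier gives $N_0$ an absolute constant and exponential decay ($T_0 \sim \log(1/\delta)$) --- a quantitatively stronger conclusion. Your explicit treatment of the comparison principle by exhaustion with $(-k,k)$ is a sound precaution, given the quadratically growing drift, that the paper leaves implicit.
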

The solution $u$ of $(H)$ with $a=0, b=1$ and $u_0 = v(x,0)$ is the stationary solution $u \equiv 1$. Thus the above theorem demonstrates in particular that $v$ does not converge to $u$ in $\R^n\times [0,T]$ as $N\to\infty$, if $T$ is chosen large.

\begin{proof}
Fix $\delta > 0$.  Note that $\phi(x):= e^{-N\Phi(x)}$ is a supersolution to \eqref{drift}, and thus $v \leq \phi$. Set $N_0$ so that $\phi(x) \leq \delta$ for $x \in \{-1, 2\}$, and let us compare $v(x,t)$ with a barrier $h(x,t)$ in $[-1, 2]\times [0,T_0]$, where  
$$
h(x,t) = 1+\delta/2- \delta (x-1/2)^2 - \delta t.
$$
Here $T_0 = \delta^{-1} - 13/4$ satisfes $h(x,T_0) = \delta$ for $x \in \{ -1, 2\}$. Since $h_t - h_{xx} \geq 0$  and $h_x \Phi_x \leq 0$, it follows that $h$ is a supersolution of \eqref{drift}, and since $h\geq \delta$ in $\{-1, 2\}\times [0,T_0]$ it follows from the comparison principle that $v \leq h$ in $[-1, 2]\times [0,T_0]$. Thus $v(x,T_0) \leq h(x,T_0)\leq 4\delta$. 

\end{proof}

 Next we show that for this penalization scheme and choice of initial data \eqref{eqn:v0}, the convergence rate of $N^{-1/3}$ given in Theorem~\ref{main:thm} is optimal.  The rate is connected to the cubic growth of $\Phi = d(x, \Omega)^3$.  The idea is that $O(N^{-1/3})$ mass leaks out as seen by the size of mass on the outside of the stationary solution $e^{-N \Phi(x)}$.  Our attempt to add additional mass in $v_0$ need not exactly cancel the mass loss, as the following example shows.
  Consider $v$ solving $(H_N)$ in $\R\times (0,\infty)$ with initial data
\begin{align}\label{eqn:cosv0}
	v_0(x) = \left\{
	     \begin{array}{lr}
			\cos(2 \pi x) + 1 & \mbox{ if } x \in [0,1] \\
			2e^{N x^3}	& \mbox{ if } x < 0\\
			2 e^{-N (x-1)^3} & \mbox{ if } x > 1.
			 \end{array}
		\right.
\end{align}
With this $v$ we have the following theorem:
 \begin{theorem}\label{error}
Let $u(x,t)$ solve $(P_g)$ in $[0,1]\times (0,\infty)$, with initial condition $u_0(x) = \cos(2 \pi x ) + 1$. Then with $v$ as above, 
Then there exists a time $T$ so that for all $N$, 
\[\sup_{[0,1]\times[0,T]}|u(x,t)-v(x,t)| \ge CN^{-1/3}.
\]
\end{theorem}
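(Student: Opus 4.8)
\emph{Proof plan.} My plan is to argue by conservation of mass. Write $\Omega=[0,1]$ and $\kappa:=\int_0^\infty e^{-s^3}\,ds\in(0,\infty)$. First I would record two exact facts. On one hand $u(x,t)=e^{-4\pi^2t}\cos(2\pi x)+1$, so $\int_0^1 u(x,t)\,dx=1$ for every $t$. On the other hand $(H_N)$ is in divergence form, $v_t=\partial_x\!\big(v_x+Nv\,\Phi_x\big)$, and $v$ sits below the barriers $f e^{-N\Phi}$ from Theorem~\ref{thm:heat}, so $v$ and $v_x$ decay faster than any polynomial at $\pm\infty$ and mass is conserved: $\int_{\mathbb R}v(x,t)\,dx=\int_{\mathbb R}v_0(x)\,dx$ for all $t$. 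Since $\Phi(x)=|x|^3$ for $x\le0$ and $\Phi(x)=|x-1|^3$ for $x\ge1$, the substitution $s=N^{1/3}y$ gives $\int_{\mathbb R\setminus[0,1]}v_0\,dx=2\cdot2\int_0^\infty e^{-Ny^3}\,dy=4\kappa N^{-1/3}$, hence $\int_{\mathbb R}v_0\,dx=1+4\kappa N^{-1/3}$. Writing $m(t):=\int_{\mathbb R\setminus[0,1]}v(x,t)\,dx$, I get $\int_0^1 v(x,t)\,dx=1+4\kappa N^{-1/3}-m(t)$, so it is enough to produce a fixed time $T>0$ and a fixed $\delta>0$ with $m(T)\le(4-\delta)\kappa N^{-1/3}$ for all large $N$: then $\int_0^1(v-u)(x,T)\,dx\ge\delta\kappa N^{-1/3}$, and since $|\Omega|=1$ this forces $\sup_{[0,1]\times[0,T]}|u-v|\ge\delta\kappa N^{-1/3}$.

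To bound $m(T)$ from above I would reuse the global supersolution $w$ from the proof of Theorem~\ref{thm:heat}, taking the relation $N=10\epsilon^{-3}$ used there, so $\epsilon\asymp N^{-1/3}$. Outside $[0,1]$ this $w$ equals $\varphi=f e^{-N\Phi}$ with, on the right, $f(x,t)=u_\epsilon(1,t)+\alpha\,\frac{(x-1-\epsilon)^3+\epsilon^3}{\epsilon}+\alpha\epsilon(x-1)$; since $(y-\epsilon)^3+\epsilon^3=y\big(y^2-3\epsilon y+3\epsilon^2\big)\ge0$ for $y\ge0$ we have $f(x,0)\ge u_\epsilon(1,0)\ge u_0(1)=2$ there, so $\varphi(\cdot,0)\ge 2e^{-N\Phi}=v_0$ outside $[0,1]$, while $w(\cdot,0)=u_\epsilon(\cdot,0)\ge u_0=v_0$ inside; hence $v\le w$ on $\mathbb R\times[0,\infty)$ by comparison. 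Then I would compute, with $y=x-1$,
\begin{align*}
\int_1^\infty\varphi(x,T)\,dx
&=u_\epsilon(1,T)\int_0^\infty e^{-Ny^3}\,dy
+\frac{\alpha}{\epsilon}\int_0^\infty\!\big[(y-\epsilon)^3+\epsilon^3\big]e^{-Ny^3}\,dy
+\alpha\epsilon\int_0^\infty\! y\,e^{-Ny^3}\,dy \\
&=u_\epsilon(1,T)\,\kappa\,N^{-1/3}+O(N^{-1}),
\end{align*}
using $0\le(y-\epsilon)^3+\epsilon^3\le y^3+3\epsilon^2y$, the scalings $\int_0^\infty y^3e^{-Ny^3}\,dy\asymp N^{-4/3}$ and $\int_0^\infty y\,e^{-Ny^3}\,dy\asymp N^{-2/3}$, and $\epsilon^{-1}\asymp N^{1/3}$, $\epsilon\asymp N^{-1/3}$. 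Since $u_\epsilon(1,T)=u(1,T)+O(\epsilon)$, this yields $\int_1^\infty v(x,T)\,dx\le u(1,T)\,\kappa N^{-1/3}+O(N^{-2/3})$, and the mirror estimate on $\{x<0\}$ gives $\int_{-\infty}^0 v(x,T)\,dx\le u(0,T)\,\kappa N^{-1/3}+O(N^{-2/3})$.

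Finally, using $u(0,T)=u(1,T)=1+e^{-4\pi^2T}$, I would add the two to get $m(T)\le 2\big(1+e^{-4\pi^2T}\big)\kappa N^{-1/3}+O(N^{-2/3})$. Fixing any $T>0$ and setting $\delta:=2\big(1-e^{-4\pi^2T}\big)>0$, for all large $N$ this is at most $(4-\tfrac{\delta}{2})\kappa N^{-1/3}$, and the reduction from the first paragraph then gives $\sup_{[0,1]\times[0,T]}|u-v|\ge\tfrac{\delta}{2}\kappa N^{-1/3}=:CN^{-1/3}$ (small $N$ being irrelevant for the optimality claim). The main difficulty is exactly the step just carried out: conservation of mass by itself is sign-neutral, so one genuinely needs an upper barrier whose exterior mass tracks the boundary trace $u(\cdot,t)|_{\partial\Omega}$, which has strictly decreased from its initial value $u_0|_{\partial\Omega}=2$ by the fixed time $T$; and the order $N^{-1/3}$ of the resulting mismatch is forced by the cubic growth of $\Phi$ through $\int_0^\infty e^{-Ny^3}\,dy\asymp N^{-1/3}$.
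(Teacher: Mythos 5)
Your proof is correct, but it takes a genuinely different route from the paper's. Both proofs use conservation of mass, but in complementary ways. The paper passes to the limit $t\to\infty$: it asserts that $u\to 1$ and $v\to Ce^{-N\Phi}$ exponentially fast with a rate that is uniform in $N$, computes the normalizing constant $C=\frac{1+4\kappa N^{-1/3}}{1+2\kappa N^{-1/3}}$ from mass conservation, concludes $v-u\to C-1\ge \kappa N^{-1/3}$ pointwise in $[0,1]$, and then backs off to a finite $T$. Your argument instead works entirely at a fixed finite time: you compute the initial mass surplus $\int_{\mathbb R}v_0-\int_0^1 u_0=4\kappa N^{-1/3}$ exactly, reduce the claim to an upper bound on the exterior mass $m(T)=\int_{\mathbb R\setminus[0,1]}v(\cdot,T)$, and obtain that bound by reusing the supersolution $w=fe^{-N\Phi}$ built in the proof of Theorem~\ref{thm:heat}. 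The driving mechanism is that the boundary trace $u_\epsilon(1,T)=u(1,T)+O(\epsilon)=1+e^{-4\pi^2T}+O(N^{-1/3})$ appearing in $f$ has strictly dropped below its initial value $2$, so the supersolution's exterior mass $m(T)\le 2(1+e^{-4\pi^2T})\kappa N^{-1/3}+O(N^{-2/3})$ falls short of the $4\kappa N^{-1/3}$ that was injected, forcing an $O(N^{-1/3})$ surplus inside $[0,1]$. The main thing your approach buys is that it sidesteps the paper's asymptotic step entirely — in particular the claim that the exponential rate of convergence to equilibrium is uniform in $N$, which the paper asserts without proof and which is not completely obvious since the potential $N\Phi$ steepens with $N$. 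Your version needs only the comparison principle, the explicit form of the barrier already constructed in Section~\ref{sec:1d}, and straightforward scaling of the integrals $\int_0^\infty y^j e^{-Ny^3}\,dy$. Both proofs establish the lower bound only for $N$ large, which is the regime that makes the theorem meaningful, and both implicitly rely on mass conservation for $(H_N)$ (justified by the super-exponential barrier decay at infinity); you flag this, the paper does not. One small bookkeeping point worth stating explicitly when you fix $T$ first: the error term in $u_\epsilon(1,T)=u(1,T)+O(\epsilon)$ has a constant proportional to $T$ through the $10\alpha\epsilon T$ term in the perturbation \eqref{approximation}/\eqref{initial:H}-style construction, so the $O(N^{-2/3})$ in your $m(T)$ estimate depends on $T$; since $T$ is fixed before $N\to\infty$ this is harmless, but it is the reason the quantifiers must be taken in the order ``fix $T$, then take $N$ large.''
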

\begin{proof}
Note that $u \to 1$ and $v \to C e^{-N \Phi}$ exponentially as $t \to \infty$, and since $\Phi$ is a uniformly convex potential except in a compact set, this rate is uniform in $N$.  By conservation of mass, we must have that
\begin{align*}
	C\left[\int_{-\infty}^0  e^{Nx^3}\;dx + 1+  \int_1^\infty e^{-N (x-1)^3} \; dx \right]= 1 + \int_{-\infty}^0 2e^{N x^3}\;dx + \int_1^\infty 2e^{-N (x-1)^3} \;dx.
\end{align*}
Then denoting $I = \int_0^\infty e^{-u^3} \; du$, we have that $\int_0^\infty e^{-Nx^3}\;dx = N^{-1/3} I$, so
\begin{align*}
	C = \frac{1 + 4N^{-1/3} I}{1+2N^{-1/3} I}.
\end{align*}
Thus since $u(x,t) \to 1$, we find that in $[0,1]$, as $t \to \infty$,
\begin{align*}
	v(x,t) - u(x,t) &\to C - 1 = \frac{1 + 4N^{-1/3} I}{1+2N^{-1/3} I}- 1 = \frac{2N^{-1/3} I }{1 +2 N^{-1/3} I} \ge N^{-1/3}I.
\end{align*}
Then we can find a time $T$, independent of $N$, so that $v(x,T) - u(x,T) > N^{-1/3}I / 2$.
\end{proof}

\subsection{Stability of the drift potential } 
\label{sub:Non-normal Phi}
Here we consider the potential whose gradient does not exactly line up with $\vec \nu$ near the boundary.  For simplicity we will restrict this to the case of the heat equation and only work in $\Omega'$; the divergence case is similar.  We will write our new drift as the old drift plus a perturbation term $\Psi$, that is, we deal with the equation 
$$\label{H'_N}
\left\{
     \begin{array}{l}
	v_t = \Delta v + N \nabla \cdot(v \nabla \Phi) + N \nabla \cdot (v \nabla \Psi) \mbox{ in } \Omega', \\
	v(x,0) = v_0(x) := \left\{
	     \begin{array}{lr}
			u_0(x)	& \mbox{ in } \Omega.\\
			  u_0(S(x)) e^{-N(\Phi(x)  + \Psi(x))}& \mbox{ in } \Omega' \backslash \Omega
			 \end{array}
	\right.
		 \end{array}
		\right.\leqno{(H'_N)}
$$
Here $S(x) = x - d(x) \nabla d(x)$ in the case $A \equiv Id$.  
\begin{theorem}\label{thm:non-normal}
Suppose $\Omega$ is $C^2$, $\Omega'$ satisfies \eqref{eqn:omegabd} and further $d(\Omega, \Omega') < 1$.  Also, suppose $|\nabla \Psi| < d(x,\Omega)^3$.  Then if $u$ solves $(D)$ with $A \equiv Id$ (that is, the heat equation), and $v$ solves $(H_N')$,  we have
\begin{align*}
	\norm{u - v}_{L^{\infty}(\Omega \times [0,T])} < C(u, \Omega)(T+1) N^{-1/3}.
\end{align*}
\end{theorem}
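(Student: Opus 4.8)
The plan is to rerun the barrier construction of Theorem~\ref{thm:heat} and Theorem~\ref{thm:neumannapprox2dthm} with the single potential $\Phi$ replaced everywhere by the combined potential $\Theta := \Phi + \Psi$, and then to check that $\Psi$ affects the governing differential inequality for the profile $f$ only through one term, which is strictly dominated by the drift term produced by $\Phi$. Since only $\nabla\Psi$ enters $(H'_N)$, we may first normalize $\Psi$ so that $\Psi = 0$ on $\partial\Omega$; then $\nabla\Psi = 0$ on $\partial\Omega$ and $|\Psi(x)| \le \tfrac14 d(x,\Omega)^4$ on $\Omega'$, so that the factor $e^{-N\Psi}$ in the initial data (and in the stationary barrier $\eta$) is harmless and the matching of the barriers at $\partial\Omega$ is unchanged.

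The first step is the transformation identity. For $\varphi(x,t) = f(x,t)e^{-N\Theta(x)}$, the same cancellations as in the computations surrounding \eqref{decomposition} and \eqref{decomp_2} (now with $\Theta$ in place of $\Phi$) eliminate every term containing $N^2$, $D^2\Theta$, or $(\nabla\Theta)^2$, leaving
\begin{align*}
	\varphi_t - \Delta\varphi - N\nabla\cdot(\varphi\,\nabla\Theta) = e^{-N\Theta}\bigl(f_t - \Delta f + N\nabla f\cdot\nabla\Phi + N\nabla f\cdot\nabla\Psi\bigr).
\end{align*}
Thus the only new term relative to the unperturbed case is $N\nabla f\cdot\nabla\Psi$; in particular no second derivatives of $\Psi$ appear, which is exactly why the hypothesis needs to control only $|\nabla\Psi|$.

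Next I would keep the profile of Section~\ref{sec:2d} specialized to $A\equiv Id$, i.e. $f(x,t) = u_\epsilon(S(x,t),t) + \alpha\bigl[(d(x)-\epsilon)^3 + \epsilon^3\bigr]/\epsilon + \alpha\epsilon\,d(x)$, and use that the radial part of $\nabla f$ is the nonnegative multiple $\bigl(3\alpha(d-\epsilon)^2/\epsilon + \alpha\epsilon\bigr)\nabla d$ of $\nabla\Phi = 3d^2\nabla d$, while the remaining part $\nabla u_\epsilon\,DS$ is bounded and, dotted with $\nabla d$, is $O(d)$ by Lemma~\ref{lemma:Slemma}. Since $|\nabla\Psi| < d^3 = \tfrac{d}{3}\,|\nabla\Phi| < \tfrac13\,|\nabla\Phi|$ on $\Omega'$ (using $d(x,\Omega) < 1$ there), a term-by-term comparison gives
\begin{align*}
	\nabla f\cdot(\nabla\Phi + \nabla\Psi) \ge \tfrac23\,\nabla f\cdot\nabla\Phi - C(u,\Omega)\,d^3 \ge 3d^2\Bigl(\tfrac23\bigl[3\alpha(d-\epsilon)^2/\epsilon + \alpha\epsilon\bigr] - C(u,\Omega)\,d\Bigr),
\end{align*}
which, after absorbing the $O(d^3)$ error into the $-Cd$ term exactly as in \eqref{eq:Mfbd}, is still positive and of order $N\epsilon\alpha\,d^2$ once $\alpha$ is taken large. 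With this, the two case analyses ($-\epsilon < d < \epsilon/2$ versus $\epsilon/2 \le d \le d_0$), the choice $N \sim \epsilon^{-3}$, the patching to $\eta = 2\norm{u_\epsilon}_{L^\infty(\partial\Omega\times[0,\infty))}e^{-N\Theta}$ near $\partial\Omega'$, and the ordering at $t=0$ all go through with only harmless changes to constants; the symmetric construction yields the subsolution, and the comparison principle gives $u_{-\epsilon} \le v \le u_\epsilon$ on $\Omega\times[0,T]$, hence the stated $O((T+1)N^{-1/3})$ bound.

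The main obstacle is precisely the verification that $N\nabla f\cdot\nabla\Psi$ does not destroy positivity in the outer region $\epsilon/2 \le d \le d_0$, where there is no negative $\Delta f$ to supply a margin and the drift term alone must carry the inequality: one must see that $\nabla\Phi$ and $\nabla\Psi$ are genuinely comparable with $\nabla\Psi$ strictly subdominant (ratio $< d/3 < 1/3$, using $d(x,\Omega)<1$) and aligned with the radial part of $\nabla f$, so that the perturbed drift stays positive and of the correct order $N\epsilon d^2$. A secondary but routine check is that the orderings at the parabolic boundary survive replacing $e^{-N\Phi}$ by $e^{-N\Theta}$, which follows at once from $|\Psi| \le \tfrac14 d^4$ after the normalization $\Psi|_{\partial\Omega}=0$.
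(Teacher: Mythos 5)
Your proposal is correct and follows the same strategy the paper uses: apply the transform $\varphi = f\,e^{-N(\Phi+\Psi)}$, note that the only new term in the reduced inequality is $N\nabla f\cdot\nabla\Psi$, and use that $|\nabla\Psi| < d^3 = \tfrac{d}{3}|\nabla\Phi|$ together with $d<1$ on $\Omega'$ to show the perturbed drift term $N\nabla f\cdot(\nabla\Phi+\nabla\Psi)$ still has the positive lower bound of order $N\alpha\epsilon d^2$ needed in the outer region. One small remark: for $A\equiv Id$ one actually has $DS\,\nabla d \equiv 0$ exactly (since $D^2 d\,\nabla d = 0$ and $|\nabla d|=1$), so the $\nabla u_\epsilon^T DS\cdot\nabla\Phi$ term vanishes identically rather than merely being $O(d)$; your appeal to Lemma~\ref{lemma:Slemma} is a slightly weaker but still adequate substitute.
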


\begin{proof}
The proof is essentially the same as in Theorem~\ref{thm:neumannapprox2dthm}, with the main difference being we apply the transform
\begin{align*}
	\varphi(x,t) = f(x,t) e^{-N(\Phi(x) + \Psi(x))}.
\end{align*}
Then using the fact that $|\nabla \Psi|$ is an order smaller than $|\nabla \Phi|$ makes it so that the extra terms in the transform are not problematic in the extension process.
 \end{proof}

\begin{appendix}
\section{ Appendix: Constructing $A(x,t)$} 
\label{sec:appendixA}
\begin{lemma}
Consider a $C^k$ domain $\Omega$ and smooth vector field $\vec{v}(x,t)$ satisfying $\vec{v}(x,t) \cdot \vec \nu(x) \ge c_0$ for all $x$ and $t$, where $\vec \nu$ is the outer unit normal to $\Omega$ at $x$.  Then there exists a symmetric $C^k$ matrix $A(x,t)$ satisfying the property that
\begin{align*}
	A(x,t) \vec \nu(x) = \vec{v}(x,t) \mbox{ for all } x \in \partial \Omega.
\end{align*}
Moreover, $A$ satisfies the ellipticity condition \eqref{eqn:strictell}. 
\end{lemma}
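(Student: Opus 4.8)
The plan is to build $A$ by an explicit algebraic formula in a tubular neighborhood of $\partial\Omega$, from the outer normal field and the vector field $\vec v$, and then patch that formula to a large constant multiple of the identity away from $\partial\Omega$. For the construction near the boundary, recall that since $\Omega$ is $C^k$ the signed distance function to $\partial\Omega$ is $C^k$ in a tubular neighborhood $U$ of $\partial\Omega$, and its gradient gives a unit vector field $\vec N(x)$ on $U$ with $\vec N|_{\partial\Omega}=\vec\nu$. Extend $\vec v$, if it is only prescribed on $\partial\Omega$, to a vector field on $U\times[0,\infty)$ of the same regularity, e.g.\ by composing the boundary data with the nearest-point projection onto $\partial\Omega$. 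Shrinking $U$, we may assume $\vec v(x,t)\cdot\vec N(x)\ge c_0/2$ and $|\vec v(x,t)|\le V$ on $U\times[0,\infty)$, and we decompose $\vec v=v_\nu\vec N+\vec v_T$ with $v_\nu:=\vec v\cdot\vec N\ge c_0/2$ and $\vec v_T:=\vec v-v_\nu\vec N$, which is orthogonal to $\vec N$.

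For a constant $K$ to be chosen, set
\[
A_U(x,t):=v_\nu\,\vec N\otimes\vec N+\bigl(\vec v_T\otimes\vec N+\vec N\otimes\vec v_T\bigr)+K\,(Id-\vec N\otimes\vec N).
\]
This is symmetric and inherits the regularity of $\vec N$ and $\vec v$, and since $\vec v_T\cdot\vec N=0$ one checks directly that $A_U\vec N=v_\nu\vec N+\vec v_T=\vec v$; in particular $A_U\vec\nu=\vec v$ on $\partial\Omega$. For the ellipticity, fix $(x,t)$ and diagonalize in the orthonormal frame $\{\vec N,\ \vec v_T/|\vec v_T|,\ e_3,\dots,e_n\}$: there $A_U$ is block diagonal, with the $(n-2)\times(n-2)$ block equal to $K\,Id$ and the remaining $2\times 2$ block having diagonal entries $v_\nu$ and $K$ and off-diagonal entry $|\vec v_T|$. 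That block has trace $v_\nu+K>0$ and determinant $v_\nu K-|\vec v_T|^2$, which is positive as soon as $K>2V^2/c_0$; choosing such a $K$, all eigenvalues of $A_U$ then lie in $[\lambda_0,\,V+K]$ for some $\lambda_0=\lambda_0(c_0,V,K)>0$, so $\lambda_0\,Id\le A_U\le\Lambda\,Id$ on $U\times[0,\infty)$ with $\Lambda:=V+K$.

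To obtain $A$ on all of $\R^n$, pick a smooth cutoff $\chi$ which equals $1$ on a smaller tubular neighborhood $U'$ of $\partial\Omega$ and is supported in $U$, and set $A:=\chi A_U+(1-\chi)\,K\,Id$. Since $\lambda_0\le K\le\Lambda$ and $\lambda_0 Id\le A_U\le\Lambda Id$ on the support of $\chi$, each value of $A$ is a convex combination of symmetric matrices in $\{M:\lambda_0 Id\le M\le\Lambda Id\}$, a convex set, hence $A$ is symmetric and globally uniformly elliptic; it has the stated regularity, equals $K\,Id$ outside $U$, and equals $A_U$ on $U'$, so $A\vec\nu=\vec v$ on $\partial\Omega$. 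If the normalization $\lambda=1$ in \eqref{eqn:strictell} is wanted, one uses that the oblique condition $Du\cdot\vec v=0$ is unchanged when $\vec v$, and hence $A$, is multiplied by a positive constant; otherwise one simply records that $A$ satisfies \eqref{elliptic} with $\lambda,\Lambda$ depending on $c_0$, $V$ and $\Omega$.

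The only real issue is the ellipticity of the boundary-adapted matrix $A_U$: the symmetrized rank-two term $\vec v_T\otimes\vec N+\vec N\otimes\vec v_T$ is indefinite, and it is precisely the strict obliqueness \eqref{oblique}, in the form $v_\nu\ge c_0/2>0$, that lets a large enough $K$ (quantitatively $K>2V^2/c_0$) keep the $2\times2$ block, and therefore $A_U$, positive definite with spectrum bounded away from $0$ and from $\infty$. The remaining steps are routine: the partition-of-unity patching preserves ellipticity by convexity of the spectral constraint, and the only caveat is a possible one-derivative loss of regularity coming from building $\vec N$ out of the distance function, which is immaterial in the applications where $\Omega$ and $\vec v$ are smooth.
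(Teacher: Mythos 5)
Your construction is, at its algebraic core, the same as the paper's: if you expand the paper's $A = S B S^T$ (with $S$ the orthonormal frame matrix and $B$ the arrowhead matrix) you get exactly your tensor formula $v_\nu\,\vec N\otimes\vec N + (\vec v_T\otimes\vec N+\vec N\otimes\vec v_T) + K(\mathrm{Id}-\vec N\otimes\vec N)$, with the paper normalizing $v_\nu=1$ and writing $c$ for $K$. That said, your presentation has two genuine advantages. First, the coordinate-free formula sidesteps the paper's appeal to a globally $C^k$ orthonormal frame $\{v_1,\dots,v_{n-1}\}$ of $T\partial\Omega$, which need not exist for topological reasons (e.g.\ $\partial\Omega\cong S^2$); you never need a tangent frame, only $\vec N$ and $\vec v_T$, and the frame you use is purely for the pointwise spectral computation. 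Second, the paper's proof constructs and bounds $A$ only on $\partial\Omega$ and never addresses the extension to $\R^n\times[0,\infty)$ promised in the statement; your cutoff-and-convexity patching ($\chi A_U+(1-\chi)K\,\mathrm{Id}$, using convexity of $\{M:\lambda_0 I\le M\le\Lambda I\}$) supplies this cleanly. Your $2\times2$ trace/determinant check is also a tidier and less error-prone route to positivity than the paper's cofactor expansion of the full arrowhead matrix. The one caveat you flag yourself is real and shared with the paper: $\vec N$ (or $\vec\nu$) built from a $C^k$ boundary is generically only $C^{k-1}$, so strictly one should either say ``$C^{k-1}$'' or assume $\Omega\in C^{k+1}$; this is immaterial in the paper's applications where everything is smooth.
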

\begin{proof}
We start by considering an orthonormal basis of $T_x(\partial \Omega)$ written as $\{v_1(x), \dots, v_{n-1}(x)\}$, where the $v_i$ are $C^k$ in $x$.  After a rescaling $\vec{v}$ can be written as 
\begin{align*}
	\vec{v}(x,t) = \vec \nu(x) + \sum_{i = 1}^{n-1} \alpha_i(x,t) v_i(x),
\end{align*}
where the $\alpha_i$ are $C^k$ and bounded.  Then we define $S$ as
\begin{align*}
	S(x) = (\vec \nu(x), v_1(x), \dots, v_{n-1}(x)).
\end{align*}
Now we consider $A$ of the form $S B S^{-1}$, where
\begin{align*}
B &= \begin{pmatrix}
1 & \alpha_1 & \alpha_2 & \dots & \alpha_{n-1} \\
\alpha_1 & c & 0 & \dots & 0 \\
\alpha_2 & 0 & c & \dots & 0 \\
\vdots & 0& & \ddots & 0 \\
\alpha_{n-1} & 0 & 0 & \dots & c \\
\end{pmatrix} .
\end{align*}
Here $c$ is a constant to be chosen large.  We claim that if $c$ is large enough, all the eigenvalues of $B$ and hence $A$ will be uniformly positive.  This is because cofactor expansion gives that
\begin{align*}
	\det(B - \lambda I ) &= (1-\lambda) (c-\lambda)^{n-1} + \sum_{i = 1}^{n-1} (-1)^i \alpha_i^2 (c-\lambda)^{n-2} \\
	&= (c-\lambda)^{n-2} \left[ (1-\lambda) (c-\lambda) + \sum_{i = 1}^{n-1} (-1)^i \alpha_i^2\right]\\
	&= (c-\lambda)^{n-2} \left[ \lambda^2 - 2 \lambda c + c+ \sum_{i = 1}^{n-1} (-1)^i \alpha_i^2\right].
\end{align*}
Then this has eigenvalues $\lambda = c$ and writing $\beta := \sum_{i = 1}^{n-1} (-1)^i \alpha_i^2$, 
\begin{align*}
	\lambda &= \frac{ 2c \pm \sqrt{ 4c^2 - 4(c + \beta)}}{2}.
\end{align*}
Thus taking $c$ large with respect to $\beta$ ensures that all eigenvalues can be bounded by
\begin{align*}
	\lambda_0 < \lambda_i(x) < \Lambda
\end{align*}
for all $i$ and $x \in \partial \Omega$.  Thus $A$ satisfies the ellipticity condition, is symmetric, and is smooth.
	\end{proof}

\section{ Appendix: Remarks on the distance functions}\label{appendixa}
In this section we prove Lemma~\ref{lem:omegalemma}.  We keep the notation $\gamma$ for the lower bound on the radii of the exterior and interior balls of $\partial \Omega$, and $\Lambda$ is the constant so that $I \le A(x,t) \le \Lambda I$ uniformly. 

\begin{lemma}
Suppose that 
\begin{align}\label{eqn:omegaconditionA}
	d(\Omega', \Omega) <\min\left[\gamma, \frac{\gamma}{\sqrt{\Lambda^2 - 1}}\left(\Lambda - \sqrt{\Lambda^2 - 1}\right) \right],
\end{align}
and $A$ is $C^2$.  Then in $\Omega'$, the distance function $d(x,\Omega)$ is $C^2$, $S(x,t)$ is well defined, and $\tilde d \lesssim d$. Further, for $x \in \Omega' \backslash \Omega$, $A \nabla d |_x \notin T_{S(x,t)} \partial \Omega$.  Lastly, $\tilde d$ is also $C^2$, and hence $S$ is $C^2$ as well, with 
\begin{align}\label{eqn:nabladA}
\nabla \tilde d(x,t)^T &= \frac{ \nabla d(S(x,t))^T \left[I - \tilde d(x,t) \nabla A(x,t) \nabla d(x) - \tilde d(x,t) A(x,t) D^2d(x)\right]}{\nabla d(S(x,t)^T) A(x,t) \nabla d(x)}.
\end{align}
\end{lemma}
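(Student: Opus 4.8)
The plan is to reduce everything to the elementary geometry of the ray $r\mapsto x-r\,A(x,t)\nabla d(x)$ ($r\ge0$), the classical regularity theory of distance functions, and one application of the implicit function theorem. First I would recall the standard facts: since $\Omega$ is a bounded $C^2$ domain it satisfies uniform interior and exterior ball conditions with some radius $\gamma>0$, so $d(\cdot,\Omega)$ is $C^2$ on the collar $\{0<d(\cdot,\Omega)<\gamma\}$, which by \eqref{eqn:omegaconditionA} contains $\Omega'\setminus\overline\Omega$; there $|\nabla d|\equiv1$, and writing $p:=\pi(x)\in\partial\Omega$ for the unique nearest point of $\partial\Omega$ to $x$ one has $\nabla d(x)=\vec\nu(p)$ and $x=p+d(x)\vec\nu(p)$. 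The extension $d$ fixed earlier is $C^2$ on $\R^n$ and agrees with this on the collar.

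For the well-definedness of $S$ and the bound $\tilde d\lesssim d$, fix $x\in\Omega'\setminus\overline\Omega$ and $t$. Since $I\le A\le\Lambda I$, the angle $\theta$ between $A\nabla d(x)$ and $\nabla d(x)=\vec\nu(p)$ satisfies $\cos\theta=\nabla d(x)^TA\nabla d(x)/|A\nabla d(x)|\ge\Lambda^{-1}$, so $\sin\theta\le\sqrt{\Lambda^2-1}/\Lambda$. Let $q:=p-\gamma\vec\nu(p)$ be the center of the interior ball $B_\gamma(q)\subseteq\Omega$ tangent at $p$; then $x,p,q$ are colinear with $|x-q|=d(x)+\gamma$, and a ray from $x$ whose direction makes angle $\theta$ with $q-x$ meets $B_\gamma(q)$ precisely when $(d(x)+\gamma)\sin\theta<\gamma$. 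The bound \eqref{eqn:omegaconditionA} is exactly what forces $\sqrt{\Lambda^2-1}/\Lambda<\gamma/(d(x)+\gamma)$ for all $x\in\Omega'$ (the term $\gamma$ in the minimum simultaneously keeps $x$ in the collar where $d$ is $C^2$). Since $B_\gamma(q)\subseteq\Omega$ while $x\notin\overline\Omega$, the ray crosses $\partial\Omega$ before entering $B_\gamma(q)$; letting $S(x,t)$ be the first such crossing and $\tilde d(x,t)|A\nabla d(x)|$ its arclength, the near intersection of the ray with $\partial B_\gamma(q)$ lies at distance $(d(x)+\gamma)\cos\theta-\sqrt{\gamma^2-(d(x)+\gamma)^2\sin^2\theta}\le d(x)/\cos\theta+O(d(x)^2)$ from $x$, which bounds the arclength to $S$; as $|A\nabla d(x)|\ge1$ this gives $\tilde d\le C(\Lambda)\,d$, i.e.\ $\tilde d\lesssim d$. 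For transversality, $|S(x,t)-p|\le|S-x|+|x-p|\lesssim d(x)$, and the Gauss map of $\partial\Omega$ is $O(1/\gamma)$-Lipschitz (bounded curvature), so $|\vec\nu(S(x,t))-\vec\nu(p)|\lesssim d(x)$; hence
\begin{align*}
\nabla d(S(x,t))^TA(x,t)\nabla d(x)=\vec\nu(S)^TA\vec\nu(p)\ge\vec\nu(p)^TA\vec\nu(p)-\Lambda|\vec\nu(S)-\vec\nu(p)|\ge1-C(\gamma,\Lambda)\,d(x)>0
\end{align*}
once $d_0$ is small in terms of $\gamma,\Lambda$ (which \eqref{eqn:omegaconditionA} provides, shrinking the threshold by a universal factor if necessary), that is, $A\nabla d|_x\notin T_{S(x,t)}\partial\Omega$.

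Finally, for the $C^2$ regularity of $\tilde d$ (hence of $S=x-\tilde d\,A\nabla d$) and the formula \eqref{eqn:nabladA}, let $\rho$ be the signed distance to $\partial\Omega$ (negative in $\Omega$), which is $C^2$ near $\partial\Omega$ with $\nabla\rho=\vec\nu=\nabla d$ on $\partial\Omega$. Near the relevant point $\tilde d$ is characterized by $G(x,t,s):=\rho\big(x-s\,A(x,t)\nabla d(x)\big)=0$, and
\begin{align*}
\partial_sG=-\nabla\rho(S)^TA(x,t)\nabla d(x)\ne0
\end{align*}
by the transversality just proved, so the implicit function theorem makes $\tilde d$ as regular as the data; differentiating $G(x,t,\tilde d(x,t))\equiv0$ in $x$, together with $\nabla_x\big(x-s\,A(x,t)\nabla d(x)\big)=I-s\big(\nabla A(x,t)\nabla d(x)+A(x,t)D^2d(x)\big)$, gives
\begin{align*}
\nabla\tilde d(x,t)^T=-\frac{\nabla_xG}{\partial_sG}=\frac{\nabla d(S(x,t))^T\big[I-\tilde d(x,t)\nabla A(x,t)\nabla d(x)-\tilde d(x,t)A(x,t)D^2d(x)\big]}{\nabla d(S(x,t))^TA(x,t)\nabla d(x)}.
\end{align*}
I expect the geometric step of the middle paragraph to be the main obstacle: the trigonometry is elementary, but one must verify that the implied constants in $\tilde d\lesssim d$ and in $|\vec\nu(S)-\vec\nu(p)|\lesssim d$ depend only on $\gamma$ and $\Lambda$, so that the single threshold in \eqref{eqn:omegaconditionA} (up to a harmless universal shrinking) secures well-definedness of $S$, the estimate $\tilde d\lesssim d$, and transversality simultaneously; once transversality is available, the implicit-function-theorem computation is routine.
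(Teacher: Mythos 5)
Your treatment of the well-definedness of $S$, the estimate $\tilde d\lesssim d$, and the implicit-function-theorem step matches the paper's appendix essentially line-for-line: the angle bound $\cos\theta\ge\Lambda^{-1}$, the interior ball $B_\gamma$ tangent at the nearest boundary point, the algebra showing $(d+\gamma)\sin\theta<\gamma$ is exactly \eqref{eqn:omegaconditionA}, and the differentiation of the implicit relation all coincide. Where you genuinely diverge is the transversality claim $A\nabla d|_x\notin T_{S(x,t)}\partial\Omega$. You argue perturbatively: $|S(x,t)-p|\lesssim d(x)$, the Gauss map of $\partial\Omega$ is $1/\gamma$-Lipschitz, hence $\nabla d(S)^TA\nabla d(x)\ge \vec\nu(p)^TA\vec\nu(p)-\Lambda|\vec\nu(S)-\vec\nu(p)|\ge 1-C(\gamma,\Lambda)d(x)>0$ for $d$ small, and you explicitly note this may require shrinking the threshold in \eqref{eqn:omegaconditionA} by a universal factor. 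The paper instead gives a sharp geometric contradiction: if $A\nabla d$ were tangent to $\partial\Omega$ at $S$, there is an interior ball $B_\gamma(w)$ perpendicular to the ray at $S$, so $d(x,w)^2=d(x,S)^2+\gamma^2$; using $d(x,S)\le d'<d(d+2\gamma)\Lambda/(d+\gamma)$ this forces $d(x,w)<d+\gamma=d(x,z)$, contradicting that $y$ is the nearest boundary point, and the algebra collapses to the very same inequality $d(d+2\gamma)\Lambda^2<(d+\gamma)^2$ already used for well-definedness --- no extra shrinking. Your route is softer and more portable but only proves the lemma under a (possibly) smaller threshold; the paper's route is tied to the interior-ball geometry but gets the stated constant exactly. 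Since the constant used in the body of the paper, \eqref{eqn:omegabd}, already carries a harmless factor of $\tfrac12$, the weaker version you prove would suffice for the application, but it does not quite establish the lemma as literally stated.
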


\begin{proof}
The regularity of the distance function is shown in \cite{gilbtrud}. For the second part, consider a point $x \in \Omega' \backslash \Omega$.  Then since $x$ is contained in an exterior ball of $\Omega$, we must have that there is a unique nearest point $y \in \partial \Omega$ at distance $d$, and at $y$, there is an interior ball $B_\gamma(z)$.  We show that starting at $x$, going in direction $-A \nabla d$ we will wind up in this interior ball, which will show that $S(x,t)$ is well-defined.  The worst case scenario is when the angle between $\nabla d$ and $A \nabla d$ is maximal, and we note we can get an upper bound since it satisfies 
\begin{align*}
	\cos \theta = \frac{\nabla d^T A \nabla d}{|\nabla d | |A \nabla d|} \ge \frac{1}{\Lambda}.
\end{align*}

We wish to show that a ray starting from $x$, deflected by a maximal $\theta$, will hit $B_\gamma(z)$ provided $d$ is small enough.  Projecting into the plane containing $\nabla d$ and $A \nabla d$, we can consider this in two dimensions; see Figure~\ref{fig:d'}.  Solving for the intersection, we find that the distance from where we hit $B_\gamma(z)$ is given by 
\begin{align}\label{eqn:d'}
	d' = \frac{ d(d+2\gamma) \Lambda}{d+\gamma + \sqrt{(d+\gamma)^2 - d(d+2\gamma)\Lambda^2}} \approx C(\gamma, \Lambda) d
\end{align}
for $d$ small.  Then in particular we hit if
\begin{align}\label{eqn:dbdd}
	(d+\gamma)^2 - d(d+2\gamma)\Lambda^2 \ge 0,
\end{align}
and it can be checked this is equivalent to requiring 
\begin{align*}
	d < \frac{\gamma}{\sqrt{\Lambda^2 - 1}}\left(\Lambda - \sqrt{\Lambda^2 -1}\right).
\end{align*}
Thus $S(x,t)$ is well-defined if we hit $B_\gamma(z)$, which is guaranteed if $\Omega'$ satisfies \eqref{eqn:omegaconditionA}. Also, since $\tilde d \le d' / |A \nabla d|$, we find $\tilde d \lesssim C(A, \Omega)$.

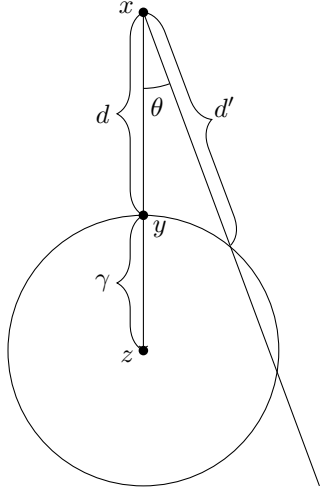
\begin{figure}
\centering
\begin{tikzpicture}[scale = 1.8]
\coordinate (x) at (0, 1.5);
\coordinate (z) at (0, -1);
\coordinate (y) at (0, 0);
\fill (x) circle [radius = 1pt];
\fill (z) circle [radius = 1pt];
\fill (y) circle [radius = 1pt];
\draw (x) node [left, yshift = 2pt] {$x$};
\draw (y) node [right, xshift = 0pt, yshift = -5pt] {$y$};
\draw (z) node [left, yshift = -2pt] {$z$};
\draw[->] (x) -- (z);
\draw (z) circle (1);
\coordinate (e) at (1.3,-2);
\draw[-] (x) -- (e);
\draw (.1, .8) node {{$\theta$}};
\tikzAngleOfLine(x)(e){\AngleStart}
\tikzAngleOfLine(x)(z){\AngleEnd}
\draw[-] (x)+(\AngleStart:16pt) arc (\AngleStart: \AngleEnd: 16pt);

\draw [decorate, decoration={brace, amplitude=10pt}] (x) -- (.65, -.22);
\draw (.6,.8) node {{$d'$}};
\draw [decorate, decoration={brace, amplitude=10pt}] (y) -- (x);
\draw (-.3,.75) node {{$d$}};
\draw [decorate, decoration={brace, amplitude=10pt}] (z) -- (y);
\draw (-.3,-.5) node {{$\gamma$}};

\end{tikzpicture}
\caption{\label{fig:d'}Calculating an upper bound on $\tilde d$}
\end{figure}

Next, we check that $A \nabla d |_x \notin T_{S(x,t)} \partial \Omega$.  Consider the line from $x$ along $A \nabla d$ to where it hits $B_\gamma(z)$.  If $A \nabla d \in T_{S(x,t)} \partial \Omega$, then there would be an interior ball $B_\gamma(w)$ that is perpendicular to $A \nabla d$ at $S(x,t)$.  We claim $d(w,x) < d(z,x)$, which would contradict that $y$ is the nearest point to $x$ since then $d(x, \Omega) \le d(w,x) - \gamma < d(z,x) - \gamma = d(x,y)$.  This claim follows by showing that
\begin{align*}
	d(x,w)^2 = d(x,S(x,t))^2 + \gamma^2 < (d+\gamma)^2 = d(x,z)^2.
\end{align*}
But since $d(x,S(x,t)) \le d' < d(d+2\gamma)\Lambda / (d+\gamma)$ by \eqref{eqn:d'}, this is ensured if 
\begin{align*}
	\frac{d^2(d+2\gamma)^2 \Lambda^2}{(d+\gamma)^2} < (d+\gamma)^2 - \gamma^2 = d(d+2\gamma).
\end{align*}
Rearranging this yields that we need
\begin{align*}
	d(d+2\gamma) \Lambda^2 < (d+\gamma)^2,
\end{align*}
which is the same as \eqref{eqn:dbdd}, and hence true by our assumptions on the maximal size of $d$.

Finally, we use the implicit function theorem to show that $\tilde d$ is continuous.  This is because it can be given implicitly as the solution $\lambda$ to
\begin{align*}
	f(x, t, \lambda) &= d(x - \lambda A(x,t) \nabla d(x, \Omega), \Omega) = 0.
\end{align*}
Then for a given $(x_0,t_0)$, a minimal solution $\lambda$ must exist since the solution space is non-empty and everything is continuous.  Then we compute that
\begin{align*}
	\der{f}{\lambda} &= -\nabla d(x_0 - \lambda A(x_0,t_0) \nabla d(x_0))^T A(x_0,t_0) \nabla d(x_0).
\end{align*}
It can be shown that $\eqref{eqn:dbdd}$ implies that at the first point of contact to $\Omega$ along $A \nabla d$, the angle to $\Omega$ is not tangent, and hence $\der{f}{\lambda} \ne 0$.  Thus we find an implicit solution exists as we vary around $(x,t)$ around $(x_0,t_0)$, there is locally a solution $\tilde d(x,t)$, and since everything else in $f$ is $C^2$, so is $\tilde d$.  Then \eqref{eqn:nabladA} follows from the implicit function as well.

\end{proof}
\end{appendix}
\bibliographystyle{amsplain}

\bibliography{ParabolicNeumannDriftApproximation}
\end{document}